\documentclass[11pt]{amsart}
\usepackage[dvipsnames]{xcolor}
\usepackage{amsfonts,amssymb,amsmath,amscd,amstext}
\usepackage{mathtools}
\usepackage[utf8]{inputenc}
\usepackage{graphicx}
\usepackage{mathrsfs}
\usepackage{changes}
\usepackage{comment}
\usepackage{aligned-overset}
\usepackage{mathdots}
\usepackage{enumerate}
\usepackage[a4paper,top=2cm,bottom=2cm,left=1.4cm,right=1.4cm]{geometry}
\newcommand{\hhh}{{\mathcal{H}}}
\newcommand{\rr}{\mathbb R}
\newcommand{\hh}{{\mathbb{H}}} 
\newcommand{\bb}{{\bf {b}}} 
\renewcommand{\rho}{\varrho} 
\newcommand{\Om}{\Omega}
\newcommand{\eps}{\varepsilon}
\DeclarePairedDelimiter{\abs}{\lvert}{\rvert}
\DeclarePairedDelimiter{\norm}{\lVert}{\rVert}
\DeclareMathOperator{\divv}{div}

\DeclareMathOperator{\spann}{span}
\usepackage[colorlinks=true,linkcolor=teal,citecolor=purple]{hyperref}
\usepackage[noabbrev,capitalize,nameinlink]{cleveref}
\theoremstyle{plain}
\newtheorem{theorem}{Theorem}[section]
\newtheorem{proposition}[theorem]{Proposition}
\newtheorem{lemma}[theorem]{Lemma}

\theoremstyle{definition}

\newtheorem{example}[theorem]{Example}
\theoremstyle{remark}
\newtheorem{remark}[theorem]{Remark}
\crefname{theorem}{Theorem}{Theorems}
\crefname{lemma}{Lemma}{Lemmas}
\crefname{proposition}{Proposition}{Propositions}
\crefname{corollary}{Corollary}{Corollaries}
\crefname{definition}{Definition}{Definitions}
\crefname{remark}{Remark}{Remarks}
\crefname{example}{Example}{Examples}
\title[Transport and flow for horizontal Sobolev contact velocities in Carnot groups]{Transport and flow for horizontal Sobolev contact velocities in Carnot groups}
\author[G.~Somma]{Gianluca Somma}
\address[G.~Somma]{Dipartimento di Matematica ``Tullio Levi-Civita'', Università degli Studi di Padova \protect\newline
\indent Via Trieste 63, 35131 Padova (PD), Italy}
\email{gianluca DOT somm AT phd DOT unipd DOT it}
\author[S.~Verzellesi]{Simone Verzellesi}
\address[S.~Verzellesi]{Department of Decision Sciences and BIDSA, Bocconi University \protect\newline
\indent Via R\"ontgen 1, 20136, Milano (MI), Italy}
\email{simone DOT verzellesi AT unibocconi DOT it}
\author[D.~Vittone]{Davide Vittone}
\address[D.~Vittone]{Dipartimento di Matematica ``Tullio Levi-Civita'', Università degli Studi di Padova \protect\newline
\indent Via Trieste 63, 35131 Padova (PD), Italy}
\email{davide DOT vittone AT unipd DOT it}
\date{\today}

\subjclass{35Q49, 35R03}
\keywords{Transport equation; Lagrangian flow; renormalization property; contact vector fields; Carnot groups}
\thanks{\textit{Memberships and funding information.} The authors are members of the Istituto Nazionale di Alta Matematica (INdAM), Gruppo Nazionale per l'Analisi Matematica, la Probabilità e le loro Applicazioni (GNAMPA). The authors are supported by the University of Padova, and received funding through INdAM-GNAMPA 2026 Project \emph{Variational, Geometric, and Analytic Perspectives on Regularity}, CUP E53C25002010001. D.~Vittone is supported by INdAM project {\em VAC\&GMT}}
\numberwithin{equation}{section}
\begin{document}
\begin{abstract}
We establish new well-posedness results for transport and flow equations driven by contact vector fields on Carnot groups. The velocity fields are assumed to have horizontal Sobolev regularity, namely Sobolev regularity only along the horizontal directions determined by the stratified geometry of the group. In the broader sub-Riemannian setting, results of this type were previously known only for Heisenberg groups. Our proof relies on the theory of renormalized solutions as originally introduced by DiPerna and Lions in the Euclidean setting.
%
%
%
%
\end{abstract}
\maketitle

\section{Introduction}
In this paper, we establish new well-posedness results for the \emph{transport equation}
\begin{equation} \label{eq_intro_transport_equation}
\begin{cases}
\displaystyle{\frac{\partial u}{\partial \tau} - \left\langle \bb,\nabla u \right\rangle + cu = 0} & \qquad\text{in $(0,\bar{\tau}) \times \mathbb{G}$}\\
u(0,\cdot)=u_0 & \qquad\text{in $\mathbb{G}$}
\end{cases}
\end{equation}
and for the \emph{flow equation}
\begin{equation}
\label{flow_equation_intro}
\begin{cases}
\displaystyle{\Dot\Phi(\tau,p) =\bb(\tau,\Phi(\tau,p))} & \qquad\text{in $(0,\bar{\tau}) \times \mathbb{G}$}\\
\Phi(0,p)=p & \qquad\text{in $\mathbb{G}$}
\end{cases}
\end{equation}
in the non-Euclidean setting of sub-Riemannian \emph{Carnot groups} $\mathbb G$. In addition to natural growth conditions and a uniform control on its spatial divergence, the velocity field $\bb=\bb(\tau,p)$ has the following properties:
\begin{itemize}
    \item [(i)] \emph{horizontal} Sobolev regularity, i.e.,~Sobolev regularity along a (possibly small) family of directions;
    \item [(ii)] a precise \emph{contact structure}, compatible with the above family of directions.
\end{itemize} 
The original motivation for this theory is to address the Lagrangian problem \eqref{flow_equation_intro} beyond the classical Cauchy-Lipschitz framework (see e.g.~\cite[Chapter 8]{MR2401600}). In the Euclidean setting, this problem was solved in the influential paper \cite{MR1022305} for Sobolev velocity fields, and was later extended to the $BV$ setting in the breakthrough work \cite{MR2096794}. The principle underlying the results of \cite{MR2096794,MR1022305} -- formalized in great generality in \cite{MR2409676,MR3283066} -- is to obtain information at the Lagrangian level from the \emph{Eulerian} viewpoint of \eqref{eq_intro_transport_equation}. More precisely, well-posedness for distributional solutions to  \eqref{eq_intro_transport_equation} yields well-posedness for solutions to \eqref{flow_equation_intro}, for instance in the sense of \emph{regular Lagrangian flows} as introduced in \cite{MR2096794}. 
This Eulerian approach has since led to many further results under suitable regularity and structural assumptions on $\bb$ (see e.g.~\cite{MR4071413,MR1411988,MR2044334,MR2124585}). Purely Lagrangian methods have also been developed (see e.g.~\cite{MR2369485,MR2737390,MR4242824}).

\medskip
A major step toward extending the theory to non-Euclidean geometries was taken in \cite{MR3265963}, which established analogous results for Sobolev vector fields on a broad class of metric measure spaces. Since Carnot groups carry a natural metric measure structure, it is natural to ask whether well-posedness results in this setting follow directly from the theory developed in \cite{MR3265963}. As explained by L.~Ambrosio and the authors in \cite[Section 6]{dpl1}, this is not the case even for \emph{Heisenberg groups}, the simplest non-abelian examples of Carnot groups. Indeed, although Heisenberg groups satisfy the structural requirements imposed in \cite{MR3265963}, the corresponding assumptions on the vector field are too restrictive, reducing the applicability of the theory to the trivial case of the null vector field. This limitation was overcome in \cite{dpl1}, where well-posedness was established for contact vector fields with horizontal Sobolev regularity on Heisenberg groups. The present paper extends these results to the more general setting of Carnot groups.

\medskip
A Carnot group $\mathbb G$ is a connected and simply connected Lie group whose Lie algebra $\mathfrak g$ of left-invariant vector fields admits the non-trivial \emph{stratification}
\begin{equation*}
\mathfrak{g}= V_1 \oplus \dots \oplus V_s, \qquad V_{i+1} = [V_1,V_i] \quad \text{for $i=1,\dots,s-1$,} \qquad [V_1,V_s]=\{0\}.
\end{equation*}
The number $s$ of \emph{layers} in the above decomposition is known as \emph{step} of $\mathbb G$. Fixing a basis of $\mathfrak g$,
\begin{equation}\label{basis_intro_dhdhdhd}
    \left\{X^i_\alpha\right\}_{i,\alpha},\qquad i=1,\dots,s,\qquad\alpha=1,\dots,\dim V_i,
\end{equation}
 \emph{adapted} to the above stratification, uniquely determines a left-invariant Riemannian metric $\langle\cdot,\cdot\rangle$ which makes \eqref{basis_intro_dhdhdhd} orthonormal. Moreover, exponential coordinates identify $\mathbb G$ with $\mathbb R^n$ endowed with a typically non-abelian group law. Under this identification, the Haar measure coincides with the Lebesgue measure. Consequently, if read in exponential coordinates, the intrinsic problem \eqref{eq_intro_transport_equation} takes the form of the usual Euclidean transport problem, so that our results are relevant from both the intrinsic and the Euclidean viewpoint.

\medskip
Both the regularity and the structural assumptions that we impose on $\bb$ are naturally expressed in terms of the so-called \emph{horizontal distribution} $\hhh$, defined by
\begin{equation*}
\hhh_p\coloneqq V_1(p),\qquad p\in\mathbb G.
\end{equation*}
 On the one hand, our regularity assumptions involve derivatives only along horizontal directions. Since $\hhh$ does not, in general, exhaust the whole tangent space, these assumptions are weaker than their Euclidean counterparts. For instance, in the relevant case of Sobolev regularity (see e.g.~\cite{MR4986764}), one typically has
\begin{equation*}
W^{\ell,\theta}(\Om)\subsetneq W_{\mathcal H}^{\ell,\theta}(\Om),
\end{equation*}
where $W_{\mathcal H}^{\ell,\theta}(\Om)$ denotes the corresponding horizontal Sobolev space. In this sense, and as already pointed out in \cite{dpl1}, our results improve upon \cite{MR1022305} by requiring regularity only along horizontal directions.

\medskip
On the other hand, the horizontal distribution also underlies the structural condition imposed on the velocity field. In the smooth setting, a vector field is called \emph{contact} if its flow preserves $\hhh$. This natural class arises in contact geometry (see e.g.~\cite{Libermann,MR266258}), and has been studied thoroughly in the setting of Carnot groups (see e.g.~\cite{MR2395129,MR2917692}). Particular attention has been devoted to Heisenberg and filiform groups, especially in connection with the theory of quasiconformal mappings (see e.g.~\cite{MR788413,MR1317384,MR2016308}). Contact vector fields are characterized by the infinitesimal condition
\begin{equation}\label{intro_infinitesimal}
    [\bb,V_1]_p\subseteq\hhh_p,\qquad\text{for every $p\in \mathbb G$}.
\end{equation}
As \eqref{intro_infinitesimal} is still meaningful under mild regularity assumptions on $\bb$, we adopt it as definition. 
Our main well-posedness result, therefore, applies to (time-dependent) vector fields $\bb$ having horizontal Sobolev regularity in their spatial dependence, and satisfying \eqref{intro_infinitesimal} in the appropriate weak sense.

\medskip
The core idea behind the well-posedness of \eqref{eq_intro_transport_equation} exploits the machinery of \emph{renormalized solutions} introduced in \cite{MR1022305}: when bounded distributional solutions to \eqref{eq_intro_transport_equation} are renormalized solutions (cf.~\Cref{sec_well_posedness}), well-posedness follows by natural growth assumptions and uniform bounds on $\divv \bb$. In turn, the above \emph{renormalization property} follows by adapting the mollification argument introduced in \cite{MR1022305}. More precisely, mollifying \eqref{eq_intro_transport_equation} via \emph{group convolution} with appropriate intrinsic kernels yields
\begin{equation}\label{commutators_intro}
\frac{\partial u_\eps}{\partial \tau}
-\left\langle \bb,\nabla u_\eps\right\rangle
+cu_\eps
=\mathscr C_\eps
\qquad \text{in $(0,\bar{\tau}) \times \mathbb{G}$.}
\end{equation}
The error term $\mathscr C_\eps$, commonly known as \emph{commutator}, quantifies the lack of commutativity between divergence and convolution. The main technical step consists in proving that $\mathscr C_\eps$ converges to $0$ in $L^1_{\mathrm{loc}}$ as $\eps\searrow 0$. As we explain below, the commutator can be decomposed into terms involving difference quotients of the components of $\bb$, of order up to $s$, and suitable remainder terms. The convergence of the former is ensured by horizontal Sobolev regularity, while the contact condition forces the latter to vanish.

\medskip
The paper is organized as follows. In \Cref{sec_preliminaries}, we introduce the setting of Carnot groups and the tools needed throughout the paper, in particular group mollification and horizontal Sobolev spaces.

\medskip
In \Cref{sec_left_right}, we express left-invariant vector fields in terms of right-invariant ones. Namely, if $X^i_\alpha$ is as in \eqref{basis_intro_dhdhdhd} and $\left(X^i_\alpha\right)^r$ is its right-invariant counterpart, we show (see \eqref{eq_notation_multi-index} and \eqref{eq_notation_multi-index_2} for the notation) that
\begin{equation} \label{eq_intro_left_right}
X^{i_0}_{\beta_0} = \left(X^{i_0}_{\beta_0}\right)^r + \sum_{k=1}^{s-i_0} \frac{1}{k!} \sum_{\abs*{I_k} \leq s-i_0} \sum_{\mathcal A_k(I_k)} \sum_{\mathcal B_k(i_0,I_k)} c(I_k,i_0+\abs*{I_k})^{B_k}_{A_k B_{k-1}} x^{I_k}_{A_k} \left(X^{i_0+\abs*{I_k}}_{\beta_k}\right)^r.
\end{equation}
As we shall see, this formula will be crucial for relating the structure of $\mathscr C_\eps$ to the contact condition \eqref{intro_infinitesimal}.

\medskip
In \Cref{sec_contact}, we introduce the relevant class of velocity fields. As already mentioned, a \emph{contact vector field} $\bb$ with \emph{horizontal Sobolev regularity} is defined by requiring that, for some $\theta\in[1,\infty]$,
\begin{equation}\label{def_contact_SObolev_reg_intro}
    \bb=\sum_{i=1}^s\sum_{\alpha=1}^{\dim V_i}b^i_\alpha X^i_\alpha,\qquad b^i_\alpha\in W^{1,\theta}_{\hhh,\mathrm{loc}}(\mathbb G),\qquad [\bb,V_1]_p\subseteq\hhh_p\,\text{ for a.e.~$p\in\mathbb G$.}
\end{equation}
By means of \eqref{def_contact_SObolev_reg_intro}, we show that the first-order Sobolev regularity of the coefficients of $\bb$ improves according to the corresponding layer (see \Cref{rem_iterated_contact_components}). The contact condition in \eqref{def_contact_SObolev_reg_intro} can be equivalently formulated owing to the family of first-order compatibility conditions
\begin{equation} \label{eq_contact_components_first_intro}
X^1_\beta b^k_\gamma = \sum_{\alpha=1}^{\dim V_{k-1}} c(k-1,1)^\gamma_{\alpha \beta} \, b^{k-1}_\alpha\quad \text{ $\beta=1,\dots,\dim V_1$, $k=2,\dots,s$, $\gamma=1,\dots,\dim V_k$},
\end{equation}
where $c(i,j)^\gamma_{\alpha \beta}$ are the so-called \emph{structural constants} of $\mathfrak g$. We show that the improved Sobolev regularity actually upgrades \eqref{eq_contact_components_first_intro} to the higher-order system of iterated compatibility conditions
\begin{equation} \label{eq_intro_iterated_contact_components}
X^{I_k}_{A_k} b^i_{\beta_k} = \sum_{\mathcal B_{k-1}(i-\abs*{I_k},I_{k-1})} \sum_{\alpha=1}^{\dim V_{i-\abs*{I_k}}} c(i,I_k)_{B_{k-1} A_k}^{B_k} b^{i-\abs*{I_k}}_\alpha.
\end{equation}
(see \Cref{prop_contact_components} and  \Cref{peodneicjecneficnecijn}). The identities in \eqref{eq_intro_iterated_contact_components} will play an essential role in \Cref{sec_well_posedness}, as they allow us to get rid of the aforementioned remainder terms arising in the commutator analysis.
 The section concludes with a few structural remarks and further insights into the class of contact vector fields.

 \medskip
In \Cref{sec_difference_quotients}, we investigate the asymptotic behavior of (higher-order) difference quotients for functions with horizontal Sobolev regularity. More precisely, if $f\in W^{\ell,\theta}_\hhh$ and $\{\delta_\eps\}_{\eps>0}$ denotes the appropriate family of \emph{intrinsic dilations} of $\mathbb G$, we prove that
\begin{equation}\label{eq_intro_diff_quot}
\frac{f\left(p\cdot\delta_\eps(w)\right)-P^\ell_p f\left(\delta_\eps(w)\right)}{\eps^\ell}
\xrightarrow[\eps\searrow0]{L^\theta}0,
\end{equation}
where $P^\ell_\cdot f$ is the intrinsic Taylor polynomial of $f$ of \emph{homogeneous order} $\ell$. Thus, \eqref{eq_intro_diff_quot} provides an $L^\theta$-Taylor expansion adapted to the homogeneous structure of $\mathbb G$, thereby furnishing the second key ingredient in the forthcoming commutator analysis.

 \medskip
In \Cref{sec_well_posedness}, we prove the main results of the paper. As stressed above, the key step is the analysis of the commutator $\mathscr C_\eps$ in \eqref{commutators_intro}, which we carry out in \Cref{sec_refgogogog}.
Using \eqref{eq_intro_left_right} and performing some algebraic manipulations, one finds that $\mathscr C_\eps$ can be written as combination of terms involving the difference quotients of the components of $\bb$, namely
\begin{equation} \label{eq_intro_diff_quot_vf}
\int_{\mathbb{G}} \frac{b^i_\alpha(p \cdot \delta_\eps(w))-P^{i-1}_p b^i_\alpha (\delta_\eps(w))}{\eps^i} u(p \cdot \delta_\eps(w)) X^i_\alpha \rho(w),dw,
\end{equation}
and remainder terms of the form
\begin{equation} \label{eq_intro_remainders}
\left[X^{I_k}_{A_k} b^i_{\beta_k}(p) - \sum_{\mathcal B_{k-1}(i-\ell,I_{k-1})} \sum_{\alpha=1}^{\dim V_{i-\ell}} c(i,I_k)_{B_{k-1} A_k}^{B_k} b^{i-\ell}_\alpha(p)\right] \int_{\mathbb{G}} \delta_\eps\left(w^{I_k}_{A_k}\right) u(p \cdot \delta_\eps(w)) \eps^{-i} X^i_{\beta_k} \rho(w) \,dw.
\end{equation}
On the one hand, \eqref{eq_intro_diff_quot} and the improved Sobolev regularity provided by \eqref{def_contact_SObolev_reg_intro} imply that \eqref{eq_intro_diff_quot_vf} converges to $0$ in $L^1_{\mathrm{loc}}$. On the other hand, \eqref{eq_intro_iterated_contact_components} shows that \eqref{eq_intro_remainders} vanishes identically. Consequently, the renormalization property and the well-posedness of \eqref{eq_intro_transport_equation} follow essentially as in \cite{dpl1}, under the above-mentioned growth conditions and controllability assumptions on $\bb$. Once the Eulerian problem is settled, well-posedness for the Lagrangian problem \eqref{flow_equation_intro} is deduced by means of the general theory developed in \cite{MR2409676,MR3283066} (see \Cref{section_wpr}).

\medskip
Finally, in \Cref{sec_appendix} we collect some additional results which may be of independent interest, including an inductive representation formula for difference quotients and a quantitative mollification approximation which follows at once by the results of \Cref{sec_difference_quotients}.

\subsection*{Acknowledgments}
The authors thank Luigi Ambrosio, Enrico Le Donne, and Alessandro Ottazzi for many stimulating and enlightening conversations.

\section{Preliminaries} \label{sec_preliminaries}
\subsection{Main notation}
We denote by $\mathbb{N}_+$ the set of nonzero natural numbers. We also write $\infty=+\infty$. If $\bar{\tau} \in (0,\infty]$, the notation $[0,\bar{\tau}]$ stays for the usual closed interval if $\bar{\tau}<\infty$ or $[0,\infty)$ if $\bar{\tau}=\infty$. Given open sets $A,\Om$, when $\overline{A}$ is a compact subset of $\Om$, we write $A\Subset\Om$. If $\theta \in [1,\infty]$, we denote its H\"older conjugate by $\theta'\in [1,\infty]$, i.e., $\frac{1}{\theta}+\frac{1}{\theta'}=1$. 
If $f$ is differentiable at a point $q$, we denote by $df_q$ its differential at $q$.

\subsection{Carnot groups}
A Lie algebra $\mathfrak{g}$ is said to be \emph{stratified} if it admits a \emph{stratification}, i.e., there exist nontrivial linear subspaces $V_1, \dots, V_s$ of $\mathfrak{g}$ such that
\begin{equation}\label{defcarnotgroup}
\mathfrak{g}= V_1 \oplus \dots \oplus V_s, \qquad V_{i+1} = [V_1,V_i] \quad \text{for $i=1,\dots,s-1$,} \qquad [V_1,V_s]=\{0\}.
\end{equation}
We stress that $[V_i,V_j]$ denotes the linear span of vectors of the form $[X,Y]$ with $X \in V_i$ and $Y \in V_j$. Recall that the Jacobi identity
\begin{equation} \label{eq_Jacobi}
[X,[Y,Z]]+[Y,[Z,X]]+[Z,[X,Y]]=0 \qquad \text{for every $X,Y,Z \in\mathfrak{g}$}
\end{equation}
holds. Setting $V_k=\{0\}$ if $k>s$, \eqref{eq_Jacobi} implies that $[V_i,V_j] \subseteq V_{i+j}$ for every $i \neq j$. If $(\mathbb{G},\cdot)$ is a Lie group, we denote by $\mathfrak{g}$ its Lie algebra. Moreover, we define
\begin{equation*}
L_p(q) = p \cdot q, \qquad R_p(q) = q \cdot p \qquad \text{for every $p,\,q \in \mathbb{G}$}
\end{equation*}
to be the left and right translations, respectively. Left-invariant vector fields are complete (see \cite[Theorem 9.18]{MR2954043}). Accordingly, if $Z \in \mathfrak{g}$ and $\gamma$ is its integral curve starting from the identity element $e$, we define the exponential map $\exp : \mathfrak{g} \to \mathbb{G}$ by
\begin{equation*}
\exp(Z) \coloneqq \gamma(1).
\end{equation*}
We say that $(\mathbb{G},\cdot)$ is a \emph{stratified Lie group} if it is connected, simply connected, and its Lie algebra $\mathfrak{g}$ is stratified. Let $\mathfrak{g}=V_1 \oplus \dots \oplus V_s$ be a stratification and set $n_i \coloneqq \dim V_i$. Set also $n \coloneqq \dim \mathfrak{g} = \dim \mathbb{G}$. Let $\left\{X^i_\alpha\right\}_{\alpha=1,\dots,n_i}$ be a basis for $V_i$ for every $i=1,\dots,s$. Since the exponential map of a stratified Lie group is a diffeomorphism (see \cite[Theorem 3.6.2]{MR746308}), we can define a system of \emph{graded coordinates} associated with the basis $\left\{X^i_\alpha\right\}_{\substack{i=1,\dots,s \\ \alpha=1,\dots,n_i}}$ for $\mathfrak{g}$:
\begin{equation*}
\rr^n \ni \left(x^1_1,\dots,x^1_{n_1},x^2_1,\dots,x^2_{n_2},\dots,x^s_1,\dots,x^s_{n_s}\right) \mapsto \exp\left(\sum_{i=1}^s \sum_{\alpha=1}^{n_i} x^i_\alpha X^i_\alpha\right) \in \mathbb{G}.
\end{equation*}
In these coordinates, the identity element is $0$ and the inverse element of $p \in \mathbb{G}$ is $p^{-1}=-p$. We denote by $\log : \mathbb{G} \to \mathfrak{g}$ the inverse of $\exp$. We fix a system of graded coordinates and denote by $X^i_\alpha$ and $x^i_\alpha(p)$ (or simply $x^i_\alpha$ if there is no ambiguity), with $i=1,\dots,s$ and $\alpha=1,\dots,n_i$, the $\alpha$-th vector field of the basis of $V_i$ and the corresponding graded coordinate of $p \in \mathbb{G}$. Sometimes, we will adopt the notation
\begin{equation*}
p=\left(x^1,\dots,x^s\right), \qquad x^i=\left(x^i_1,\dots,x^i_{n_i}\right) \in \rr^{n_i}.
\end{equation*}
We equip $\mathbb{G}$ with the Riemannian metric $\langle \cdot, \cdot \rangle$ that makes the above basis orthonormal. Accordingly, the Riemannian gradient reads as
\begin{equation*}
\nabla f =\sum_{i=1}^s\sum_{\alpha=1}^{n_i}\left(X^i_\alpha f\right) X^i_\alpha.
\end{equation*}
We denote by $\left(X^i_\alpha \right)^r$ the right-invariant vector field corresponding to $X^i_\alpha$, that is,
\begin{equation*}
\left(X^i_\alpha \right)^r(p)=\left(dR_p\right)_e\left(X^i_\alpha(e)\right) \qquad \text{for every $p \in \mathbb{G}$.}
\end{equation*}
Moreover, we define the \emph{structural constants} $c(i,j)^\gamma_{\alpha \beta} \in \rr$ by
\begin{equation} \label{eq_structural_constants}
\left[X^i_\alpha,X^j_\beta\right] \coloneqq \sum_{\gamma=1}^{n_{i+j}} c(i,j)^\gamma_{\alpha \beta} X^{i+j}_\gamma
\end{equation}
for every $i,j=1,\dots,s$ such that $i+j \leq s$, $\alpha=1,\dots,n_i$, $\beta=1,\dots,n_j$. Observe that
\begin{equation} \label{eq_skew_structural_constants}
c(i,j)_{\alpha \beta}^\gamma = - c(j,i)_{\beta \alpha}^\gamma.
\end{equation}
We endow $\mathbb{G}$ with a \emph{homogeneous structure} (see \cite{BonLanUgu}). Precisely, for any $\lambda>0$, we define the \emph{intrinsic dilations} by setting $\delta_\lambda Z \coloneqq \lambda^i Z$ if $Z \in V_i$ and extending it by linearity on the whole algebra $\mathfrak{g}$. Then, considering their image through the exponential map, we get a one-parameter group of dilations on $\mathbb{G}$, which in graded coordinates are expressed as follows:
\begin{equation*}
\delta_\lambda(p) \coloneqq \left(\lambda x^1_1,\dots,\lambda x^1_{n_1}, \lambda^2 x^2_1,\dots,\lambda^2 x^2_{n_2},\dots,\lambda^s x^s_1,\dots,\lambda^s x^s_{n_s}\right) \qquad \text{for every $\lambda>0, \, p=\left(x^1,\dots,x^s\right) \in \mathbb{G}$.}
\end{equation*}
The \emph{horizontal distribution} $\hhh$ is defined by
\begin{equation*}
\hhh_p \coloneqq V_1(p) \qquad \text{for every $p\in \mathbb{G}$.}
\end{equation*}
We endow $\mathbb{G}$ with the so-called \emph{Carnot-Carathéodory distance} $d$ (see \cite{MR3587666}): given $p,q\in\mathbb{G}$, set
\begin{equation*}
    d(p,q) \coloneqq \inf\left\{\int_0^1\sqrt{\left\langle\dot\gamma,\dot\gamma\right\rangle}\,d\tau\,:\, \gamma:[0,1]\to\mathbb{G}\text{ is absolutely continuous, horizontal, } \gamma(0)=p,\,\gamma(1)=q\right\},
\end{equation*}
where an absolutely continuous curve is \emph{horizontal} if $\dot\gamma(\tau)\in\hhh_{\gamma(\tau)}$ for a.e.~$\tau$. By Chow's theorem, $d$ is finite. The metric space $(\mathbb{G},d)$ is called a \emph{Carnot group}. We indicate by $B(p,r) \coloneqq \{q \in\mathbb{G} : d(p,q)<r\}$ the open ball with center $p \in \mathbb{G}$ and radius $r>0$.
The Carnot-Carathéodory distance is compatible with the group and homogeneous structures:
\begin{equation*} 
\begin{split}
    d(w \cdot p, w \cdot q) & = d(p,q),\\
    d(\delta_\lambda(p),\delta_\lambda(q)) & = \lambda d(p,q)
\end{split}
\qquad \text{for every $p,q,w \in \mathbb{G},\,\lambda>0$.}
\end{equation*}
The Lebesgue measure $|\cdot|$ is a Haar measure:
\begin{equation} \label{eq_Lebesgue_Haar}
\begin{split}
    \abs*{L_p(E)}&=\abs*{E},\\
    \abs*{R_p(E)}&=\abs*{E}
\end{split}
\qquad \text{for every $E \subseteq\mathbb{G}$ measurable, $p \in\mathbb{G}$.}
\end{equation}
Furthermore,
\begin{equation} \label{eq_dilation_Lebesgue}
\abs*{\delta_\lambda(E)}=\lambda^Q\abs*{E} \qquad \text{for every $E \subseteq\mathbb{G}$ measurable, $\lambda>0$,}
\end{equation}
where $Q \coloneqq \sum_{i=1}^s i n_i$ is the \emph{homogeneous dimension} of $(\mathbb{G},\cdot)$ (see \cite{MR3587666}). Using arguments identical to those in \cite[Lemma 2.1]{dpl1} and \cite[Lemma 2.2]{dpl1}, the following simple properties hold.
\begin{lemma}\label{lemscambiarederivateeinversione}
Define $\iota: \mathbb{G}\to\mathbb{G}$ by $\iota(q)=q^{-1}$. Then
\begin{equation*} 
X^i_\alpha(\varphi \circ \iota)=-\left(\left(X^i_\alpha \right)^r \varphi\right) \circ \iota \qquad \text{for every } \varphi \in C^1(\mathbb{G}),\, i=1,\ldots,s, \, \alpha=1,\ldots,n_i.
\end{equation*}
In particular, if $\varphi=\varphi\circ\iota$, then $X^i_\alpha\varphi=-\left(\left(X^i_\alpha \right)^r\varphi\right)\circ\iota$.
\end{lemma}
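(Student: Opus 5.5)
The plan is to compute $X^i_\alpha(\varphi\circ\iota)$ at a point $q$ directly from the definition of a left-invariant vector field as a derivative along a one-parameter subgroup. Since $X^i_\alpha$ is left-invariant, its integral curve through $q$ is $t\mapsto q\cdot\exp(tX^i_\alpha)$. Hence, for $\varphi\in C^1(\mathbb{G})$,
\begin{equation*}
X^i_\alpha(\varphi\circ\iota)(q)=\frac{d}{dt}\Big|_{t=0}\varphi\left(\left(q\cdot\exp(tX^i_\alpha)\right)^{-1}\right)=\frac{d}{dt}\Big|_{t=0}\varphi\left(\exp(-tX^i_\alpha)\cdot q^{-1}\right),
\end{equation*}
where we have used $(ab)^{-1}=b^{-1}a^{-1}$ together with $\exp(tX)^{-1}=\exp(-tX)$.

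Next I will identify the right-hand side with a right-invariant derivative. By definition, $\left(X^i_\alpha\right)^r(p)=\left(dR_p\right)_e\left(X^i_\alpha(e)\right)$. The curve $t\mapsto\exp(tX^i_\alpha)$ passes through $e$ at $t=0$ with velocity $X^i_\alpha(e)$, so $t\mapsto\exp(tX^i_\alpha)\cdot p=R_p(\exp(tX^i_\alpha))$ passes through $p$ with velocity $\left(X^i_\alpha\right)^r(p)$. Therefore
\begin{equation*}
\frac{d}{dt}\Big|_{t=0}\varphi\left(\exp(sX^i_\alpha)\cdot p\right)\Big|_{s=-t}=-\left(\left(X^i_\alpha\right)^r\varphi\right)(p).
\end{equation*}
Applying this with $p=q^{-1}=\iota(q)$ gives $X^i_\alpha(\varphi\circ\iota)(q)=-\left(\left(X^i_\alpha\right)^r\varphi\right)(\iota(q))$, which is the claim. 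The chain rule is legitimate because $\varphi\in C^1$ and all the curves involved are smooth.

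For the second assertion, if $\varphi=\varphi\circ\iota$, then $X^i_\alpha\varphi=X^i_\alpha(\varphi\circ\iota)$, and the first identity applies directly.

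There is no real obstacle here. The only point requiring care is the sign and the order of the factors when inverting the product $q\cdot\exp(tX^i_\alpha)$: the exponential factor must end up on the \emph{left}, which is exactly what makes the derivative right-invariant. As a sanity check in graded coordinates, $\iota(p)=-p$ is compatible with this, since at $q=e$ both sides reduce to $-\partial_{x^i_\alpha}\varphi(0)$.
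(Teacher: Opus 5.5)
Your proof is correct: the integral curve of $X^i_\alpha$ through $q$ is $t\mapsto q\cdot\exp(tX^i_\alpha)$, its inverse is $\exp(-tX^i_\alpha)\cdot q^{-1}=R_{q^{-1}}\bigl(\exp(-tX^i_\alpha)\bigr)$, and differentiating at $t=0$ gives $-\bigl((X^i_\alpha)^r\varphi\bigr)(q^{-1})$; the second assertion then follows immediately. The paper gives no proof here and only refers to the identical argument for Heisenberg groups in \cite{dpl1}; your computation is a complete intrinsic argument that uses neither graded coordinates nor the stratification, so it holds verbatim in any Lie group.
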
 
\begin{lemma} \label{lem_continuity_translations}
Let $\theta \in [1,\infty)$ and $f \in L^\theta(\mathbb{G})$. Then $\displaystyle \lim_{q\to 0}\norm*{f \circ R_q - f}_{L^\theta(\mathbb{G})} =0$.
\end{lemma}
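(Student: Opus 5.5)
The plan is the standard density argument for continuity of translations in $L^\theta$, adapted to right translations in $\mathbb{G}$ and using the invariance of Lebesgue measure recorded in \eqref{eq_Lebesgue_Haar}.

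\emph{Compactly supported continuous functions.} First take $g\in C_c(\mathbb{G})$ with support in a compact set $K$. For $q$ in the closed ball $\overline{B(0,1)}$ (in graded coordinates, a compact neighbourhood of $0$), the functions $g\circ R_q$ are supported in $K\cdot \overline{B(0,1)}^{-1}$, and this set is compact because the group operations are continuous. Since $g$ is uniformly continuous and $(p,q)\mapsto p\cdot q$ is continuous, we get $\sup_p |g(p\cdot q)-g(p)|\to 0$ as $q\to 0$. More precisely, $(p,q)\mapsto g(p\cdot q)-g(p)$ is continuous on the compact set $\bigl(K\cdot\overline{B(0,1)}^{-1}\bigr)\times \overline{B(0,1)}$ and vanishes on $q=0$, hence it is uniformly small for $q$ small. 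Multiplying this uniform bound by the finite measure of the compact support gives $\norm{g\circ R_q-g}_{L^\theta}\to0$.

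\emph{General $f\in L^\theta(\mathbb{G})$.} Given $\eta>0$, use the density of $C_c$ in $L^\theta$ for $\theta<\infty$ to choose $g\in C_c(\mathbb{G})$ with $\norm{f-g}_{L^\theta}<\eta$. By \eqref{eq_Lebesgue_Haar}, $R_q$ preserves Lebesgue measure, so
\[
\norm{(f-g)\circ R_q}_{L^\theta}=\norm{f-g}_{L^\theta}<\eta .
\]
Equivalently, the change of variables under the measure-preserving bijection $R_q$ gives this identity. The triangle inequality then yields
\[
\norm{f\circ R_q-f}_{L^\theta}\le 2\eta+\norm{g\circ R_q-g}_{L^\theta},
\]
and the last term tends to $0$ by the first step. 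Since $\eta$ is arbitrary, the lemma follows.

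There is no real obstacle here. The only point needing care is that right translation preserves the measure. This holds because Carnot groups are nilpotent and hence unimodular, which is exactly what \eqref{eq_Lebesgue_Haar} asserts; in coordinates, the Jacobian of $R_q$ is triangular with unit diagonal. The restriction $\theta<\infty$ is what makes the density step possible.
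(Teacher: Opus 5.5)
Your argument is correct and is essentially the one the paper relies on. The paper gives no proof here and defers to the identical Heisenberg-group argument in \cite{dpl1}, which is the same density scheme: uniform continuity for $C_c$ functions with a common compact support, then approximation in $L^\theta$ using the right-invariance \eqref{eq_Lebesgue_Haar} of Lebesgue measure.
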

We denote by $\mathbb{X}(\mathbb{G})$ the class of locally integrable vector fields in $\mathbb{G}$, and we adopt the notation
\begin{equation} \label{eq_notation_vector_field}
\bb=\sum_{i=1}^s \sum_{\alpha=1}^{n_i} b^i_\alpha X^i_\alpha \qquad \text{for every $\bb \in \mathbb{X}(\mathbb{G})$.}    
\end{equation}
Since the frame $\left\{X^i_\alpha\right\}_{\substack{i=1,\dots,s \\ \alpha=1,\dots,n_i}}$ is orthonormal and its coefficients matrix with respect to the canonical basis of $\rr^n$ has determinant equal to $1$ (cf.~\cite[Corollary 1.3.19]{BonLanUgu}), the Riemannian volume coincides with the Lebesgue measure. Hence, we define the divergence of a vector field $\bb \in \mathbb{X}(\mathbb{G})$ as the distribution $\divv \bb$ such that
\begin{equation*}
\langle \divv \bb, \varphi \rangle \coloneqq -\int_{\mathbb{G}} d\varphi(\bb) \,dp \qquad \text{for every $\varphi \in C^\infty_c(\mathbb{G})$.}
\end{equation*}
Thanks to \eqref{eq_Lebesgue_Haar}, one can prove that left and right-invariant vector fields are divergence-free.

\subsection{Group convolution}\label{subsec_groupconv}
We refer to \cite{MR0657581} for more details. Given $f,g \in L^1_{\mathrm{loc}}(\mathbb{G})$, their \emph{group convolution} $f*g$ is defined by
\begin{equation*}
(f*g)(p) \coloneqq \int_{\mathbb{G}} f(q)g\left(q^{-1} \cdot p\right) \, dq \overset{\eqref{eq_Lebesgue_Haar}}{=} \int_{\mathbb{G}} f\left(p \cdot q^{-1}\right)g(q) \, dq \qquad \text{for every $p \in \mathbb{G}$,}
\end{equation*}
provided that the integrals converge. If the domain of $f$ and $g$ is an arbitrary open set $\Omega \subseteq \mathbb{G}$, we can still define $f*g$ by extending them to be $0$ outside $\Omega$.
Fix a mollifier $\rho \in C_c^\infty(\mathbb{G})$ such that
\begin{equation}\label{eq_mollificatori}
\rho \geq 0, \quad \rho(p)=\rho\left(p^{-1}\right), \quad \int_{\mathbb{G}} \rho\,dp = 1 \quad \text{and} \quad \rho \equiv 0\text{ in } B(0,1)^c
\end{equation}
and set
\begin{equation} \label{eq_def_mollifier}
\rho_\varepsilon(p)=\frac{1}{\varepsilon^Q}\rho \left(\delta_{\frac{1}{\varepsilon}}(p) \right) \qquad \text{for every $\eps>0$,\, $p \in \mathbb{G}$.}
\end{equation}
Observe that
\begin{equation} \label{eq_derivative_mollification}
\left(X^i_\alpha\right)^r\rho_\varepsilon(p) =\frac{1}{\varepsilon^{Q+i}} \left(\left(X^i_\alpha\right)^r \rho\right) \left(\delta_{\frac{1}{\varepsilon}}(p) \right) \qquad \text{for every $i=1,\ldots,s$,\, $\alpha=1,\ldots,n_i$,\, $p\in\mathbb{G}$.}
\end{equation}
The following standard convergence properties of group mollification hold.
\begin{proposition} \label{prop_group_mollification}
Let $\theta \in [1,\infty)$. Let $u \in L^\theta(\mathbb{G})$ and set $u_\varepsilon=u * \rho_\varepsilon$ for every $\varepsilon>0$. Then
\begin{enumerate}[(i)]
\item $u_\eps \xrightarrow[]{L^\theta} u$ as $\varepsilon \searrow 0$,
\item $\norm*{u_\eps}_{L^\theta(\mathbb{G})} \leq \norm*{u}_{L^\theta(\mathbb{G})}$ for every $\varepsilon>0$.
\end{enumerate}
\end{proposition}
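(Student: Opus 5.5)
We prove both items from the second expression for the convolution, $u_\eps(p)=\int_{\mathbb{G}} u\left(p\cdot q^{-1}\right)\rho_\eps(q)\,dq$. Since $\rho_\eps\ge 0$ and $\int_{\mathbb{G}}\rho_\eps\,dq=1$ (by \eqref{eq_dilation_Lebesgue} and the normalization in \eqref{eq_mollificatori}), $\rho_\eps(q)\,dq$ is a probability measure. In this form, $u_\eps(p)$ is an average of the right translates $u\circ R_{q^{-1}}$, and both items reduce to Minkowski's integral inequality plus \Cref{lem_continuity_translations}.

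For (ii), apply Minkowski's integral inequality (or Jensen's inequality for $|\cdot|^\theta$ with respect to the probability measure $\rho_\eps\,dq$, followed by Fubini):
\begin{equation*}
\norm*{u_\eps}_{L^\theta(\mathbb{G})}\le \int_{\mathbb{G}} \norm*{u\circ R_{q^{-1}}}_{L^\theta(\mathbb{G})}\,\rho_\eps(q)\,dq=\norm*{u}_{L^\theta(\mathbb{G})}.
\end{equation*}
The equality uses the right invariance of the Lebesgue measure \eqref{eq_Lebesgue_Haar}, which gives $\norm{u\circ R_{q^{-1}}}_{L^\theta}=\norm{u}_{L^\theta}$. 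Measurability of $(p,q)\mapsto u(p\cdot q^{-1})\rho_\eps(q)$ holds because it is a Borel function composed with the smooth group operation.

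For (i), since $\int\rho_\eps=1$, we can write $u_\eps(p)-u(p)=\int_{\mathbb{G}}\left(u(p\cdot q^{-1})-u(p)\right)\rho_\eps(q)\,dq$. Minkowski's inequality then gives
\begin{equation*}
\norm*{u_\eps-u}_{L^\theta(\mathbb{G})}\le \int_{\mathbb{G}}\norm*{u\circ R_{q^{-1}}-u}_{L^\theta(\mathbb{G})}\rho_\eps(q)\,dq\le \sup_{q\in B(0,\eps)}\norm*{u\circ R_{q^{-1}}-u}_{L^\theta(\mathbb{G})}.
\end{equation*}
The last inequality uses $\operatorname{supp}\rho_\eps\subseteq \overline{B(0,\eps)}$, which follows from \eqref{eq_mollificatori} and the homogeneity $d(0,\delta_\eps p)=\eps\, d(0,p)$. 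Moreover, $q\in B(0,\eps)$ implies $q^{-1}\in B(0,\eps)$, since $d(0,q^{-1})=d(q,0)$ by left invariance of $d$. As $\eps\searrow0$ we get $q^{-1}\to 0$ uniformly on the support, so \Cref{lem_continuity_translations} shows that the right-hand side tends to $0$.

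The only delicate point is making sure the translations that appear are \emph{right} translations, so that \Cref{lem_continuity_translations} applies exactly as stated; this is why we use the form $u(p\cdot q^{-1})$ of the convolution. The rest is routine measure theory.
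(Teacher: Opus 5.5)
Your argument is correct: averaging right translates $u\circ R_{q^{-1}}$ against the probability measure $\rho_\eps(q)\,dq$, then using Minkowski's integral inequality, right invariance of Lebesgue measure, and continuity of right translations (\Cref{lem_continuity_translations}), is the standard proof the paper relies on when it calls these properties standard and cites Folland--Stein without giving a proof. The paper runs the same scheme in the appendix for its quantitative mollification estimate: it writes $u_\eps-u$ as an average of $u(p\cdot\delta_\eps(q^{-1}))-u(p)$ and applies Minkowski's inequality.
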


\subsection{Horizontal Sobolev spaces}
We refer to \cite{MR494315,MR4544986} for further information. Fix an open subset $\Om\subseteq \mathbb{G}$ and $u\in L^1_{\mathrm{loc}}(\Om)$. If $i=1,\dots,s$ and $\alpha=1,\dots,n_i$, the distribution $X^i_\alpha u$ is defined by 
\begin{equation*}
\left\langle X^i_\alpha u,\varphi\right\rangle \coloneqq -\int_\Om u X^i_\alpha \varphi\,dp \qquad \text{for every $\varphi\in C^\infty_c(\Om)$}.
\end{equation*}
If $\theta\in[1,\infty]$, we define the \emph{horizontal Sobolev space} $W^{1,\theta}_\hhh(\Om)$ by
\begin{equation*}
W^{1,\theta}_\hhh(\Om)\coloneqq \left\{u\in L^\theta(\Om)\,:\,X^1_\alpha u\in L^\theta(\Om)\text{ for every $\alpha=1,\dots,n_1$}\right\}.
\end{equation*}
The spaces $W^{1,\theta}_{\hhh,\mathrm{loc}}(\Om)$ and $W^{\ell,\theta}_\hhh(\Om)$, for $\ell=2,3,\ldots$, are defined accordingly. It is easy to see that the vector space $W^{\ell,\theta}_\hhh(\Om)$, endowed with the norm
\[
\Vert u\Vert_{W^{\ell,\theta}_\hhh(\Om)}\coloneqq\Vert u\Vert_{L^\theta(\Om)}+ \sum_{\alpha=1}^{n_1} \left\|X^1_\alpha u\right\|_{L^\theta(\Om)}+\dots+\sum_{\alpha_1,\ldots,\alpha_\ell=1}^{n_1} \left\|X^1_{\alpha_1}\ldots X^1_{\alpha_\ell} u \right\|_{L^\theta(\Om)},
\]
is a Banach space for every $1\leq \theta \leq \infty$ and $\ell \in \mathbb N_+$. Moreover, the following Meyers-Serrin-type approximation result holds (see \cite[Theorem A.2]{MR1404326}). 
\begin{theorem} \label{meyersserrin}
    Let $\Om\subseteq \mathbb{G}$ be an open set. Let $\ell \in\mathbb N_+$ and $\theta \in [1,\infty)$. Let $u\in W^{\ell,\theta}_\hhh(\Om)$. There exists a sequence $\{u_h\}_{h\in\mathbb N}\subseteq C^\infty(\Om)\cap W^{\ell,\theta}_\hhh(\Om)$ such that 
    \begin{equation*}
        \lim_{h\to\infty }\|u_h-u\|_{W^{\ell,\theta}_\hhh(\Om)}=0.
    \end{equation*}
\end{theorem}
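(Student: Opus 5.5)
The plan is the classical Meyers--Serrin scheme from the Euclidean setting (as in \cite[Theorem A.2]{MR1404326}), with Euclidean convolution replaced by the group convolution of \Cref{subsec_groupconv}. The reason for this choice is that group convolution commutes with left-invariant derivatives, in the form $X(u*\rho_\eps)=u*X\rho_\eps$ when acting on the second factor. When $X^1_\alpha$ hits the first factor one instead gets a right-invariant derivative, so I will arrange the convolution so that the left-invariant fields pass through the mollifier. Concretely, I will use $u_\eps(p)=\int u(p\cdot q^{-1})\rho_\eps(q)\,dq$ with the mollifier acting on the right. Since $X^1_\alpha$ is left-invariant, differentiating in $p$ gives $X^1_\alpha(u*\rho_\eps)=(X^1_\alpha u)*\rho_\eps$ in the distributional sense. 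Iterating, $X^1_{\alpha_1}\cdots X^1_{\alpha_j}(u*\rho_\eps)=(X^1_{\alpha_1}\cdots X^1_{\alpha_j}u)*\rho_\eps$ for all $j\le\ell$, provided $u$ has compact support in $\Om$ and $\eps$ is small. By \Cref{prop_group_mollification}(i), $u*\rho_\eps\to u$ in $W^{\ell,\theta}_\hhh$ for compactly supported $u$.

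The first step is localization. Choose an exhaustion $\Om_k\Subset\Om_{k+1}$ by open sets with $\Om_0=\emptyset$, and a subordinate smooth partition of unity $\{\psi_k\}$ with $\operatorname{supp}\psi_k\subseteq\Om_{k+1}\setminus\overline{\Om_{k-1}}$. Given $\delta>0$, I will pick $\eps_k$ small enough that two conditions hold:
\begin{itemize}
\item $(\psi_k u)*\rho_{\eps_k}$ is supported in $\Om_{k+2}\setminus\overline{\Om_{k-2}}$, which is possible because $\operatorname{supp}(f*\rho_\eps)\subseteq \operatorname{supp}f\cdot \overline{B(0,\eps)}$;
\item $\|(\psi_k u)*\rho_{\eps_k}-\psi_k u\|_{W^{\ell,\theta}_\hhh}<\delta 2^{-k}$.
\end{itemize}
Set $v=\sum_k(\psi_k u)*\rho_{\eps_k}$. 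The sum is locally finite, so $v\in C^\infty(\Om)$, and $\|v-u\|\le\delta$.

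The one point needing care is that $\psi_k u\in W^{\ell,\theta}_\hhh$. This follows from the Leibniz rule for horizontal derivatives, using that $\psi_k$ is smooth with bounded derivatives. Distributionally, $X(\psi u)=\psi Xu+uX\psi$, and iterating yields only products of derivatives of $\psi_k$ with horizontal derivatives of $u$ of order $\le\ell$, all of which lie in $L^\theta$.

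The main obstacle is to verify carefully the commutation $X^1_\alpha(f*\rho_\eps)=(X^1_\alpha f)*\rho_\eps$ for distributional derivatives, i.e.\ to get the left/right-invariance correct. I will check it by testing against $\varphi\in C^\infty_c$ and changing variables $p\mapsto p\cdot q$ (Haar invariance \eqref{eq_Lebesgue_Haar}). This moves $X^1_\alpha$ onto $\varphi(\cdot\, q^{-1})$, and $X^1_\alpha$ commutes with right translations only if it is left-invariant acting through left multiplication. If the orientation comes out wrong, I will swap the roles of the factors in the convolution, i.e.\ use $\rho_\eps*f$ versus $f*\rho_\eps$ as appropriate; the conclusion of \Cref{prop_group_mollification} holds for either order thanks to \Cref{lem_continuity_translations} and its left-translation analogue.
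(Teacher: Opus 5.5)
Your overall scheme works, but the orientation you commit to first is the wrong one, and the reason you give for it is false.

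In $(u*\rho_\eps)(p)=\int_{\mathbb G}u(p\cdot q^{-1})\rho_\eps(q)\,dq$ the function $u$ is \emph{right}-translated, whereas left-invariant fields commute with \emph{left} translations. The first form of the group convolution gives $X^1_\alpha(u*\rho_\eps)=u*(X^1_\alpha\rho_\eps)$. Integrating by parts instead gives $(X^1_\alpha u)*\rho_\eps=u*\bigl((X^1_\alpha)^r\rho_\eps\bigr)$. By \eqref{eq_right_vector_fields} these differ by $u*\bigl((X^1_\alpha-(X^1_\alpha)^r)\rho_\eps\bigr)$. Up to sign, this is the commutator $\mathscr C^1_\eps$ of \Cref{prop_formadelresto} for the constant field $\bb=X^1_\alpha$. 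It is in general nonzero, and $X^1_\alpha$ is in general not contact, so \Cref{lem_commutator} does not dispose of it. Showing that it tends to $0$ would need a separate Friedrichs-type argument.

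Your fallback is the correct fix. For $(\rho_\eps*f)(p)=\int_{\mathbb G}\rho_\eps(q)f(q^{-1}\cdot p)\,dq$, test against $\varphi$ and substitute $p=q\cdot p'$, using left Haar invariance. This gives $X^1_\alpha(\rho_\eps*f)=\rho_\eps*(X^1_\alpha f)$ exactly, and hence the same for all iterated horizontal derivatives. Two details then change:
\begin{enumerate}[(i)]
\item The convergence $\rho_\eps*g\to g$ in $L^\theta$ now rests on $\|g\circ L_q-g\|_{L^\theta}\to 0$ as $q\to 0$. This is proved like \Cref{lem_continuity_translations}, using $|L_p(E)|=|E|$.
\item The support bound becomes $\operatorname{supp}(\rho_\eps*f)\subseteq\overline{B(0,\eps)}\cdot\operatorname{supp}f$. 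This is no longer a $d$-neighbourhood of $\operatorname{supp}f$, but it still shrinks to $\operatorname{supp}f$ when the latter is compact, by continuity of the group law. So your choice of $\eps_k$ still works.
\end{enumerate}
With this, the localization, the Leibniz rule and the locally finite sum go through as you describe. In the Leibniz rule only ordered subwords of $X^1_{\alpha_1}\cdots X^1_{\alpha_j}$ fall on $u$, so everything stays in $L^\theta$.

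As for the comparison: the paper does not prove this statement itself; it quotes \cite[Theorem A.2]{MR1404326}. That argument is designed for general Carnot--Carath\'eodory vector fields with no group structure. There one mollifies in the Euclidean sense and controls the commutator between the fields and the mollifier by a Friedrichs-type lemma. Your route is specific to Carnot groups but more elementary. Once the mollifier sits on the left, left-invariance makes the commutator vanish identically, so horizontal derivatives of any order $\ell$ pass through the convolution at no extra cost. The price is that it does not extend to general H\"ormander systems.
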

Observe that, if $\bb$ is as in \eqref{eq_notation_vector_field} and $b^i_\alpha \in W^{1,\theta}_{\hhh,\mathrm{loc}}(\Om)$ for every $i=1,\dots,s$ and $\alpha=1,\dots,n_i$, then
\begin{equation} \label{eq_divergence}
\divv \bb = \sum_{i=1}^s \sum_{\alpha=1}^{n_i} X^i_\alpha b^i_\alpha.
\end{equation}

\section{Left and right-invariant vector fields} \label{sec_left_right}
In this section, we express left-invariant vector fields with respect to the right-invariant frame, providing some relevant examples. The key ingredient is the differential at the identity element $e$ of the conjugation map: for any $p \in \mathbb{G}$, set $C_p \coloneqq L_p \circ R_{p^{-1}}$ and $\mathrm{Ad}_p \coloneqq \left(d C_p\right)_e$. Denote by $\mathrm{End}(\mathfrak{g})$ the set of all endomorphisms of $\mathfrak{g}$ and define the operator $\mathrm{ad}_Z: \mathfrak{g} \to \mathrm{End}(\mathfrak{g})$ by $\mathrm{ad}_Z X \coloneqq [Z,X]$ for every $X,Z \in \mathfrak{g}$. Then, it is known that (see \cite[Proposition 1.91]{MR1920389})
\begin{equation} \label{eq_adjoint_exponential}
\mathrm{Ad}_{\exp(Z)} = e^{\mathrm{ad}_Z},
\end{equation}
where $e^A \coloneqq \sum_{k=0}^\infty A^k/k! \in \mathrm{End}(\mathfrak{g})$ for every $A \in \mathrm{End}(\mathfrak{g})$. We introduce a specific notation to simplify the next computations. For all $k \in \mathbb{N}_+$, we denote a generic $k$-multi-index by $I_k=(i_1,\dots,i_k) \in \{1,\dots,s\}^k$, and we set $\abs*{I_k} \coloneqq i_1+\dots+i_k$. Moreover, given $i=1,\dots,s-1$, we define the sets
\begin{equation} \label{eq_notation_multi-index}
\begin{split}
    \mathcal A_k(I_k) & = \left\{(\alpha_1,\dots,\alpha_k) \in \mathbb{N}^k : 1 \leq \alpha_j \leq n_{i_j} \right\},\\
    \mathcal B_k(i,I_k) & = \left\{(\beta_1,\dots,\beta_k) \in \mathbb{N}^k : 1 \leq \beta_j \leq n_{i+i_1+\dots+i_j} \right\}, \qquad \text{ provided that $i+\abs*{I_k} \leq s$.}
\end{split}
\end{equation}
We adopt the notation
\begin{equation*}
\sum_{\mathcal A_k(I_k)} = \sum_{(\alpha_1,\dots,\alpha_k) \in \mathcal A_k}, \qquad \sum_{\mathcal B_k(i,I_k)} = \sum_{(\beta_1,\dots,\beta_k) \in \mathcal B_k(i,I_k)}.
\end{equation*}
\begin{lemma}
Let $i_0=1,\dots,s-1$ and $\beta_0=1,\dots,n_{i_0}$. If $Z=\displaystyle \sum_{i=1}^s \sum_{\alpha=1}^{n_i} x^i_\alpha X^i_\alpha \in \mathfrak{g}$ and $k=1,\dots,s-i_0$, then
\begin{equation} \label{eq_iterated_adjoint}
\begin{split}
\left(\mathrm{ad}_Z\right)^k X^{i_0}_{\beta_0} & = \sum_{\abs*{I_k} \leq s-i_0} \sum_{\mathcal A_k(I_k)} \sum_{\mathcal B_k(i_0,I_k)} c(i_1,i_0)^{\beta_1}_{\alpha_1 \beta_0} \dots c(i_k,i_0+i_1+\dots+i_{k-1})^{\beta_k}_{\alpha_k \beta_{k-1}} x^{i_1}_{\alpha_1} \dots x^{i_k}_{\alpha_k} X^{i_0+\abs*{I_k}}_{\beta_k}.
\end{split}
\end{equation}
\end{lemma}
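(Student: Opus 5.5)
Induction on $k$ is the natural approach, with the structural constants \eqref{eq_structural_constants} supplying each step and the grading $[V_i,V_j]\subseteq V_{i+j}$ controlling the truncation. First, the base case $k=1$. Write $Z=\sum_{i_1=1}^s\sum_{\alpha_1=1}^{n_{i_1}} x^{i_1}_{\alpha_1}X^{i_1}_{\alpha_1}$ and expand by bilinearity:
\begin{equation*}
\mathrm{ad}_Z X^{i_0}_{\beta_0}=\sum_{i_1=1}^s\sum_{\alpha_1=1}^{n_{i_1}} x^{i_1}_{\alpha_1}\left[X^{i_1}_{\alpha_1},X^{i_0}_{\beta_0}\right].
\end{equation*}
If $i_1+i_0\le s$, then \eqref{eq_structural_constants} gives $[X^{i_1}_{\alpha_1},X^{i_0}_{\beta_0}]=\sum_{\beta_1=1}^{n_{i_0+i_1}}c(i_1,i_0)^{\beta_1}_{\alpha_1\beta_0}X^{i_0+i_1}_{\beta_1}$. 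If $i_1+i_0>s$, the bracket vanishes because it lies in $V_{i_0+i_1}=\{0\}$. For $i_1\neq i_0$ this follows from the Jacobi-derived inclusion $[V_i,V_j]\subseteq V_{i+j}$ recalled after \eqref{eq_Jacobi}. For $i_1=i_0$ one needs the standard fact $[V_i,V_j]\subseteq V_{i+j}$ for all $i,j$, proved by induction on $i$ using $V_{i}=[V_1,V_{i-1}]$ and Jacobi. I will note this explicitly; it is implicitly assumed in \eqref{eq_structural_constants} anyway. The surviving terms are exactly those with $|I_1|=i_1\le s-i_0$, with $\mathcal A_1(I_1)$ and $\mathcal B_1(i_0,I_1)$ as index sets, and this is \eqref{eq_iterated_adjoint} for $k=1$.

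For the inductive step, assume \eqref{eq_iterated_adjoint} holds for some $k<s-i_0$ and apply $\mathrm{ad}_Z$ to both sides. By linearity of $\mathrm{ad}_Z$, it acts on each vector $X^{i_0+|I_k|}_{\beta_k}$. Now apply the base-case computation with $i_0$ replaced by $i_0+|I_k|$ and $\beta_0$ replaced by $\beta_k$:
\begin{equation*}
\mathrm{ad}_Z X^{i_0+|I_k|}_{\beta_k}=\sum_{i_{k+1}\le s-i_0-|I_k|}\sum_{\alpha_{k+1}}\sum_{\beta_{k+1}=1}^{n_{i_0+|I_{k+1}|}} c(i_{k+1},i_0+|I_k|)^{\beta_{k+1}}_{\alpha_{k+1}\beta_k}x^{i_{k+1}}_{\alpha_{k+1}}X^{i_0+|I_{k+1}|}_{\beta_{k+1}}.
\end{equation*}
This is valid even when $i_0+|I_k|=s$. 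In that case the sum is empty, consistent with $[V_1,V_s]=\{0\}$ and the grading.

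Substituting this back, the multi-index $I_{k+1}=(I_k,i_{k+1})$ ranges over exactly $|I_{k+1}|\le s-i_0$. Because $i_{k+1}\ge1$, the constraint $|I_k|\le s-i_0$ is automatic and is absorbed. The sets $\mathcal A_{k+1}(I_{k+1})$ and $\mathcal B_{k+1}(i_0,I_{k+1})$ factor as products of the previous sets with the new index ranges. The product of structural constants and coordinates gains precisely the factor $c(i_{k+1},i_0+i_1+\dots+i_k)^{\beta_{k+1}}_{\alpha_{k+1}\beta_k}x^{i_{k+1}}_{\alpha_{k+1}}$, which yields \eqref{eq_iterated_adjoint} for $k+1$.

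There is no real analytic obstacle here. The only care needed is bookkeeping: reindexing the nested sums so that the constraint sets match the definitions in \eqref{eq_notation_multi-index}, and justifying the vanishing of out-of-range brackets (including the $i=j$ case of the grading) so that dropping those terms is legitimate.
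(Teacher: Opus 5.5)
Your proposal is correct and follows the same route as the paper: induction on $k$, with the base case from bilinearity and the structural constants \eqref{eq_structural_constants}, and the inductive step obtained by applying the base-case computation to $X^{i_0+\abs*{I_k}}_{\beta_k}$ and reindexing. You also justify explicitly why the brackets with $i_1+i_0>s$ vanish, including the $i=j$ case of $[V_i,V_j]\subseteq V_{i+j}$; the paper leaves this implicit when it writes $\sum_{i_1=1}^{s-i_0}$ directly.
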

\begin{proof}
We prove the formula by induction on $k$. A direct computation shows that
\begin{equation*}
\mathrm{ad}_Z X^{i_0}_{\beta_0}=\left[Z,X^{i_0}_{\beta_0} \right] = \sum_{i_1=1}^{s-i_0} \sum_{\alpha_1=1}^{n_{i_1}} x^{i_1}_{\alpha_1} \left[X^{i_1}_{\alpha_1}, X^{i_0}_{\beta_0} \right] \overset{\eqref{eq_structural_constants}}{=} \sum_{i_1=1}^{s-i_0} \sum_{\alpha_1=1}^{n_{i_1}} \sum_{{\beta_1}=1}^{n_{i_0+i_1}} c(i_1,i_0)^{\beta_1}_{\alpha_1 \beta_0} x^{i_1}_{\alpha_1} X^{i_0+i_1}_{\beta_1},
\end{equation*}
which is \eqref{eq_iterated_adjoint} with $k=1$. Then, assume $k>1$ and that the thesis holds for $k-1$. Hence, the inductive hypothesis and the base case ensure
\begin{equation*}
\begin{split}
& \left(\mathrm{ad}_Z\right)^k X^{i_0}_{\beta_0} = \Bigl[\underbrace{Z,\Bigl[Z,\dots,\Bigl[Z}_{k},X^{i_0}_{\beta_0} \Bigr]\dots \Bigr]\Bigr]\\
& = \sum_{\abs*{I_{k-1}} \leq s-i_0} \sum_{\mathcal A_{k-1}(I_{k-1})} \sum_{\mathcal B_{k-1}(i_0,I_{k-1})} c(i_1,i_0)^{\beta_1}_{\alpha_1 \beta_0} \dots c(i_{k-1},i_0+\dots+i_{k-2})^{\beta_{k-1}}_{\alpha_{k-1} \beta_{k-2}} x^{i_1}_{\alpha_1} \dots x^{i_{k-1}}_{\alpha_{k-1}} \mathrm{ad}_ZX^{i_0+\abs*{I_{k-1}}}_{\beta_{k-1}}\\
\overset{\eqref{eq_structural_constants}}&{=} \sum_{\abs*{I_k} \leq s-i_0} \sum_{\mathcal A_k(I_k)} \sum_{\mathcal B_k(i_0,I_k)} c(i_1,i_0)^{\beta_1}_{\alpha_1 \beta_0} \dots c(i_k,i_0+\dots+i_{k-1})^{\beta_k}_{\alpha_k \beta_{k-1}} x^{i_1}_{\alpha_1} \dots x^{i_k}_{\alpha_k} X^{i_0+\abs*{I_k}}_{\beta_k},
\end{split}
\end{equation*}
which is the thesis.
\end{proof}
From now on, whenever $k \in \mathbb{N}_+$, $A_k=(\alpha_1,\dots,\alpha_k) \in \mathcal A_k(I_k)$ and $B_k=(\beta_1,\dots,\beta_k)=(B_{k-1},\beta_k) \in \mathcal B_k(i,I_k)$, we may adopt the compact notation
\begin{equation} \label{eq_notation_multi-index_2}
\begin{split}
    x^{I_k}_{A_k} & \coloneqq x^{i_1}_{\alpha_1} \dots x^{i_k}_{\alpha_k} \qquad \text{for every $\left(x^1,\dots,x^s\right) \in \mathbb{G}$},\\
    X^{I_k}_{A_k} & \coloneqq X^{i_1}_{\alpha_1} \dots X^{i_k}_{\alpha_k},\\
    c(I_k,i)^{B_k}_{A_k B_{k-1}} & \coloneqq c(i_1,i-i_k-\dots-i_1)^{\beta_1}_{\alpha_1 \alpha} \dots c(i_k,i-i_k)^{\beta_k}_{\alpha_k \beta_{k-1}} \qquad \text{for every $\abs*{I_k} < i$, $\alpha=1,\dots n_i$,}\\
    c(i,I_k)^{B_k}_{B_{k-1} A_k} & \coloneqq c(i-i_k-\dots-i_1,i_1)^{\beta_1}_{\alpha \alpha_1} \dots c(i-i_k,i_k)^{\beta_k}_{\beta_{k-1} \alpha_k} \qquad \text{for every $\abs*{I_k} < i$, $\alpha=1,\dots n_i$.}
\end{split}
\end{equation}
\begin{theorem}
Let $i_0=1,\dots,s-1$ and $\beta_0=1,\dots,n_{i_0}$. Then
\begin{equation} \label{eq_right_vector_fields}
X^{i_0}_{\beta_0} = \left(X^{i_0}_{\beta_0}\right)^r + \sum_{k=1}^{s-i_0} \frac{1}{k!} \sum_{\abs*{I_k} \leq s-i_0} \sum_{\mathcal A_k(I_k)} \sum_{\mathcal B_k(i_0,I_k)} c(I_k,i_0+\abs*{I_k})^{B_k}_{A_k B_{k-1}} x^{I_k}_{A_k} \left(X^{i_0+\abs*{I_k}}_{\beta_k}\right)^r.
\end{equation}
Instead, if $i_0=s$, then $X^s_{\beta_0}=\left(X^s_{\beta_0}\right)^r$.
\end{theorem}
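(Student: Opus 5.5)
The plan is to compute the value of $X^{i_0}_{\beta_0}$ at a fixed point $p$ by conjugation, pulling it back to the identity and pushing it forward again with right translations, and then to expand the adjoint action through \eqref{eq_adjoint_exponential} and \eqref{eq_iterated_adjoint}. Fix $p\in\mathbb{G}$ and write $p=\exp(Z)$ with $Z=\sum_{i,\alpha}x^i_\alpha(p)X^i_\alpha$; this is possible because we use graded coordinates. Since $L_p=R_p\circ C_p$, where $C_p=L_p\circ R_{p^{-1}}$, and $C_p(e)=e$, the chain rule gives
\begin{equation*}
X^{i_0}_{\beta_0}(p)=(dL_p)_e\bigl(X^{i_0}_{\beta_0}(e)\bigr)=(dR_p)_e\bigl(\mathrm{Ad}_p X^{i_0}_{\beta_0}(e)\bigr).
\end{equation*}
Here we identify $\mathfrak{g}$ with $T_e\mathbb{G}$, so that $\mathrm{Ad}_p$ acts on $X^{i_0}_{\beta_0}(e)$.

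Next we write $\mathrm{Ad}_pX^{i_0}_{\beta_0}=\sum_{i,\gamma}a^i_\gamma(p)X^i_\gamma$ with real coefficients $a^i_\gamma(p)$. By linearity of $(dR_p)_e$ and the definition of right-invariant fields, the display above becomes $X^{i_0}_{\beta_0}(p)=\sum_{i,\gamma}a^i_\gamma(p)\,(X^i_\gamma)^r(p)$. It then remains to identify the coefficients. By \eqref{eq_adjoint_exponential}, $\mathrm{Ad}_p=e^{\mathrm{ad}_Z}=\sum_k (\mathrm{ad}_Z)^k/k!$.

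The series is finite. Since $[V_i,V_j]\subseteq V_{i+j}$, with $V_m=\{0\}$ for $m>s$, each application of $\mathrm{ad}_Z$ raises the layer index by at least one. Hence $(\mathrm{ad}_Z)^kX^{i_0}_{\beta_0}=0$ for $k>s-i_0$. In particular, if $i_0=s$, then $\mathrm{ad}_ZX^s_{\beta_0}=0$ because $[V_j,V_s]=\{0\}$, so $\mathrm{Ad}_pX^s_{\beta_0}=X^s_{\beta_0}$ and $X^s_{\beta_0}=(X^s_{\beta_0})^r$.

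For $i_0\le s-1$, we substitute \eqref{eq_iterated_adjoint} for each $k=1,\dots,s-i_0$. This produces the terms
\begin{equation*}
\tfrac{1}{k!}\,c(i_1,i_0)^{\beta_1}_{\alpha_1\beta_0}\cdots c(i_k,i_0+i_1+\dots+i_{k-1})^{\beta_k}_{\alpha_k\beta_{k-1}}\,x^{I_k}_{A_k}\,X^{i_0+\abs*{I_k}}_{\beta_k}.
\end{equation*}
The last step is a bookkeeping check against \eqref{eq_notation_multi-index_2}. Set $i=i_0+\abs*{I_k}$ and $\alpha=\beta_0$. The $j$-th factor in $c(I_k,i)^{B_k}_{A_kB_{k-1}}$ is $c(i_j,\,i-i_k-\dots-i_j)$, and $i-i_k-\dots-i_j=i_0+i_1+\dots+i_{j-1}$. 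So the constants coincide exactly with those above, and \eqref{eq_right_vector_fields} follows.

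The main point requiring care is the first step, namely the justification of the identity $X(p)=(\mathrm{Ad}_pX)^r(p)$. One must check that $\mathrm{Ad}_p$ is indeed the differential of $C_p=L_p\circ R_{p^{-1}}$ in the convention of the paper. One must also check that the coefficients $x^{I_k}_{A_k}$, although constant as elements of $\mathfrak{g}$ for fixed $Z$, are correctly read as the functions $x^{I_k}_{A_k}(p)$ once $p$ varies. Everything after that step is a formal substitution.
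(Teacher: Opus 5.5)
Your proposal is correct and follows the paper's proof essentially step for step. You factor $L_p=R_p\circ C_p$ to get $X^{i_0}_{\beta_0}(p)=(dR_p)_e\bigl(\mathrm{Ad}_pX^{i_0}_{\beta_0}(e)\bigr)$, expand $\mathrm{Ad}_{\exp Z}=e^{\mathrm{ad}_Z}$ with $Z=\log p$ (a finite sum by the grading), and substitute \eqref{eq_iterated_adjoint}. Your explicit check that the product of structure constants coincides with $c(I_k,i_0+\abs*{I_k})^{B_k}_{A_kB_{k-1}}$ is a detail the paper leaves implicit.
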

\begin{proof}
For any $p \in \mathbb{G}$, the identity $L_p = R_p\circ C_p$ holds. Therefore, the left-invariance of $X^{i_0}_{\beta_0}$ implies
\begin{equation*}
X^{i_0}_{\beta_0}(p)=\left(dL_p\right)_e\left(X^{i_0}_{\beta_0}(e)\right)=\left(dR_p\right)_e \left(\mathrm{Ad}_p X^{i_0}_{\beta_0}(e)\right).
\end{equation*}
Set $Z=\log(p)$. Then,
\begin{equation*}
    X^{i_0}_{\beta_0}(p) \overset{\eqref{eq_adjoint_exponential}}{=} \left(dR_p\right)_e \left(e^{\mathrm{ad}_Z} X^{i_0}_{\beta_0}(e)\right)= \sum_{k=0}^{s-i_0} \frac{1}{k!} \left(dR_p\right)_e \left(\left(\mathrm{ad}_Z\right)^k X^{i_0}_{\beta_0}(e)\right),
\end{equation*}
which allows us to conclude if $i_0=s$. If, instead, $i_0<s$, then
\begin{equation*}
\begin{split}
    X^{i_0}_{\beta_0}(p) \overset{\eqref{eq_iterated_adjoint}}&{=}  \left(dR_p\right)_e\left(X^{i_0}_{\beta_0}(e)\right) + \sum_{k=1}^{s-i_0} \frac{1}{k!} \sum_{\abs*{I_k} \leq s-i_0} \sum_{\mathcal A_k(I_k)} \sum_{\mathcal B_k(i_0,I_k)} c(I_k,i_0+\abs*{I_k})^{B_k}_{A_k B_{k-1}} x^{I_k}_{A_k}(p) \left(dR_p\right)_e \left(X^{i_0+\abs*{I_k}}_{\beta_k}(e)\right)\\
    & = \left(X^{i_0}_{\beta_0}\right)^r(p) + \sum_{k=1}^{s-i_0} \frac{1}{k!} \sum_{\abs*{I_k} \leq s-i_0} \sum_{\mathcal A_k(I_k)} \sum_{\mathcal B_k(i_0,I_k)}c(I_k,i_0+\abs*{I_k})^{B_k}_{A_k B_{k-1}} x^{I_k}_{A_k}(p) \left(X^{i_0+\abs*{I_k}}_{\beta_k}\right)^r(p),
\end{split}
\end{equation*}
which is the thesis.
\end{proof}
\begin{example} \label{ex_Heisenberg}
The Lie algebra of the $n$-th \emph{Heisenberg group} (see e.g.~\cite{BonLanUgu}) is generated by the vector fields
\begin{equation*}
X^1_\alpha =\partial_{x^1_\alpha}+2x^1_{\alpha+n} \partial_{x^2_1}, \qquad 
X^1_{\alpha+n} =\partial_{x^1_{\alpha+n}}-2x^1_\alpha \partial_{x^2_1}, \qquad X^2_1 =\partial_{x^2_1} \qquad \text{for every $\alpha=1,\ldots,n$.}
\end{equation*}
In this case, $s=2$. We have $c(1,1)^1_{\alpha \beta}=0$ if $\alpha<\beta$ and $\beta \neq \alpha+n$, and $c(1,1)^1_{\alpha,\alpha+n}=-4$ for every $\alpha=1,\dots,n$. Recall that $\left(X^2_1\right)^r=X^2_1$. Regarding $X^1_\alpha$ and $X^1_{\alpha+n}$, we obtain
\begin{equation*}
\begin{split}
    X^1_\alpha \overset{\eqref{eq_right_vector_fields}}&{=} \left(X^1_\alpha\right)^r +\sum_{\alpha_1=1}^{2n} c(1,1)^1_{\alpha_1,\alpha} x^1_{\alpha_1} X^2_1 = \left(X^1_\alpha\right)^r + c(1,1)^1_{\alpha+n,\alpha}  x^1_{\alpha+n} X^2_1 = \left(X^1_\alpha\right)^r + 4x^1_{\alpha+n} X^2_1,\\
    X^1_{\alpha+n} \overset{\eqref{eq_right_vector_fields}}&{=} \left(X^1_{\alpha+n}\right)^r +\sum_{\alpha_1=1}^{2n} c(1,1)^1_{\alpha_1,\alpha+n} x^1_{\alpha_1} X^2_1 = \left(X^1_{\alpha+n}\right)^r + c(1,1)^1_{\alpha,\alpha+n}  x^1_\alpha X^2_1 = \left(X^1_{\alpha+n}\right)^r - 4x^1_\alpha X^2_1.
\end{split}
\end{equation*}
\end{example}
\begin{example}
The Lie algebra of the \emph{Engel group} (see e.g.~\cite{BonLanUgu}) is generated by the vector fields
\begin{equation*}
\begin{split}
X^1_1 = \partial_{x^1_1}+\frac{x^1_2}{2}\partial_{x^2_1}+\left(\frac{x^2_1}{2}-\frac{x^1_1 x^1_2}{12}\right)\partial_{x^3_1}, \quad 
X^1_2 =\partial_{x^1_2}-\frac{x^1_1}{2}\partial_{x^2_1}+\frac{(x_1^1)^2}{12}\partial_{x^3_1}, \quad 
X^2_1 =\partial_{x^2_1}-\frac{x^1_1}{2}\partial_{x^3_1}, \quad X^3_1 =\partial_{x^3_1}.
\end{split}
\end{equation*}
In this case, $s=3$. The structural constants are zero except for $c(1,1)^1_{12}=-1$ and $c(1,2)^1_{11}=-1$. Recall that $\left(X^3_1\right)^r=X^3_1$. First, we compute $X^1_1$ and $X^1_2$ in terms of the right-invariant frame:
\begin{equation*}
\begin{split}
X^1_1 \overset{\eqref{eq_right_vector_fields}}&{=} \left(X^1_1\right)^r +\sum_{i_1=1}^2 \sum_{\alpha_1=1}^2 c(i_1,1)^1_{\alpha_1,1} x^{i_1}_{\alpha_1} \left(X^{1+i_1}_1\right)^r + \frac{1}{2} \sum_{\alpha_1=1}^2 \sum_{\alpha_2=1}^2 c(1,1)^1_{\alpha_1,1} c(1,2)^1_{\alpha_2,1} x^1_{\alpha_1 }x^1_{\alpha_2} X^3_1\\
& = \left(X^1_1\right)^r + c(1,1)^1_{21} x^1_2 \left(X^2_1\right)^r + c(2,1)^1_{11} x^2_1 X^3_1 + \frac{1}{2} c(1,1)^1_{21} c(1,2)^1_{11} x^1_2 x^1_1 X^3_1\\
& = \left(X^1_1\right)^r + x^1_2 \left(X^2_1\right)^r + x^2_1 X^3_1 - \frac{1}{2} x^1_1 x^1_2 X^3_1\\
& = \left(X^1_1\right)^r + x^1_2 \left(X^2_1\right)^r +\left(x^2_1-\frac{x^1_1 x^1_2}{2}\right)X^3_1,\\
X^1_2 \overset{\eqref{eq_right_vector_fields}}&{=} \left(X^1_2\right)^r +\sum_{i_1=1}^2 \sum_{\alpha_1=1}^2 c(i_1,1)^1_{\alpha_1,2} x^{i_1}_{\alpha_1} \left(X^{1+i_1}_1\right)^r + \frac{1}{2} \sum_{\alpha_1=1}^2 \sum_{\alpha_2=1}^2 c(1,1)^1_{\alpha_1,2} c(1,2)^1_{\alpha_2,1} x^1_{\alpha_1 }x^1_{\alpha_2} X^3_1\\
& = \left(X^1_2\right)^r + c(1,1)^1_{12} x^1_1 \left(X^2_1\right)^r + \frac{1}{2} c(1,1)^1_{12} c(1,2)^1_{11} \left(x^1_1\right)^2 X^3_1\\
& = \left(X^1_2\right)^r - x^1_1 \left(X^2_1\right)^r + \frac{\left(x^1_1\right)^2}{2} X^3_1.
\end{split}
\end{equation*}
Finally,
\begin{equation*}
\begin{split}
X^2_1 \overset{\eqref{eq_right_vector_fields}}{=} \left(X^2_1\right)^r + \sum_{\alpha_1=1}^2 c(1,2)^1_{\alpha_1,1} x^1_{\alpha_1} X^3_1 = \left(X^2_1\right)^r + c(1,2)^1_{11} x^1_1 X^3_1 = \left(X^2_1\right)^r - x^1_1 X^3_1.
\end{split}
\end{equation*}
\end{example}

\section{Contact vector fields} \label{sec_contact}
As stressed in the introduction, \emph{contact vector fields} constitute a natural class in the setting of stratified Lie groups. A smooth vector field $\bb \in \mathbb{X}(\mathbb{G})$ is a contact vector field if
\begin{equation} \label{eq_commutator_contact}
[\bb,V_1]_p \subseteq \hhh_p \qquad \text{for every $p\in\mathbb G$.}
\end{equation}
The interest in this class lies in the fact that contact vector fields are precisely those vector fields whose flows both preserve the horizontal distribution and are (locally) Lipschitz continuous in the metric space $(\mathbb G,d)$. Since we are interested in non-smooth vector fields, we notice that \eqref{eq_commutator_contact} is well-posed (in the weak sense) under mild regularity assumptions on $\bb$. For instance, when the components of $\bb$ have horizontal Sobolev regularity, then $[\bb,Z]$ is a well-defined vector field in $\mathbb X(\mathbb{G})$ for every $Z\in V_1$.
Accordingly, we say that $\bb\in \mathbb X(\mathbb{G})$ is a \emph{contact vector field with horizontal Sobolev regularity} if, for some $\theta\in [1,\infty]$,
\begin{equation}\label{def_contact_SObolev_reg}
    \bb=\sum_{i=1}^s\sum_{\alpha=1}^{n_i}b^i_\alpha X^i_\alpha,\qquad b^i_\alpha\in W^{1,\theta}_{\hhh,\mathrm{loc}}(\mathbb G),\qquad [\bb,V_1]_p\subseteq\hhh_p\,\text{ for a.e.~$p\in\mathbb G$.}
\end{equation}
The contact property in \eqref{def_contact_SObolev_reg} can be written as a family of compatibility conditions. Indeed, if $\beta=1,\ldots,n_1$,
\begin{equation*}
    \begin{split}
        \left[\bb,X^1_\beta\right]&=\sum_{k=1}^s\sum_{\alpha=1}^{n_k}b^k_\alpha\left[X^k_\alpha,X^1_\beta\right]-\sum_{k=1}^s\sum_{\gamma=1}^{n_k}\left(X^1_\beta b^k_\gamma\right)X^k_\gamma\\
        \overset{\eqref{eq_structural_constants}}&{=}\sum_{k=1}^{s-1}\sum_{\gamma=1}^{n_{k+1}}\sum_{\alpha=1}^{n_k}c(k,1)_{\alpha\beta}^\gamma \, b^k_\alpha\,X^{k+1}_\gamma-\sum_{k=2}^s\sum_{\gamma=1}^{n_k}\left(X^1_\beta b^k_\gamma\right)X^k_\gamma-\sum_{\gamma=1}^{n_1}\left(X^1_\beta b^1_\gamma\right)X^1_\gamma\\
        &=\sum_{k=2}^{s}\sum_{\gamma=1}^{n_{k}}\left(\sum_{\alpha=1}^{n_{k-1}}c(k-1,1)_{\alpha\beta}^\gamma \, b^{k-1}_\alpha-X^1_\beta b^k_\gamma\right)X^k_\gamma-\sum_{\gamma=1}^{n_1}\left(X^1_\beta b^1_\gamma\right)X^1_\gamma.
    \end{split}
\end{equation*}
In particular, \eqref{def_contact_SObolev_reg} is equivalent to
\begin{equation} \label{eq_contact_components_first}
X^1_\beta b^k_\gamma = \sum_{\alpha=1}^{n_{k-1}} c(k-1,1)^\gamma_{\alpha \beta} \, b^{k-1}_\alpha\qquad \text{for every $\beta=1,\dots,n_1$, $k=2,\dots,s$, $\gamma=1,\dots,n_k$.}
\end{equation}
By \eqref{eq_contact_components_first}, the regularity of $\bb$ improves in the following stratified way.
\begin{proposition}\label{rem_iterated_contact_components}
    Let $\bb$ satisfy \eqref{def_contact_SObolev_reg}. Then, for every $i=1,\ldots,s$,
    \begin{equation}\label{soboleveqreg}
        b^i_\alpha\in W^{i,\theta}_{\hhh,\mathrm{loc}}(\mathbb G)\qquad\text{for every $\alpha=1,\ldots,n_i$.}
    \end{equation}
\end{proposition}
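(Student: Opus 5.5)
We argue by induction on the layer index $i$, proving the statement simultaneously for all $\alpha=1,\dots,n_i$. Throughout we work locally: fix an open set $\Om\Subset\mathbb G$. Membership in $W^{\ell,\theta}_{\hhh,\mathrm{loc}}(\mathbb G)$ means that all iterated horizontal derivatives $X^1_{\alpha_1}\cdots X^1_{\alpha_j}f$, for $j\le \ell$, exist as distributions and belong to $L^\theta_{\mathrm{loc}}$.

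The base case $i=1$ is exactly the hypothesis $b^1_\alpha\in W^{1,\theta}_{\hhh,\mathrm{loc}}(\mathbb G)$ in \eqref{def_contact_SObolev_reg}.

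For the inductive step, assume $i\ge 2$ and that $b^{i-1}_\alpha\in W^{i-1,\theta}_{\hhh,\mathrm{loc}}(\mathbb G)$ for every $\alpha=1,\dots,n_{i-1}$. Fix $\gamma=1,\dots,n_i$. By hypothesis, $b^i_\gamma\in L^\theta_{\mathrm{loc}}$. The compatibility conditions \eqref{eq_contact_components_first} give, for every $\beta=1,\dots,n_1$,
\begin{equation*}
X^1_\beta b^i_\gamma=\sum_{\alpha=1}^{n_{i-1}} c(i-1,1)^\gamma_{\alpha\beta}\, b^{i-1}_\alpha,
\end{equation*}
as an identity between distributions, and in fact a.e., since both sides are $L^\theta_{\mathrm{loc}}$ functions. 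The right-hand side is a finite linear combination with constant coefficients of functions in $W^{i-1,\theta}_{\hhh,\mathrm{loc}}(\mathbb G)$, hence it lies in $W^{i-1,\theta}_{\hhh,\mathrm{loc}}(\mathbb G)$. Consequently, for every multi-index $(\beta,\alpha_1,\dots,\alpha_j)$ with $j\le i-1$, the distribution
\begin{equation*}
X^1_{\alpha_j}\cdots X^1_{\alpha_1}X^1_\beta b^i_\gamma=\sum_{\alpha=1}^{n_{i-1}} c(i-1,1)^\gamma_{\alpha\beta}\,X^1_{\alpha_j}\cdots X^1_{\alpha_1} b^{i-1}_\alpha
\end{equation*}
belongs to $L^\theta_{\mathrm{loc}}$. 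Here the first equality holds because distributional derivatives of equal distributions are equal, and the derivatives are defined by duality with $C^\infty_c$ test functions. All horizontal derivatives of $b^i_\gamma$ of order between $1$ and $i$ therefore lie in $L^\theta_{\mathrm{loc}}$, which gives $b^i_\gamma\in W^{i,\theta}_{\hhh,\mathrm{loc}}(\mathbb G)$ and completes the induction.

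The only delicate point is the second step: making sure that \eqref{eq_contact_components_first} legitimately holds in the distributional sense, so that further derivatives may be taken. We handle it by noting that the weak bracket $[\bb,X^1_\beta]$ is computed coefficientwise using the distributional derivatives $X^1_\beta b^k_\gamma\in L^\theta_{\mathrm{loc}}$. The a.e. identity in \eqref{def_contact_SObolev_reg} is therefore an a.e. identity between $L^\theta_{\mathrm{loc}}$ functions, which coincides with equality as distributions. No commutation of left-invariant fields is needed, since all iterated derivatives involved are horizontal and applied in the natural order.
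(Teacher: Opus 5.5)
Your proof is correct and follows the paper's argument. Both argue by induction on the layer, using the first-order compatibility conditions \eqref{eq_contact_components_first} to write $X^1_\beta b^i_\gamma$ as a constant-coefficient combination of the $b^{i-1}_\alpha$, which lie in $W^{i-1,\theta}_{\hhh,\mathrm{loc}}(\mathbb G)$ by the inductive hypothesis; your remarks on the distributional meaning of that identity, and on why every iterated horizontal derivative of order at most $i$ is covered, only make explicit what the paper leaves implicit.
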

\begin{proof} 
We argue by induction on $i$. The case $i=1$ is part of the assumption \eqref{def_contact_SObolev_reg}. Assume $i>1$ and that \eqref{soboleveqreg} holds for every $\alpha=1,\dots,n_{i-1}$. Then, for any $\beta=1,\ldots,n_1$ and $\gamma=1,\dots,n_i$,
\begin{equation*}
X^1_\beta b^i_\gamma \overset{\eqref{eq_contact_components_first}}{=} \sum_{\alpha=1}^{n_{i-1}} c(i-1,1)^\gamma_{\alpha \beta} b^{i-1}_\alpha \in W^{i-1,\theta}_{\hhh,\mathrm{loc}}(\mathbb{G}).
\end{equation*}
In particular, $b^i_\gamma \in W^{i,\theta}_{\hhh,\mathrm{loc}}(\mathbb{G})$, and the thesis follows.
\end{proof}
The improved regularity provided by \Cref{rem_iterated_contact_components} allows to enlarge the class of compatibility conditions.
\begin{proposition} \label{prop_contact_components}
  Let $\bb$ satisfy \eqref{def_contact_SObolev_reg}. Then
\begin{equation} \label{eq_contact_components}
X^j_\beta b^k_\gamma = \sum_{\alpha=1}^{n_{k-j}} c(k-j,j)^\gamma_{\alpha \beta} b^{k-j}_\alpha
\end{equation}
for every $j=1,\dots,s-1$, $\beta=1,\dots,n_j$, $k=j+1,\dots,s$ and $\gamma=1,\dots,n_k$.
\end{proposition}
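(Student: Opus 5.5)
Induction on $j$. The case $j=1$ is exactly \eqref{eq_contact_components_first}. For $j\geq 2$, since $V_j=[V_1,V_{j-1}]$, every $X^j_\beta$ is a linear combination of brackets $[X^1_\mu,X^{j-1}_\nu]$. Because both sides of \eqref{eq_contact_components} are linear in $X^j_\beta$ (the right-hand side through $c(k-j,j)^\gamma_{\alpha\beta}$), it suffices to prove the identity with $X^j_\beta$ replaced by $[X^1_\mu,X^{j-1}_\nu]$, namely
\begin{equation*}
[X^1_\mu,X^{j-1}_\nu]\, b^k_\gamma = \text{the $X^k_\gamma$-component of } \Bigl[\sum_{\alpha} b^{k-j}_\alpha X^{k-j}_\alpha,\,[X^1_\mu,X^{j-1}_\nu]\Bigr].
\end{equation*}
Here I use that $\sum_\alpha c(k-j,j)^\gamma_{\alpha\beta} b^{k-j}_\alpha$ is the $X^k_\gamma$-component of $[\bb^{k-j},X^j_\beta]$, where $\bb^{m}\coloneqq\sum_\alpha b^m_\alpha X^m_\alpha$.

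To apply iterated derivatives I need regularity. By \Cref{rem_iterated_contact_components}, $b^k_\gamma\in W^{k,\theta}_{\hhh,\mathrm{loc}}$ with $k\ge j+1\geq 2$. Hence $X^1_\mu X^{j-1}_\nu b^k_\gamma$ and $X^{j-1}_\nu X^1_\mu b^k_\gamma$ are meaningful as $L^\theta_{\mathrm{loc}}$ functions, at least once the inductive step shows $X^{j-1}_\nu b^k_\gamma$ is a combination of the $b^{k-j+1}$. Since $X^{j-1}_\nu$ is a linear combination of iterated brackets of horizontal fields of length $j-1$, it is a horizontal differential operator of order $j-1$. Distributionally, $[X^1_\mu,X^{j-1}_\nu]b=X^1_\mu X^{j-1}_\nu b - X^{j-1}_\nu X^1_\mu b$ holds for any $b\in L^1_{\mathrm{loc}}$, by transposition against test functions.

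Now compute the two terms. By the inductive hypothesis (case $j-1$), $X^{j-1}_\nu b^k_\gamma = \sum_{\alpha'} c(k-j+1,j-1)^\gamma_{\alpha'\nu} b^{k-j+1}_{\alpha'}$. Applying $X^1_\mu$ and \eqref{eq_contact_components_first} to $b^{k-j+1}_{\alpha'}$ gives $\sum_{\alpha',\alpha} c(k-j+1,j-1)^\gamma_{\alpha'\nu}\,c(k-j,1)^{\alpha'}_{\alpha\mu}\, b^{k-j}_\alpha$. For the other term, \eqref{eq_contact_components_first} gives $X^1_\mu b^k_\gamma=\sum_{\alpha'} c(k-1,1)^\gamma_{\alpha'\mu}b^{k-1}_{\alpha'}$. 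Then the inductive hypothesis, applied with $k-1\ge j$ (for $k-1=j-1$ this would fail, but $k\geq j+1$ ensures $k-1\geq j> j-1$), gives $X^{j-1}_\nu b^{k-1}_{\alpha'}=\sum_\alpha c(k-j,j-1)^{\alpha'}_{\alpha\nu}b^{k-j}_\alpha$. The difference of the two coefficient sums is exactly the $X^k_\gamma$-coefficient of
\begin{equation*}
\bigl[[X^{k-j}_\alpha,X^1_\mu],X^{j-1}_\nu\bigr]-\bigl[[X^{k-j}_\alpha,X^{j-1}_\nu],X^1_\mu\bigr],
\end{equation*}
which by the Jacobi identity \eqref{eq_Jacobi} equals $[X^{k-j}_\alpha,[X^1_\mu,X^{j-1}_\nu]]$, as required.

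The main obstacle is the bookkeeping. The Jacobi rearrangement must be matched correctly with the index conventions of \eqref{eq_structural_constants}, including the sign convention \eqref{eq_skew_structural_constants}. In addition, the induction must be formulated uniformly in $k$, so that the hypothesis is available for all pairs $(j-1,k')$ with $k'>j-1$. The regularity justification is routine given \Cref{rem_iterated_contact_components}: every intermediate derivative lands on some $b^m$ that is at least $L^\theta_{\mathrm{loc}}$, and the distributional commutator identity needs no regularity at all.
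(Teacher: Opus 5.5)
Your proposal is correct and follows essentially the same route as the paper. The paper also argues by induction on $j$, uniformly in $k$: it computes $X^1_\mu X^{j-1}_\nu b^k_\gamma$ and $X^{j-1}_\nu X^1_\mu b^k_\gamma$ via the inductive hypothesis and the first-order conditions, matches their difference to $[X^{k-j}_\alpha,[X^1_\mu,X^{j-1}_\nu]]$ through the Jacobi identity, and concludes because the brackets $[X^1_\mu,X^{j-1}_\nu]$ span $V_j$. The only difference is phrasing: the paper runs this last step as an orthogonality argument, while you use linearity on a spanning set, which is the same thing.
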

\begin{proof}
We argue by induction on $j$. The case $j=1$ is true by \eqref{eq_contact_components_first}. Assume $j>1$, and assume that \eqref{eq_contact_components} holds for $j-1$ and for any $\beta=1,\dots,n_{j-1}$, $k=j,\dots,s$ and $\gamma=1,\dots,n_k$.
Fix $\delta=1,\dots,n_1$, $
\beta=1,\dots,n_{j-1}$, $
k=j+1,\dots,s$, 
$
\gamma=1,\dots,n_k$.
By induction,
\begin{equation*}
X^{j-1}_\beta b^k_\gamma
=
\sum_{\alpha=1}^{n_{k-j+1}}
c(k-j+1,j-1)^\gamma_{\alpha\beta} \,
b^{k-j+1}_\alpha.
\end{equation*}
Therefore,
\begin{equation*}
X^1_\delta\left(X^{j-1}_\beta b^k_\gamma\right)
=
\sum_{\alpha=1}^{n_{k-j+1}}
c(k-j+1,j-1)^\gamma_{\alpha\beta} \,
X^1_\delta b^{k-j+1}_\alpha
\overset{\eqref{eq_contact_components_first}}{=}
\sum_{\alpha=1}^{n_{k-j+1}}
\sum_{\mu=1}^{n_{k-j}}
c(k-j+1,j-1)^\gamma_{\alpha\beta} \,
c(k-j,1)^\alpha_{\mu\delta} \,
b^{k-j}_\mu.
\end{equation*}
On the other hand, by \eqref{eq_contact_components_first},
\begin{equation*}
X^1_\delta b^k_\gamma
=
\sum_{\alpha=1}^{n_{k-1}}
c(k-1,1)^\gamma_{\alpha\delta} \,
b^{k-1}_\alpha.
\end{equation*}
Hence, by induction,
\begin{equation*}
X^{j-1}_\beta\left(X^1_\delta b^k_\gamma\right)
=
\sum_{\alpha=1}^{n_{k-1}}
c(k-1,1)^\gamma_{\alpha\delta} \,
X^{j-1}_\beta b^{k-1}_\alpha
=
\sum_{\alpha=1}^{n_{k-1}}
\sum_{\mu=1}^{n_{k-j}}
c(k-1,1)^\gamma_{\alpha\delta} \,
c(k-j,j-1)^\alpha_{\mu\beta} \,
b^{k-j}_\mu.
\end{equation*}
Combining the above equations,
\begin{equation*}
\left[X^1_\delta,X^{j-1}_\beta\right]b^k_\gamma
=
\sum_{\mu=1}^{n_{k-j}}
\left(
\sum_{\alpha=1}^{n_{k-j+1}}
c(k-j,1)^\alpha_{\mu\delta} \,
c(k-j+1,j-1)^\gamma_{\alpha\beta} \,
-
\sum_{\alpha=1}^{n_{k-1}}
c(k-j,j-1)^\alpha_{\mu\beta} \,
c(k-1,1)^\gamma_{\alpha\delta}
\right)b^{k-j}_\mu.
\end{equation*}
By the Jacobi identity \eqref{eq_Jacobi}, if $\mu=1,\ldots,n_{k-j}$,
\begin{equation*}
\left[\left[X^{k-j}_\mu,X^1_\delta\right],X^{j-1}_\beta\right]
-
\left[\left[X^{k-j}_\mu,X^{j-1}_\beta\right],X^1_\delta\right]
=
\left[X^{k-j}_\mu,\left[X^1_\delta,X^{j-1}_\beta\right]\right].
\end{equation*}
Applying \eqref{eq_structural_constants} to the above three terms, we obtain
\begin{align*}
\left[\left[X^{k-j}_\mu,X^1_\delta\right],X^{j-1}_\beta\right]
&=
\sum_{\alpha=1}^{n_{k-j+1}}
\sum_{\eta=1}^{n_k}
c(k-j,1)^\alpha_{\mu\delta} \,
c(k-j+1,j-1)^\eta_{\alpha\beta} \,
X^k_\eta,
\\
\left[\left[X^{k-j}_\mu,X^{j-1}_\beta\right],X^1_\delta\right]
&=
\sum_{\alpha=1}^{n_{k-1}}
\sum_{\eta=1}^{n_k}
c(k-j,j-1)^\alpha_{\mu\beta} \,
c(k-1,1)^\eta_{\alpha\delta} \,
X^k_\eta,
\\
\left[X^{k-j}_\mu,\left[X^1_\delta,X^{j-1}_\beta\right]\right]
&=
\sum_{\lambda=1}^{n_j}
\sum_{\eta=1}^{n_k}
c(1,j-1)^\lambda_{\delta\beta} \,
c(k-j,j)^\eta_{\mu\lambda} \,
X^k_\eta.
\end{align*}
Since $\left\{X^k_1,\dots,X^k_{n_k}\right\}$ is a basis for $V_k$, we conclude that 
\begin{equation*}
\sum_{\alpha=1}^{n_{k-j+1}}
c(k-j,1)^\alpha_{\mu\delta} \,
c(k-j+1,j-1)^\gamma_{\alpha\beta}
-
\sum_{\alpha=1}^{n_{k-1}}
c(k-j,j-1)^\alpha_{\mu\beta} \,
c(k-1,1)^\gamma_{\alpha\delta}
=
\sum_{\lambda=1}^{n_j}
c(1,j-1)^\lambda_{\delta\beta} \,
c(k-j,j)^\gamma_{\mu\lambda}.
\end{equation*}
In particular,
\begin{equation}\label{aux_improved_uno}
    \left[X^1_\delta,X^{j-1}_\beta\right]b^k_\gamma
=
\sum_{\lambda=1}^{n_j}
\sum_{\mu=1}^{n_{k-j}}
c(1,j-1)^\lambda_{\delta\beta} \,
c(k-j,j)^\gamma_{\mu\lambda} \,
b^{k-j}_\mu.
\end{equation}
On the other hand,
\begin{equation}\label{aux_improved_due}
\left[X^1_\delta,X^{j-1}_\beta\right]b^k_\gamma\overset{\eqref{eq_structural_constants}}{=}
\sum_{\lambda=1}^{n_j}
c(1,j-1)^\lambda_{\delta\beta} \, X^j_\lambda b^k_\gamma.
\end{equation}
Therefore, comparing \eqref{aux_improved_uno} and \eqref{aux_improved_due} and setting 
\begin{equation*}
    X=\sum_{\lambda=1}^{n_j}
\left(
X^j_\lambda b^k_\gamma
-
\sum_{\mu=1}^{n_{k-j}}
c(k-j,j)^\gamma_{\mu\lambda} \,
b^{k-j}_\mu
\right)X^j_\lambda,
\end{equation*}
we have 
\begin{equation*}
    \left\langle\left[X^1_\delta,X^{j-1}_\beta\right],X\right\rangle=\sum_{\lambda=1}^{n_j}
c(1,j-1)^\lambda_{\delta\beta}
\left(
X^j_\lambda b^k_\gamma
-
\sum_{\mu=1}^{n_{k-j}}
c(k-j,j)^\gamma_{\mu\lambda} \,
b^{k-j}_\mu
\right)=0
\end{equation*}
for every $\delta=1,\ldots,n_1$ and $\beta=1,\ldots,n_{j-1}$. This fact and \eqref{defcarnotgroup} grant that $X=0$, whence \eqref{eq_contact_components} follows.
\end{proof}
The forthcoming commutator estimates will require the following careful iterations of \eqref{eq_contact_components}.
\begin{proposition}\label{peodneicjecneficnecijn}
 Let $\bb$ satisfy \eqref{def_contact_SObolev_reg}. Let  $k \in \mathbb{N}_+$, $i=2,\dots,s$, $\abs*{I_k} < i$, $A_k \in \mathcal A_k(I_k)$, $\beta_k=1,\dots,n_i$. Then 
\begin{equation} \label{eq_iterated_contact_components}
X^{I_k}_{A_k} b^i_{\beta_k} = \sum_{\mathcal B_{k-1}(i-\abs*{I_k},I_{k-1})} \sum_{\alpha=1}^{n_{i-\abs*{I_k}}} c(i,I_k)_{B_{k-1} A_k}^{B_k} b^{i-\abs*{I_k}}_\alpha
\end{equation}
\end{proposition}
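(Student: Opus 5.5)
The plan is to prove \eqref{eq_iterated_contact_components} by induction on $k$, peeling off the innermost derivative $X^{i_k}_{\alpha_k}$ at each step. By \eqref{eq_notation_multi-index_2}, we have $X^{I_k}_{A_k}b^i_{\beta_k}=X^{I_{k-1}}_{A_{k-1}}\bigl(X^{i_k}_{\alpha_k}b^i_{\beta_k}\bigr)$, so \Cref{prop_contact_components} applies directly to the inner derivative. All identities are meant in the sense of distributions. Since every step only applies left-invariant vector fields to linear combinations with constant coefficients, no regularity issue arises. In any case, \Cref{rem_iterated_contact_components} guarantees that all the derivatives involved are locally $L^\theta$ functions, because the weighted order $\abs*{I_k}$ is strictly less than $i$.

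\emph{Base case $k=1$.} Here $I_1=(i_1)$ with $i_1<i$. The claim reads
\[
X^{i_1}_{\alpha_1}b^i_{\beta_1}=\sum_{\alpha=1}^{n_{i-i_1}}c(i-i_1,i_1)^{\beta_1}_{\alpha\alpha_1}\,b^{i-i_1}_\alpha .
\]
This is exactly \eqref{eq_contact_components} with $j=i_1$ and $k=i$. The sum over $\mathcal B_0$ is empty and should be read as trivial.

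\emph{Inductive step.} Assume the statement for $k-1\geq1$ and for every admissible level. Fix $I_k$ with $\abs*{I_k}<i$. Since $i_k<i$, \eqref{eq_contact_components} gives
\[
X^{i_k}_{\alpha_k}b^i_{\beta_k}=\sum_{\beta_{k-1}=1}^{n_{i-i_k}}c(i-i_k,i_k)^{\beta_k}_{\beta_{k-1}\alpha_k}\,b^{i-i_k}_{\beta_{k-1}} .
\]
Next, apply $X^{I_{k-1}}_{A_{k-1}}$ to both sides. The inductive hypothesis is used at level $i':=i-i_k$ with multi-index $I_{k-1}$. This is legitimate because $\abs*{I_{k-1}}=\abs*{I_k}-i_k<i'$, and $i'\geq \abs*{I_{k-1}}+1\geq 2$. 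It expresses $X^{I_{k-1}}_{A_{k-1}}b^{i'}_{\beta_{k-1}}$ as
\[
\sum_{\mathcal B_{k-2}(i'-\abs*{I_{k-1}},I_{k-2})}\sum_{\alpha=1}^{n_{i-\abs*{I_k}}}c(i',I_{k-1})^{B_{k-1}}_{B_{k-2}A_{k-1}}\,b^{i-\abs*{I_k}}_\alpha ,
\]
where we use that $i'-\abs*{I_{k-1}}=i-\abs*{I_k}$.

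It then remains to check the index bookkeeping.
\begin{itemize}
\item The outer sum over $\beta_{k-1}\in\{1,\dots,n_{i-i_k}\}$ together with the sum over $B_{k-2}\in\mathcal B_{k-2}(i-\abs*{I_k},I_{k-2})$ is exactly the sum over $B_{k-1}\in\mathcal B_{k-1}(i-\abs*{I_k},I_{k-1})$. This holds because the top index $\beta_{k-1}$ ranges up to $n_{i-\abs*{I_k}+i_1+\dots+i_{k-1}}=n_{i-i_k}$.
\item By the definition \eqref{eq_notation_multi-index_2}, the product $c(i-i_k,I_{k-1})^{B_{k-1}}_{B_{k-2}A_{k-1}}\,c(i-i_k,i_k)^{\beta_k}_{\beta_{k-1}\alpha_k}$ equals $c(i,I_k)^{B_k}_{B_{k-1}A_k}$. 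Indeed, the successive first arguments $i-\abs*{I_k}+i_1+\dots+i_{j-1}$ agree for $j\le k-1$, and the last factor is the missing one.
\end{itemize}
Exchanging the finite sums yields \eqref{eq_iterated_contact_components}.

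The only real obstacle I expect is this bookkeeping: matching the ranges of the $\mathcal B$-sets and the level arguments of the structure constants when passing from level $i$ to level $i-i_k$. I would verify it by writing out the chain of levels $i-\abs*{I_k},\,i-\abs*{I_k}+i_1,\,\dots,\,i-i_k,\,i$ explicitly.
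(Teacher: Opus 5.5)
Your proof is correct and is essentially the paper's argument: induction on $k$, with \eqref{eq_contact_components} as the only input. The one difference is the end you peel. The paper removes the outermost derivative $X^{i_1}_{\alpha_1}$ and applies the inductive hypothesis at the same level $i$ to the tail $(i_2,\dots,i_k)$, which requires relabeling the $\beta$'s. You remove the innermost $X^{i_k}_{\alpha_k}$ and apply the hypothesis at level $i-i_k$, so the $\mathcal B$-sets and the chains of structure constants concatenate directly; your index bookkeeping checks out.
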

\begin{proof}
We argue by induction on $k$. We know that
\begin{equation*}
X^{i_1}_{\alpha_1} b^i_{\beta_1} \overset{\eqref{eq_contact_components}}{=} \sum_{\alpha=1}^{n_{i-i_1}} c(i-i_1,i_1)^{\beta_1}_{\alpha \alpha_1} b^{i-i_1}_\alpha,
\end{equation*}
which is the thesis for $k=1$. Then, assume $k>1$ and that the thesis holds for $k-1$, so that
\begin{equation*}
\begin{split}
    X^{i_2}_{\alpha_2} \dots X^{i_k}_{\alpha_k} b^i_{\beta_k} & = \sum_{\substack{(\beta_2,\dots,\beta_{k-1}) \in \mathbb{N}^{k-2}\\1 \leq \beta_j \leq n_{i-i_{j+1}-\dots-i_k}}} \sum_{\beta_1=1}^{n_{i-i_2-\dots-i_k}} c(i-i_k-\dots-i_2,i_2)^{\beta_2}_{\beta_1 \alpha_2} \dots c(i-i_k,i_k)^{\beta_k}_{\beta_{k-1} \alpha_k} b^{i-i_2-\dots-i_k}_{\beta_1}\\
    & = \sum_{\mathcal B_{k-1}(i-\abs*{I_k},I_{k-1})} c(i-i_k-\dots-i_2,i_2)^{\beta_2}_{\beta_1 \alpha_2} \dots c(i-i_k,i_k)^{\beta_k}_{\beta_{k-1} \alpha_k} b^{i-i_2-\dots-i_k}_{\beta_1}.
\end{split}
\end{equation*}
Differentiating along $X^{i_1}_{\alpha_1}$, we finally obtain
\begin{equation*}
\begin{split}
    X^{I_k}_{A_k} b^i_{\beta_k} & = \sum_{\mathcal B_{k-1}(i-\abs*{I_k},I_{k-1})} c(i-i_k-\dots-i_2,i_2)^{\beta_1}_{\beta_1 \alpha_2} \dots c(i-i_k,i_k)^{\beta_k}_{\beta_{k-1} \alpha_k} X^{i_1}_{\alpha_1} b^{i-i_2-\dots-i_k}_{\beta_1}\\
    \overset{\eqref{eq_contact_components}}&{=} \sum_{\mathcal B_{k-1}(i-\abs*{I_k},I_{k-1})} \sum_{\alpha=1}^{n_{i-\abs*{{I_k}}}} c(i-\abs*{I_k},i_1)^{\beta_1}_{\alpha \alpha_1} c(i-i_k-\dots-i_2,i_2)^{\beta_1}_{\beta_1 \alpha_2} \dots c(i-i_k,i_k)^{\beta_k}_{\beta_{k-1} \alpha_k} b^{i-\abs*{I_k}}_\alpha\\
    & = \sum_{\mathcal B_{k-1}(i-\abs*{I_k},I_{k-1})} \sum_{\alpha=1}^{n_{i-\abs*{I_k}}} c(i,I_k)_{B_{k-1} A_k}^{B_k} b^{i-\abs*{I_k}}_\alpha,
\end{split}
\end{equation*}
which is the desired formula.
\end{proof}
\begin{remark}
    We proved that a vector field $\bb$ as in \eqref{def_contact_SObolev_reg} satisfies \eqref{eq_iterated_contact_components}. On the other hand, if    
    \begin{equation*}
    \bb=\sum_{i=1}^s\sum_{\alpha=1}^{n_i}b^i_\alpha X^i_\alpha,\qquad b^i_\alpha\in L^{\theta}_{\mathrm{loc}}(\mathbb G)
\end{equation*}
and \eqref{eq_iterated_contact_components} holds, clearly $\bb$ satisfies \eqref{def_contact_SObolev_reg}. Therefore, \eqref{def_contact_SObolev_reg}, \eqref{eq_contact_components_first}, \eqref{eq_contact_components} and \eqref{eq_iterated_contact_components} are actually equivalent.
\end{remark}
\subsection{Further remarks on contact vector fields}
\Cref{prop_contact_components} may suggest that the components along the last layer uniquely determine a contact vector field. Actually, this is not true, as the next example shows.
\begin{example}
Denote by $\hh^n$ the $n$-th Heisenberg group and consider the direct product of groups $\hh^n \times \rr$. Using the notation of \Cref{ex_Heisenberg} and denoting by $\partial_x$ the canonical vector field on $\rr$, we have that $\hh^n \times \rr$ is a stratified Lie group of step $2$, and its stratification is
\begin{equation*}
V_1=\spann\left\{X^1_1,\dots,X^1_{2n},\partial_x\right\}, \qquad V_2=\spann\left\{X^2_1\right\}.
\end{equation*}
In this setting, a contact vector field $\bb$ decomposes as the sum of a contact vector field on $\hh^n$ and a vector field on $\rr$. In particular, the component of $\bb$ along $\partial_x$ is arbitrary.
\end{example}
Anyway, the orthogonal projection onto the \emph{center} of $\mathfrak{g}$ uniquely determines a contact vector field. Recall that the center $\mathcal{Z}(\mathfrak{g})$ of $\mathfrak{g}$ is the subspace of vector fields $Z \in \mathfrak{g}$ such that $[Z,X]=0$ for every $X \in \mathfrak{g}$.
\begin{proposition}
Let $\bb,\bar{\bb} \in \mathbb{X}(\mathbb{G})$ satisfy \eqref{def_contact_SObolev_reg}. Assume that $\left\langle \bb,\mathcal{Z}(\mathfrak{g})\right\rangle= \left\langle \bar\bb,\mathcal{Z}(\mathfrak{g})\right\rangle$ a.e.~on $\mathbb G$. 
Then $\bb=\bar \bb$.
\end{proposition}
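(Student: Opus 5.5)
By linearity it suffices to treat the difference. The contact condition \eqref{def_contact_SObolev_reg} is linear in $\bb$, so $\mathbf{c}\coloneqq\bb-\bar\bb$ satisfies \eqref{def_contact_SObolev_reg} and $\langle \mathbf{c},\mathcal Z(\mathfrak g)\rangle=0$ a.e. We must show $\mathbf{c}=0$. Write $c^i_\alpha$ for its components. We will show by downward induction on the layer index $i=s,s-1,\dots,1$ that $\mathbf{c}_i\coloneqq\sum_\alpha c^i_\alpha X^i_\alpha$ vanishes a.e.

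Fix $i$ and assume $\mathbf{c}_k=0$ a.e. for every $k>i$. By \eqref{eq_contact_components}, for every $j\ge1$ with $i+j\le s$, every $\beta=1,\dots,n_j$ and every $\gamma=1,\dots,n_{i+j}$,
\begin{equation*}
0=X^j_\beta c^{i+j}_\gamma=\sum_{\alpha=1}^{n_i}c(i,j)^\gamma_{\alpha\beta}\,c^i_\alpha \qquad\text{a.e.}
\end{equation*}
The left-hand side vanishes because $c^{i+j}_\gamma=0$ a.e., so its distributional derivative is zero. By \eqref{eq_structural_constants}, this says that $\bigl[\mathbf{c}_i(p),X^j_\beta\bigr]=0$ in $\mathfrak g$ for a.e. $p$, viewing $\mathbf{c}_i(p)=\sum_\alpha c^i_\alpha(p)X^i_\alpha\in V_i$ as a fixed element of the Lie algebra. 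When $i+j>s$ the bracket vanishes automatically, because $[V_i,V_j]\subseteq V_{i+j}=\{0\}$; this holds for $i\ne j$ by the paper's remark and for $i=j$ by the grading of a stratified algebra. Since these brackets vanish for all $j$ and $\beta$, we get $\mathbf{c}_i(p)\in\mathcal Z(\mathfrak g)$ for a.e. $p$.

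Next we use the hypothesis. Because $\mathcal Z(\mathfrak g)$ is a graded subspace, $\mathcal Z(\mathfrak g)=\bigoplus_k\bigl(\mathcal Z(\mathfrak g)\cap V_k\bigr)$: if $[Z,\cdot]=0$, each homogeneous part of $Z$ is also central, since brackets respect degrees. The layers are mutually orthogonal, so for any $Z\in\mathcal Z(\mathfrak g)\cap V_i$ we have $\langle \mathbf{c},Z\rangle=\langle \mathbf{c}_i,Z\rangle$. Taking $Z=\mathbf{c}_i(p)$, or more precisely letting $Z$ range over an orthonormal basis of $\mathcal Z(\mathfrak g)\cap V_i$, gives $|\mathbf{c}_i|^2=0$ a.e., so $\mathbf{c}_i=0$ a.e.

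The base case $i=s$ needs no assumption, since $V_s\subseteq\mathcal Z(\mathfrak g)$ by \eqref{defcarnotgroup}. The induction therefore runs down to $i=1$, and $\bb=\bar\bb$.

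The step needing most care is the passage from the componentwise identities to membership in the center. One has to check that $\mathbf{c}_i(p)$ commutes with every $X^j_\beta$ for all $j$, not only with horizontal ones, and for every layer including those with $i+j>s$. This is exactly why we invoke the full \Cref{prop_contact_components} rather than only \eqref{eq_contact_components_first}. Alternatively, commuting with $V_1$ alone already forces centrality, by the Jacobi identity and the fact that $V_1$ generates $\mathfrak g$, so \eqref{eq_contact_components_first} would in fact suffice. The remaining subtlety is the graded decomposition of the center, which is needed to split the orthogonality hypothesis layer by layer.
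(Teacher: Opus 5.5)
Your proof is correct and follows essentially the same route as the paper: downward induction on the layers, where the compatibility conditions and the vanishing of the higher layers make the frozen vector $\sum_\alpha c^i_\alpha(p)X^i_\alpha$ central, after which the hypothesis on $\mathcal Z(\mathfrak g)$ forces it to vanish. The differences are cosmetic. The paper brackets only with $V_1$ (your alternative remark) and works in an orthonormal basis of each $V_k$ adapted to $\mathcal Z(\mathfrak g)\cap V_k$, whereas you bracket with all layers via \Cref{prop_contact_components} and conclude from $|\mathbf{c}_i|^2=0$; your version in fact never uses the graded decomposition of the center, since $\mathcal Z(\mathfrak g)\cap V_i\subseteq\mathcal Z(\mathfrak g)$ already suffices.
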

\begin{proof}
First, notice that $$\mathcal{Z}(\mathfrak{g})=\mathcal{Z}(\mathfrak{g})\cap V_1\oplus\cdots\oplus \mathcal{Z}(\mathfrak{g})\cap V_s.$$ It is clear that $\mathcal{Z}(\mathfrak{g})\cap V_1\oplus\cdots\oplus \mathcal{Z}(\mathfrak{g})\cap V_s \subseteq\mathcal{Z}(\mathfrak{g})$. Conversely, let $X\in \mathcal{Z}(\mathfrak{g})$, and decompose it uniquely along $V_1,\ldots,V_s$ as $X=X_1+\cdots+ X_s$. Fix $i=1,\ldots,s$ and $Y\in V_i$. Then 
\begin{equation*}
    0=[X,Y]=\sum_{j=1}^s[X_j,Y].
\end{equation*}
Noticing that $\left\langle[X_j,Y],[X_k,Y]\right\rangle=0$ for every $j,k=1,\ldots,s$ and $j\neq k$, we get $[X_j,Y]=0$ for every $j=1,\ldots,s$, whence $X\in \mathcal{Z}(\mathfrak{g})\cap V_1\oplus\cdots\oplus \mathcal{Z}(\mathfrak{g})\cap V_s$. 
Therefore, up to changing and rearranging the basis, we can assume that $\left\{X^i_1,\dots,X^i_{m_i}\right\}$ is a basis for $\mathcal{Z}(\mathfrak{g}) \cap V_i$ for every $i=1,\dots,s$ and a suitable $0 \leq m_i \leq n_i$. By linearity, we can suppose $\bar \bb=0$. Hence, if $\bb= \sum_{i=1}^s \sum_{\alpha=1}^{n_i} b^i_\alpha X^i_\alpha$, we have $b^i_\alpha=0$ for every $i=1,\dots,s$ and $\alpha=1,\dots,m_i$. It suffices to prove that $b^k \coloneqq \left(b^k_1,\dots,b^k_{n_k}\right)=0$ for every $k=1,\dots,s$. We prove this fact by reverse induction on $k$. Since $V_s \subseteq \mathcal{Z}(\mathfrak{g})$ by definition of stratification, we have $b^s=0$ by assumption. Hence, assume $k<s$ and $b^{k+1}=0$. Fix $p \in \mathbb{G}$ and define the vector field $B=\sum_{\alpha=1}^{n_k} b^k_\alpha(p) X^k_\alpha \in \mathfrak{g}$. Then, we deduce
\begin{equation*}
\left[B,X^1_\beta \right] \overset{\eqref{eq_structural_constants}}{=} \sum_{\gamma=1}^{n_{k+1}} \sum_{\alpha=1}^{n_k} c(k,1)^\gamma_{\alpha \beta} \, b^k_\alpha(p) X^{k+1}_\gamma \overset{\eqref{eq_contact_components}}{=} \sum_{\gamma=1}^{n_{k+1}} \left(X^1_\beta b^{k+1}_\gamma\right) X^{k+1}_\gamma =0
\end{equation*}
for every $\beta=1,\dots,n_1$, so that $B \in \mathcal{Z}(\mathfrak{g})$. On the other hand, we know that $B=\sum_{\alpha=m_i+1}^{n_i} b^k_\alpha(p) X^k_\alpha$ with $X^k_\alpha \notin \mathcal{Z}(\mathfrak{g})$. Hence, the only possibility is $B=0$. This is the thesis, since $p \in \mathbb{G}$ was arbitrary.
\end{proof}
On the other hand, it is not true that all the central components are free. Although this is well-known from the theory of Tanaka prolongation (see \cite{MR266258}), we include the following example for the sake of completeness.
\begin{example}
Consider $\mathbb{F}_{32}$, i.e., the $6$-dimensional stratified Lie group of step $2$ such that
\begin{equation*}
V_1=\spann\{X_1,X_2,X_3\}, \qquad V_2=\spann\{Y_{12},Y_{13},Y_{23}\},\qquad [X_i,X_j]=Y_{ij}\text{ for every $i,j=1,2,3$, $i \neq j$.}
\end{equation*}
Let $\bb=f_1 X_1+f_2 X_2+f_3 X_3+f_{12}Y_{12}+f_{13}Y_{13}+f_{23}Y_{23} \in \mathbb{X}(\mathbb{F}_{32})$ be a smooth contact vector field. A direct computation shows that, for every $i=1,2,3$,
\begin{equation*}
[X_i,\bb]=(X_i f_1) X_1+(X_i f_2) X_2+(X_i f_3) X_3+(X_i f_{12}) Y_{12}+(X_i f_{13}) Y_{13}+(X_i f_{23}) Y_{23}+f_1 Y_{i1}+f_2 Y_{i2}+f_3 Y_{i3},
\end{equation*}
where we use the convention $Y_{ij}=0$ whenever $i=j$. By \eqref{eq_commutator_contact}, the components of $[X_i,\bb]$ along $Y_{12},Y_{13},Y_{23}$ must be zero. In particular,
\begin{equation*}
X_1 f_{23}=X_2 f_{13}=X_3 f_{12}=0,
\end{equation*}
which means that $f_{12},f_{13},f_{23}$ are not arbitrary.
\end{example}
\section{Difference quotients of horizontal Sobolev functions} \label{sec_difference_quotients}
It is well known that Euclidean Sobolev functions are characterized by the convergence (in the suitable weak sense) of their difference quotients. In this section, we prove the analogous result for horizontal Sobolev functions in Carnot groups. This will be crucial for the regularization property of the next section. We stress that we deal with difference quotients computed along intrinsic dilations, involving certain polynomials which will turn out to be Taylor polynomials in the sense of \cite{MR0657581,BonLanUgu}. Throughout this section, $\Omega \subseteq \mathbb{G}$ is a fixed open set. Moreover, whenever $k \in \mathbb{N}$, $f \in C^k(\Omega)$ and $Z \in \mathfrak{g}$, we adopt the notation
\begin{equation*}
Z^k f = \underbrace{Z \dots Z}_{k} f.
\end{equation*}
First, we recall the following two results. See \cite[Lemma 2.12.1]{MR746308} for the proof of the first one.
\begin{lemma}
Let $k \in \mathbb{N}$. Let $f \in C^k(\Omega)$. Fix $p \in \Omega$, $t>0$ and $Z \in \mathfrak{g}$. Assume that $p \cdot \exp(tZ) \in \Omega$. Then
\begin{equation} \label{eq_iterated_derivative}
\frac{d^k}{dt^k} f(p \cdot \exp(tZ)) = Z^k f (p \cdot \exp(tZ)).
\end{equation}
In particular,
\begin{equation} \label{eq_iterated_derivative_zero}
\frac{d^k}{dt^k}\bigg\rvert_{t=0} f(p \cdot \exp(tZ)) = Z^k f(p).
\end{equation}
\end{lemma}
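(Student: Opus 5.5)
The plan is to reduce everything to the one-parameter curve $\gamma(t)\coloneqq p\cdot\exp(tZ)$ and prove \eqref{eq_iterated_derivative} by induction on $k$. The only structural input needed is that, in a Lie group, the flow of a left-invariant vector field $Z$ through $q$ is $t\mapsto q\cdot\exp(tZ)$. This holds because $t\mapsto\exp(tZ)$ is the integral curve of $Z$ through $e$: it is a one-parameter subgroup, a standard consequence of the definition of $\exp$ via integral curves together with left-invariance. Left translation $L_q$ maps integral curves of $Z$ to integral curves of $Z$, since $(dL_q)Z=Z\circ L_q$. Consequently $\dot\gamma(t)=Z(\gamma(t))$ for every $t$ with $\gamma(t)\in\Omega$.

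For the case $k=0$ there is nothing to prove. For $k=1$, the chain rule gives
\[
\frac{d}{dt}f(\gamma(t))=df_{\gamma(t)}(\dot\gamma(t))=df_{\gamma(t)}\bigl(Z(\gamma(t))\bigr)=(Zf)(\gamma(t)),
\]
which is valid as soon as $f\in C^1(\Omega)$ and $\gamma(t)\in\Omega$.

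For the inductive step, assume the formula for $k-1$ holds for all functions of class $C^{k-1}$. If $f\in C^k(\Omega)$, then $Z^{k-1}f\in C^1(\Omega)$, because $Z$ has smooth coefficients (polynomial ones, in graded coordinates). Hence
\[
\frac{d^k}{dt^k}f(\gamma(t))=\frac{d}{dt}\Bigl[(Z^{k-1}f)(\gamma(t))\Bigr]=Z\bigl(Z^{k-1}f\bigr)(\gamma(t))=Z^kf(\gamma(t)),
\]
where the second equality is the case $k=1$ applied to $Z^{k-1}f$. Evaluating at $t=0$, where $\gamma(0)=p$, gives \eqref{eq_iterated_derivative_zero}.

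The only point requiring care is the domain. The identity is used at times $t$ near which $\gamma$ stays in the open set $\Omega$. Since $\Omega$ is open and $\gamma$ is continuous, each such $t$ has a neighbourhood on which $\gamma\in\Omega$, so the derivatives are well defined there. If $p\cdot\exp(\tau Z)\in\Omega$ is wanted for all $\tau\in[0,t]$, one simply restricts to the connected component of $\{\tau:\gamma(\tau)\in\Omega\}$ containing $0$. I expect no real obstacle. The mildly delicate step is justifying $\dot\gamma=Z\circ\gamma$, which rests only on left-invariance of $Z$ and on $\exp(sZ)\exp(tZ)=\exp((s+t)Z)$.
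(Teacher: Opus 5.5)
Your proposal is correct and is the standard argument; the paper gives no proof of its own and simply cites Varadarajan, Lemma 2.12.1, which follows the same route. That route is: identify $t\mapsto p\cdot\exp(tZ)$ as the integral curve of the left-invariant field $Z$ through $p$, apply the chain rule for $k=1$, and induct using $Z^{k-1}f\in C^1(\Omega)$. Your handling of the domain through the openness of $\Omega$ is fine, and it also shows that the restriction $t>0$ in the statement is unnecessary.
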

As an immediate consequence of \eqref{eq_iterated_derivative_zero}, observe that, by the Baker-Campbell-Hausdorff formula (cf.~\cite{MR746308}), for every $i,j=1,\dots,s$, $\alpha=1,\dots,n_i$ and $\beta=1,\dots,n_j$,
\begin{equation} \label{eq_derivatives_coordinates}
X^i_\alpha x^i_\alpha = 1, \qquad X^i_\alpha x^i_\beta = 0 \quad \text{if $\alpha \neq \beta$,} \qquad X^i_\alpha x^j_\beta = 0 \quad \text{if $j<i$.}
\end{equation}
\begin{proposition}
Let $k \in \mathbb{N}_+$ and $f \in C^{k+1}(\Omega)$. Fix $p \in \Omega$ and $W=\displaystyle \sum_{i=1}^s \sum_{\alpha=1}^{n_i} w^i_\alpha X^i_\alpha \in \mathfrak{g}$. Then
\begin{equation} \label{eq_Taylor_coefficients}
\frac{d^k}{d\tau^k}\bigg\rvert_{\tau=0} f(p \cdot \exp(\delta_\tau W)) = k! \sum_{j=1}^k \frac{1}{j!} \sum_{\substack{i_1,\dots,i_j=1 \\ i_1+\dots+i_j=k}}^s \sum_{\alpha_1=1}^{n_{i_1}} \dots \sum_{\alpha_j=1}^{n_{i_j}} w^{i_1}_{\alpha_1} \dots w^{i_j}_{\alpha_j} X^{i_1}_{\alpha_1} \dots X^{i_j}_{\alpha_j}f(p).
\end{equation}
\end{proposition}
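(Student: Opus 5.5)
Write $\delta_\tau W=\sum_i \tau^i W_i$ with $W_i=\sum_{\alpha}w^i_\alpha X^i_\alpha\in V_i$, and set $g(\tau)=f(p\cdot\exp(\delta_\tau W))$. The natural approach is to compare $g$ with a function of several independent real variables in which the graded pieces are decoupled. So I would introduce
\begin{equation*}
G(t_1,\dots,t_s)\coloneqq f\Bigl(p\cdot \exp\Bigl(\sum_{i=1}^s t_i W_i\Bigr)\Bigr),
\end{equation*}
so that $g(\tau)=G(\tau,\tau^2,\dots,\tau^s)$. The plan is then to (a) compute the Taylor expansion of $G$ at $0$ in terms of the symmetrized products of the $W_i$, and (b) substitute $t_i=\tau^i$ and collect the coefficient of $\tau^k$.

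For step (a), I would use \eqref{eq_iterated_derivative_zero} applied to $Z=\sum_i t_iW_i$ at fixed $(t_1,\dots,t_s)$. The function $\lambda\mapsto f(p\cdot\exp(\lambda Z))$ has $j$-th derivative at $\lambda=0$ equal to $Z^jf(p)=\bigl(\sum_i t_iW_i\bigr)^j f(p)$. Since $f\in C^{k+1}$, $G$ is $C^k$ near $0$, and its Taylor polynomial of order $k$ is determined by its restrictions to lines through the origin. Hence the homogeneous degree-$j$ part (in $t$) of the Taylor polynomial of $G$ at $0$ is
\begin{equation*}
\frac{1}{j!}\Bigl(\sum_{i=1}^s t_iW_i\Bigr)^j f(p)=\frac1{j!}\sum_{i_1,\dots,i_j=1}^s t_{i_1}\cdots t_{i_j}\,W_{i_1}\cdots W_{i_j}f(p).
\end{equation*}
Here the noncommutative product is expanded, not symmetrized, and this is fine because we only expand a power of a single operator.

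For step (b), $G$ has a Taylor expansion $G(t)=\sum_{j\le k}(\text{degree } j\text{ part})+o(|t|^k)$. Composing with $t=(\tau,\dots,\tau^s)$, the remainder is $o(\tau^k)$. Each monomial $t_{i_1}\cdots t_{i_j}$ becomes $\tau^{i_1+\dots+i_j}$, so the coefficient of $\tau^k$ in $g$ is
\begin{equation*}
\sum_{j=1}^k\frac1{j!}\sum_{i_1+\dots+i_j=k}W_{i_1}\cdots W_{i_j}f(p).
\end{equation*}
Monomials with $j>k$ have $\tau$-degree greater than $k$ and can be dropped. Monomials with $j\le k$ but $\tau$-degree greater than $k$ do not contribute to $\tau^k$. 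Since $g$ is $C^k$, we have $g^{(k)}(0)=k!\cdot$(coefficient of $\tau^k$). Expanding $W_{i_m}=\sum_{\alpha_m}w^{i_m}_{\alpha_m}X^{i_m}_{\alpha_m}$ then gives \eqref{eq_Taylor_coefficients}.

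The main subtlety is justifying that the Taylor polynomial of $G$ is read off from the one-variable derivatives along rays. Concretely, the degree-$j$ Taylor term of $G$ equals $\frac{1}{j!}\frac{d^j}{d\lambda^j}|_{\lambda=0}G(\lambda t)$, which is the standard identity $\frac{d^j}{d\lambda^j}G(\lambda t)|_0=D^jG(0)[t,\dots,t]$, valid for $G\in C^k$ and $j\le k$. Combining this with \eqref{eq_iterated_derivative_zero} settles step (a).

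Regularity only needs $f\in C^k$ for these steps; the assumed $C^{k+1}$ gives extra room, for instance to control the remainder uniformly via a Lagrange form. One also needs $p\cdot\exp(\delta_\tau W)\in\Omega$ for small $\tau$, which follows from openness of $\Omega$.
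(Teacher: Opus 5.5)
Your proof is correct. It shares the paper's core mechanism: the order-$j$ term of $f(p\cdot\exp(Z))$ in $Z$ is $\frac{1}{j!}Z^jf(p)$; one substitutes $Z=\delta_\tau W=\sum_i\tau^iW_i$, expands the noncommutative power, and reads off the coefficient of $\tau^k$ by uniqueness of Taylor coefficients. The difference lies in how the remainder is controlled.

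The paper fixes $\tau$ and applies the one-variable Taylor formula with Lagrange remainder to $t\mapsto f(p\cdot\exp(t\,\delta_\tau W))$ at $t=1$. Its remainder is $\frac{1}{(k+1)!}(\delta_\tau W)^{k+1}f(p\cdot\exp(\sigma\,\delta_\tau W))$. This is $O(\tau^{k+1})$ because every monomial of $(\delta_\tau W)^{k+1}$ carries a factor $\tau^{i_1+\dots+i_{k+1}}$ with $i_1+\dots+i_{k+1}\ge k+1$, while the $(k+1)$-fold derivatives of $f$ stay bounded near $p$. That route is pure one-variable calculus, but it consumes the hypothesis $f\in C^{k+1}(\Omega)$.

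Your route lifts to the $s$-variable function $G$; the paper's ray at fixed $\tau$ is exactly $t\mapsto G(t\tau,t\tau^2,\dots,t\tau^s)$. You then use the several-variable Peano remainder $o(|t|^k)$ together with the identity $D^jG(0)[t,\dots,t]=\frac{d^j}{d\lambda^j}\big\rvert_{\lambda=0}G(\lambda t)$. Since $|(\tau,\dots,\tau^s)|\le C\tau$ for small $\tau$, the remainder is $o(\tau^k)$ along the curve. So your argument shows the formula already holds for $f\in C^k(\Omega)$, at the price of invoking the multivariable Taylor theorem instead of an explicit remainder bound. Your closing remark that $C^{k+1}$ is only extra room is therefore accurate.
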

\begin{proof}
Since $\Omega$ is open, there exists $\tau>0$ such that $p \cdot \exp(t\delta_\tau W) \in \Omega$ for every $t \in [0,1]$. Set $Z=\delta_\tau W$. Then, thanks to \eqref{eq_iterated_derivative}, \eqref{eq_iterated_derivative_zero} and Taylor's formula with Lagrange remainder,
\begin{equation*}
f(p \cdot \exp(tZ))=\sum_{j=0}^k \frac{Z^j f(p)}{j!} t^j + \frac{Z^{k+1}f(p \cdot \exp(\sigma Z))}{(k+1)!}t^{k+1}
\end{equation*}
for a suitable $\sigma \in [0,t]$. In particular, evaluating at $t=1$,
\begin{equation*}
f(p \cdot \exp(Z))=\sum_{j=0}^k \frac{Z^j f(p)}{j!} + \frac{Z^{k+1}f(p \cdot \exp(\sigma Z))}{(k+1)!}.
\end{equation*}
Hence, noticing that, for every $j \geq 1$,
\begin{equation*}
Z^j=(\delta_\tau W)^j=\left(\sum_{i=1}^s \sum_{\alpha=1}^{n_i} \tau^i w^i_\alpha X^i_\alpha\right)^j=\sum_{i_1,\dots,i_j=1}^s \sum_{\alpha_1=1}^{n_{i_1}} \dots \sum_{\alpha_j=1}^{n_{i_j}} \tau^{i_1+\dots+i_j} w^{i_1}_{\alpha_1} \dots w^{i_j}_{\alpha_j} X^{i_1}_{\alpha_1} \dots X^{i_j}_{\alpha_j},
\end{equation*}
we obtain
\begin{equation*}
\begin{split}
    f(p & \cdot \exp(\delta_\tau W)) = f(p) + \sum_{j=1}^k \frac{1}{j!} \sum_{i_1,\dots,i_j=1}^s \sum_{\alpha_1=1}^{n_{i_1}} \dots \sum_{\alpha_j=1}^{n_{i_j}} \tau^{i_1+\dots+i_j} w^{i_1}_{\alpha_1} \dots w^{i_j}_{\alpha_j} X^{i_1}_{\alpha_1} \dots X^{i_j}_{\alpha_j}f(p) \\ & + \underbrace{\frac{1}{(k+1)!} \sum_{i_1,\dots,i_{k+1}=1}^s \sum_{\alpha_1=1}^{n_{i_1}} \dots \sum_{\alpha_{k+1}=1}^{n_{i_{k+1}}} \tau^{i_1+\dots+i_{k+1}} w^{i_1}_{\alpha_1} \dots w^{i_{k+1}}_{\alpha_{k+1}} X^{i_1}_{\alpha_1} \dots X^{i_{k+1}}_{\alpha_{k+1}}f(p \cdot \exp(\sigma \delta_\tau W))}_{\eqqcolon g(\tau)}.
\end{split}
\end{equation*}
Therefore, since $g(\tau)=o\left(\tau^k\right)$ as $\tau \searrow 0$, the thesis follows by the uniqueness of Taylor's polynomial.
\end{proof}
Formula \eqref{eq_Taylor_coefficients} motivates the following definition. Given $p \in \Omega$, $q \in \mathbb{G}$ and $f \in C^\infty(\Omega)$, set
\begin{equation} \label{eq_higher_order_derivatives}
T^k_p f(q) \coloneqq \sum_{j=1}^k \frac{1}{j!} \sum_{\substack{i_1,\dots,i_j=1 \\ i_1+\dots+i_j=k}}^s \sum_{\alpha_1=1}^{n_{i_1}} \dots \sum_{\alpha_j=1}^{n_{i_j}} q^{i_1}_{\alpha_1} \dots q^{i_j}_{\alpha_j} X^{i_1}_{\alpha_1} \dots X^{i_j}_{\alpha_j}f(p)
\end{equation}
and
\begin{equation} \label{eq_Taylor_polynomial}
P^\ell_p f(q) \coloneqq f(p)+\sum_{k=1}^\ell T^k_p f(q).
\end{equation}
We say that $P^\ell_p f$ is the \emph{Taylor polynomial of $f$ of order $\ell$ centered at $p$}. Combining \cite[Corollary 20.2.10]{BonLanUgu} and \cite[Proposition 20.3.15]{BonLanUgu},
\begin{equation} \label{eq_equivalence_Taylor}
X^{I_k}_{A_k} P^\ell_p f(0) = X^{I_k}_{A_k} f(p) \qquad \text{for every $k \leq \ell$, $\abs*{I_k} \leq \ell$ and $A_k \in \mathcal A_k(I_k)$.}
\end{equation}
Equivalently, $P^\ell_p f$ is the only polynomial with \emph{homogeneous degree} (cf.~\cite{MR0657581,BonLanUgu}) at most $\ell$ such that \eqref{eq_equivalence_Taylor} holds. We can now proceed with the asymptotic analysis of the difference quotients and prove the main result of the section. For any $\eps>0$ and $w \in \mathbb{G}$, set
\begin{equation} \label{eq_difference_quotients_notation}
    R^w_{\ell,\eps} f(p) \coloneqq \frac{f\left(p\cdot\delta_\eps(w)\right)-P^\ell_p f\left(\delta_\eps(w)\right)}{\eps^\ell}.
\end{equation}
\begin{theorem} \label{thm_difference_quotients}
Let $\ell \geq 1$ and $\theta \in [1,\infty)$. Let $f \in W_\hhh^{\ell,\theta}(\Omega)$. Let $A\subseteq\Om$ be open with $\mathrm{dist}(A,\partial\Om)>0$. Then
\begin{equation}\label{eq:priori10_lavorocarnot}
\lim_{\eps \searrow 0}^{L^\theta(A)}  R^w_{\ell,\eps} f(p) = 0\qquad\text{for every $w \in \mathbb G$.}
\end{equation}
In addition, there exist $C_1=C_1(\mathbb G)>0$ and $C_2=C_2(\mathbb G,\ell)>0$ such that, if $\varepsilon>0$ and $w\in\mathbb G$ satisfy 
\begin{equation}\label{condepsw_lavorocarnot}
C_1\,\varepsilon \,d(w,0)<\,\mathrm{dist}(A,\partial\Om),
\end{equation}
then
\begin{equation}\label{eq:priori11_lavorocarnot}
   \left\| R^w_{\ell,\eps}f\right\|_{L^\theta(A)}\leq C_2\, d(w,0)^\ell\sum_{\alpha_1,\ldots,\alpha_\ell=1}^{n_1} \left\|X^1_{\alpha_1}\ldots X^1_{\alpha_\ell}f \right\|_{L^\theta(\Om)}.
\end{equation}
Finally, if $f\in W^{\ell,\infty}_\hhh(\Om)$, then 
\begin{equation}\label{eq:priori11_infinito_lavorocarnot}
   \left\| R^w_{\ell,\eps}f\right\|_{L^\infty(A)}\leq C_2\, d(w,0)^\ell\sum_{\alpha_1,\ldots,\alpha_\ell=1}^{n_1} \left\|X^1_{\alpha_1}\ldots X^1_{\alpha_\ell}f \right\|_{L^\infty(\Om)}.
\end{equation}
\end{theorem}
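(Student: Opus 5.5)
The plan is to prove the quantitative estimates \eqref{eq:priori11_lavorocarnot} and \eqref{eq:priori11_infinito_lavorocarnot} first for smooth $f$, then extend them by density using \Cref{meyersserrin}, and finally deduce \eqref{eq:priori10_lavorocarnot} by a standard $\varepsilon/3$ argument.

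\textbf{Step 1: smooth $f$.} Let $f\in C^\infty(\Om)\cap W^{\ell,\theta}_\hhh(\Om)$. The key tool is a horizontal Taylor formula with integral remainder.

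First, I join $0$ to $w$ by a horizontal path. By the ball-box/Chow theorem (or by homogeneity and compactness of the unit sphere), I write
\[
w=\exp(t_1 Z_1)\cdots\exp(t_N Z_N),\qquad Z_m\in V_1,\ \abs{Z_m}=1,\qquad \sum_m\abs{t_m}\le C\,d(w,0),
\]
where $N=N(\mathbb G)$ is fixed. Dilating gives $\delta_\eps(w)=\prod_m\exp(\eps t_m Z_m)$.

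Next, along each segment I apply the one-variable Taylor formula with integral remainder of order $\ell$, using \eqref{eq_iterated_derivative}. Composing the $N$ expansions expresses $f(p\cdot\delta_\eps(w))$ as a polynomial in $(\eps t_m)$ with coefficients of the form $Z_{m_1}\cdots Z_{m_j}f(p)$, $j<\ell$. The remainder is a finite sum of integrals of $\ell$-th order horizontal derivatives $X^1_{\alpha_1}\cdots X^1_{\alpha_\ell}f$. Each is evaluated at points $p\cdot\gamma(\sigma)$ with $\gamma(\sigma)$ on the rescaled path, and carries a factor bounded by $(\eps\, C d(w,0))^\ell$. Here I use the convention that the last surviving terms are also written with $\ell$ derivatives.

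The hard step is identifying the polynomial part with $P^\ell_p f(\delta_\eps(w))$ up to this remainder. I would do this through the uniqueness in \eqref{eq_equivalence_Taylor}. Consider the function $q\mapsto f(p\cdot q)$ near $0$. Its Taylor polynomial of homogeneous degree $\ell$ at $0$ is $P^\ell_p f$. Alternatively, I can apply the above expansion to $g=f-P^\ell_p f(p^{-1}\cdot)$. All horizontal derivatives of $g$ of order $\le\ell$ vanish at $p$; note that higher-layer derivatives are brackets of horizontal ones, which is why $W^{\ell}_\hhh$ suffices. Moreover, $X^1_{\alpha_1}\cdots X^1_{\alpha_\ell}$ applied to the polynomial $P^\ell_pf(p^{-1}\cdot)$ is a constant equal to $X^1_{\alpha_1}\cdots X^1_{\alpha_\ell}f(p)$. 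Hence the polynomial part for $g$ is zero, and
\[
\abs{R^w_{\ell,\eps}f(p)}\le C\,d(w,0)^\ell\sum_{\alpha}\int_0^1\Bigl(\abs{X^1_{\alpha_1}\cdots X^1_{\alpha_\ell}f(p\cdot\gamma_\eps(\sigma))}+\abs{X^1_{\alpha_1}\cdots X^1_{\alpha_\ell}f(p)}\Bigr)\,d\sigma ,
\]
where $\gamma_\eps=\delta_\eps\circ\gamma$. By \eqref{condepsw_lavorocarnot}, with $C_1$ the path-length constant, $p\cdot\gamma_\eps(\sigma)\in\Om$ for $p\in A$.

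Taking $L^\theta(A)$ norms, I apply Minkowski's integral inequality and the right-translation invariance of Lebesgue measure \eqref{eq_Lebesgue_Haar}. This gives \eqref{eq:priori11_lavorocarnot}; the $L^\infty$ bound follows in the same way.

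\textbf{Step 2: density.} For general $f$, I take $f_h$ from \Cref{meyersserrin}. Then $R^w_{\ell,\eps}f_h\to R^w_{\ell,\eps}f$ in $L^\theta(A)$, for two reasons. First, $f_h(p\cdot\delta_\eps(w))\to f(p\cdot\delta_\eps(w))$ by translation invariance. Second, the coefficients of $P^\ell_p f_h$ involve derivatives of order $\le\ell$, and those along higher layers are expressible by horizontal ones of order $\le\ell$; hence they converge in $L^\theta(A)$. This requires interpreting $P^\ell_p f$ through these distributional derivatives. The case $\theta=\infty$ is handled pointwise a.e. by local mollification, or by applying the $L^\theta$ bound for all $\theta$ and letting $\theta\to\infty$ on bounded $A$.

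\textbf{Step 3: convergence.} I prove \eqref{eq:priori10_lavorocarnot} as follows. For smooth $f_h$ with bounded derivatives near $A$, the difference $R^w_{\ell,\eps}f_h$ is bounded by the oscillation of the $\ell$-th horizontal derivatives over a ball of radius $C\eps$. This tends to $0$ in $L^\theta(A)$ by \Cref{lem_continuity_translations} applied to $X^1_{\alpha_1}\cdots X^1_{\alpha_\ell}f_h$, because the remainder in Step 1 is really controlled by $\abs{X^\ell f(p\cdot\gamma_\eps(\sigma))-X^\ell f(p)}$. The estimate \eqref{eq:priori11_lavorocarnot} for $f-f_h$ is uniform in $\eps$. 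Combining the two concludes the proof.
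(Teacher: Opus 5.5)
Your proposal is correct and follows essentially the paper's proof: your $g$ is the paper's $F_p(q)=f(p\cdot q)-P^\ell_pf(q)$, and your composed one-variable Taylor expansions along the horizontal pieces of $\delta_\eps(w)$ amount to the paper's iterated fundamental theorem of calculus along $\gamma$. The bounds then come, as in the paper, from Minkowski's inequality, right-translation invariance and \Cref{lem_continuity_translations}; your $\eps/3$ density argument is an equivalent substitute for the paper's dominated convergence.

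The one real difference is the case $\theta=\infty$. The paper relies on continuity of the lower-order horizontal derivatives and on Lipschitz estimates along horizontal curves. You instead let $\theta\to\infty$ in the bound \eqref{eq:priori11_lavorocarnot}, whose constant does not depend on $\theta$. This is simpler, but it needs one step you left implicit: since $f\in W^{\ell,\infty}_\hhh(\Omega)$ need not lie in $W^{\ell,\theta}_\hhh(\Omega)$, first replace $\Omega$ by a bounded open $\Omega'\subseteq\Omega$ around a bounded piece $A'$ of $A$ with $C_1\eps\, d(w,0)<\mathrm{dist}(A',\partial\Omega')$. Then $f\in W^{\ell,\theta}_\hhh(\Omega')$ and $|\Omega'|^{1/\theta}\to1$, and you conclude by taking the supremum over such pieces $A'$.
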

\begin{proof}
Assume first that $\theta\in[1,\infty)$. By the density of $C^\infty(\Om)\cap W^{\ell,\theta}_\hhh(\Om)$ in $W^{\ell,\theta}_\hhh(\Om)$ (recall \Cref{meyersserrin}), it suffices to prove the thesis when $f\in C^\infty(\Om)\cap W^{\ell,\theta}_\hhh(\Om)$. Fix $p\in A$. 
    Since $\mathrm{dist}(A,\partial\Om)>0$, there exists an open neighborhood of $0$, say $U$, such that the function $F_p:U\to \rr$ given by 
    \begin{equation*}
        F_p(q)\coloneqq f(p\cdot q)-P^\ell_p f(q)
    \end{equation*}
    is well-defined. Notice that
    \begin{equation} \label{derivatesonozeronellidentitaF}
    X^1_{\alpha_1}\ldots X^1_{\alpha_j}F_p(0) \overset{\eqref{eq_equivalence_Taylor}}{=}0\qquad\text{for every $j=0,\ldots,\ell$ and $\alpha_1,\ldots,\alpha_j=1,\ldots, n_1$.}
    \end{equation}
    Moreover, by the definition of $P^\ell_p f$,
    \begin{equation} \label{derivatediFallultimostep}
        X^1_{\alpha_1}\ldots X^1_{\alpha_\ell}F_p(q)=\left(X^1_{\alpha_1}\ldots X^1_{\alpha_\ell}f\right)(p\cdot q)-X^1_{\alpha_1}\ldots X^1_{\alpha_\ell}f(p)\qquad\text{for every $\alpha_1,\ldots \alpha_\ell=1,\ldots, n_1$}.
    \end{equation}
 By \cite[Lemma 1.40]{MR0657581}, there exist constants $N\in\mathbb N_+$ and $C>0$, both depending only on $\mathbb G$, such that, for any $w\in\mathbb G$, the following properties hold:
    \begin{align}
        w&=w_1\cdot w_2 \cdot \ldots \cdot w_N\qquad\text{for some $w_1,\ldots,w_N\in \exp(V_1)$,}\nonumber\\
        d(w_j,0)&\leq C d(w,0)\qquad\text{for every $j=1,\ldots,N$.}\label{boundsupezziorizzontali373737373}
    \end{align}
    Set $C_1=N C$. Fix $w\in\mathbb G$ and $\eps>0$ such that \eqref{condepsw_lavorocarnot} holds. Define $\gamma:[0,1]\to \mathbb G$ by means of the following construction. For every $j=1,\ldots, N$, set
    \begin{align*}
        t_0=0,\qquad t_j\coloneqq \frac{j}{N},\qquad 
        J_j\coloneqq \left[t_{j-1},t_j\right],\qquad W_j\coloneqq \log w_j.
    \end{align*}
    Define $\gamma$ by setting
    \begin{equation*}
        \left.\gamma\right|_{J_j}(\tau)\coloneqq \delta_\eps(w_1) \cdot \ldots \cdot \delta_\eps(w_{j-1})\cdot \delta_{\eps N(\tau-t_{j-1})}(w_j).
    \end{equation*}
    In particular, 
    \begin{equation*}
        \left.\dot\gamma\right|_{J_j}(\tau)=\eps N \left.W_j\right|_{\gamma(\tau)}.
    \end{equation*}
    By construction, $p\cdot \gamma$ is a horizontal, piecewise smooth curve joining $\gamma(0)=p$ and $\gamma(1)=p\cdot\delta_\eps(w)$. Moreover, 
    \begin{equation}\label{stimasugammatauuuuuuu}
        d\left(p\cdot \gamma(\tau),p\right)=d\left(\gamma(\tau),0\right)\leq \eps \sum_{i=1}^N d(w_i,0)\overset{\eqref{boundsupezziorizzontali373737373}}{\leq} NC\eps d(w,0)=C_1 \eps d(w,0).
    \end{equation}
    Therefore, by \eqref{condepsw_lavorocarnot}, $p\cdot \gamma(\tau)\in \Om$ for every $\tau\in [0,1]$. By the fundamental theorem of calculus, writing $\tau_0 \coloneqq 1$,
    \begin{equation*}
        \begin{split}
            F_p\left(\delta_\eps(w)\right)\overset{\eqref{derivatesonozeronellidentitaF}}{=}F_p\left(\gamma(\tau_0)\right)-F_p\left(\gamma(0)\right)=\int_0^{\tau_0}\frac{d}{d\tau_1}\left(F_p(\gamma(\tau_1)\right)\,d\tau_1=\eps N \sum_{i_1=1}^N\int_{J_{i_1}}\left(W_{i_1 }F_p\right)(\gamma(\tau_1))\,d\tau_1.
        \end{split}
    \end{equation*}
   Fix $i_1=1,\ldots, N$ and $\tau_1\in J_{i_1}$. Since $W_{i_1}\in V_1$, \eqref{derivatesonozeronellidentitaF} yields $\left(W_{i_1}F_p\right)(0)=0$. Therefore, arguing as above,
\begin{equation*}
        \begin{split}
            \left(W_{i_1}F_p\right)\left(\gamma(\tau_1)\right)\overset{\eqref{derivatesonozeronellidentitaF}}{=}\left(W_{i_1}F_p\right)\left(\gamma(\tau_1)\right)-\left(W_{i_1}F_p\right)\left(\gamma(0)\right)=\eps N \sum_{i_2=1}^N\int_{J_{i_2}\cap \, [0,\tau_1]}\left(W_{i_2}W_{i_1}F_p\right)(\gamma(\tau_2))\,d\tau_2.
        \end{split}
    \end{equation*}
    Combining the above identities,
    \begin{equation*}
        F_p\left(\delta_\eps(w)\right)=(\eps N)^2\sum_{i_1,i_2=1}^N\int_{J_{i_1}}\int_{J_{i_2}\cap \, [0,\tau_1]}\left(W_{i_2}W_{i_1}F_p\right)(\gamma(\tau_2))\,d\tau_2\,d\tau_1.
    \end{equation*}
    Iterating the above procedure $\ell$ times,
     \begin{equation*}
        F_p\left(\delta_\eps(w)\right)=(\eps N)^\ell\sum_{i_1,\ldots,i_\ell=1}^N\int_{J_{i_1}}\dots\int_{J_{i_\ell}\cap \, [0,\tau_{\ell-1}]}\left(W_{i_\ell}\ldots W_{i_1}F_p\right)(\gamma(\tau_\ell))\,d\tau_\ell\ldots d\tau_1.
    \end{equation*}
    Hence, by \eqref{derivatediFallultimostep} and noticing that $F_p(\delta_\eps(w))=\eps^\ell R^w_{\ell,\eps} f(p)$,
    \begin{equation*}
        R^w_{\ell,\eps}f(p)=N^\ell \sum_{i_1,\ldots,i_\ell=1}^N\int_{J_{i_1}}\dots\int_{J_{i_\ell}\cap \, [0,\tau_{\ell-1}]}\Big[\left(W_{i_\ell}\ldots W_{i_1 }f\right)(p\cdot \gamma(\tau_\ell))-\left(W_{i_\ell}\ldots W_{i_1}f\right)(p)\Big]\,d\tau_\ell\ldots d\tau_1.
    \end{equation*} 
    In particular, Minkowski's integral inequality (cf.~\cite[Corollary B.83]{MR3726909}) implies
    \begin{equation}\label{aftermif56889}
        \left\|R^w_{\ell,\eps}f \right\|_{L^\theta(A)}\leq N^\ell\sum_{i_1,\ldots,i_\ell=1}^N\int_0^1\underbrace{\left(\int_A\Big|\left(W_{i_\ell}\ldots W_{i_1}f\right)(p\cdot \gamma(\tau))-\left(W_{i_\ell}\ldots W_{i_1}f\right)(p)\Big|^\theta\,dp\right)^\frac{1}{\theta}}_{\eqqcolon g_{i_1,\ldots,i_\ell}(\tau)}\,d\tau.
    \end{equation} 
    First, \eqref{stimasugammatauuuuuuu} and \Cref{lem_continuity_translations} imply that 
    \begin{equation}\label{aeconv38586795838587}
        \text{$g_{i_1,\ldots,i_\ell}(\tau)\to 0$ as $\eps\searrow 0$ for a.e.~$\tau\in (0,1)$.}
    \end{equation} 
    For any $i=1,\ldots, N$, set $W_i=\sum_{j=1}^{n_1}w^j_i X^1_j$ and denote by $|w_i|_\mathrm{eu}$ the Euclidean norm of $(w_1^1,\ldots,w_i^{n_1})$. By \cite[Example 5.1.2]{BonLanUgu} and \cite[Proposition 5.1.4]{BonLanUgu}, there exists a constant $\tilde C=\tilde C(\mathbb G)>0$ such that 
\begin{equation}\label{stimaeucondcc}
    |w_i|_\mathrm{eu}\leq\tilde Cd(w_i,0).
\end{equation}
  Then
    \begin{equation*}
        \begin{split}
            \left|W_{i_\ell}\ldots W_{i_1}f(q)\right|
            &\leq \left(n_1\right)^\ell|w_{i_\ell}|_\mathrm{eu}\ldots|w_{i_1}|_\mathrm{eu} \sum_{j_1,\ldots,j_\ell=1}^{n_1} \left|X^1_{j_\ell}\ldots X^1_{j_1}f(q)\right|\\
            \overset{\eqref{stimaeucondcc}}&{\leq} \left(\tilde C n_1\right)^\ell d(w_{i_\ell},0)\ldots d(w_{i_1},0) \sum_{j_1,\ldots,j_\ell=1}^{n_1} \left|X^1_{j_\ell}\ldots X^1_{j_1}f(q)\right|\\
            \overset{\eqref{boundsupezziorizzontali373737373}}&{\leq}\left(C\tilde C n_1\right)^\ell d(w,0)^\ell \sum_{j_1,\ldots,j_\ell=1}^{n_1} \left|X^1_{j_\ell}\ldots X^1_{j_1}f(q)\right|.
        \end{split}
    \end{equation*}
    Therefore, \eqref{eq_Lebesgue_Haar} implies that 
    \begin{equation}\label{sitmfkgotgidjfvof}
        g_{i_1,\ldots,i_\ell}(\tau)\leq 2\left(C\tilde C n_1\right)^\ell d(w,0)^\ell \sum_{j_1,\ldots,j_\ell=1}^{n_1} \left\|X^1_{j_\ell}\ldots X^1_{j_1}f \right\|_{L^\theta(\Om)}.
    \end{equation}
    By \eqref{aftermif56889}, \eqref{aeconv38586795838587}, \eqref{sitmfkgotgidjfvof} and the dominated convergence theorem, \eqref{eq:priori10_lavorocarnot} follows. Moreover, \eqref{eq:priori11_lavorocarnot} follows as well from \eqref{aftermif56889} and \eqref{sitmfkgotgidjfvof}. Eventually, assume that $f\in W^{\ell,\infty}_\hhh(\Om)$. By \cite{MR1631642}, 
    \begin{equation}\label{derivatecontinuefinoaellmenouno}
        X^1_{i_1}\ldots X^1_{i_k}f\in C(\Om)\qquad\text{for every $k=1,\ldots,\ell-1$ and $i_1,\ldots,i_k=1,\ldots,n_1$.}
    \end{equation}
    Combining \eqref{derivatecontinuefinoaellmenouno} with \cite[Proposition 2.4]{MR4581339} and \cite[Proposition 2.5]{MR4581339}, one argues as above to deduce that 
     \begin{equation}\label{finoaunoprimaperr45678}
          R^w_{\ell,\eps}(p)= \frac{N^{\ell-1}}{\eps}\sum_{i_1,\ldots,i_{\ell-1}=1}^N\int_{J_{i_1}}\dots\int_{J_{i_{\ell-1}}\cap[0,\tau_{\ell-2}]}\left(W_{i_{\ell-1}}\ldots W_{i_1 }F_p\right)( \gamma(\tau_{\ell-1}))\,d\tau_{\ell-1}\ldots d\tau_1.
        \end{equation}
        Combining \eqref{derivatesonozeronellidentitaF} and \eqref{stimasugammatauuuuuuu} with \cite[Proposition 3.4]{MR4801826}, \cite[Proposition 2.4]{MR4581339} and \cite[Proposition 2.5]{MR4581339}, 
\begin{equation}\label{stimaconsubdiffimplicito}
    \left|\left(W_{i_{\ell-1}}\ldots W_{i_1 }F_p\right)(\gamma(\tau_{\ell-1})) \right|\leq 2\eps N \left\|W_{i_\ell}\ldots W_{i_1} f \right\|_{L^\infty(\Om)}.
\end{equation}
By \eqref{finoaunoprimaperr45678} and \eqref{stimaconsubdiffimplicito}, \eqref{eq:priori11_infinito_lavorocarnot} follows as in the previous case.
\end{proof}

\section{Well-posedness for transport and flow equations driven by contact velocities} \label{sec_well_posedness}
In this section, we prove that contact vector fields with horizontal Sobolev regularity satisfy the renormalization property. As a consequence, we obtain well-posedness for the transport and flow equations. We provide only the proofs specific to our setting, as the remaining arguments follows as in Euclidean spaces and Heisenberg groups. We refer to \cite{MR1022305} and \cite{dpl1}, respectively, for the corresponding proofs and further details.
\subsection{Transport equation, distributional solutions, and renormalized solutions}\label{sec_prelitranspo}
Fix $\bar\tau\in (0,\infty]$. Given $\alpha,\beta\in [1,\infty]$, we consider (see \cite{MR3726909}) the space $L^\alpha\left(0,\bar\tau;L^\beta(\mathbb{G})\right)$ of Borel functions $u : (0,\bar{\tau}) \times \mathbb{G} \to \rr$ such that 
\begin{equation*}
\norm*{u}_{L^\alpha(0,\bar{\tau};L^\beta(\mathbb{G}))} \coloneqq \begin{cases}
\displaystyle\left(\int_0^{\bar{\tau}} \norm*{u(\cdot,\tau)}_{L^\beta(\mathbb{G})}^\alpha \,d\tau\right)^{\frac{1}{\alpha}} & \text{if $\alpha \in [1,\infty)$}\\
\mathop{\mathrm{ess\,sup}}\limits_{\tau \in (0,\bar{\tau})} \, \norm*{u(\cdot,\tau)}_{L^\beta(\mathbb{G})} & \text{if $\alpha=\infty$}
\end{cases}
\end{equation*}
is finite. Its local version $L^\alpha\left(0,\bar{\tau};L^\beta_{\mathrm{loc}}(\mathbb{G})\right)$ is defined similarly. Moreover, when $\alpha=\beta$, $L^\alpha\left(0,\bar{\tau};L^\alpha(\mathbb{G})\right)$ naturally identifies with $L^\alpha\left((0,\bar{\tau})\times\mathbb{G}\right)$.
The Cauchy problem for the \emph{transport equation} reads
\begin{equation} \label{eq_transport_equation}\tag{$\mathscr T$}
\begin{cases}
\displaystyle{\frac{\partial u}{\partial \tau} - \left\langle \bb,\nabla u \right\rangle + cu = 0} & \text{in $(0,\bar{\tau}) \times \mathbb{G}$}\\
u(0,\cdot)=u_0 & \text{in $\mathbb{G}$,}
\end{cases}
\end{equation}
where a velocity field $\bb\in \mathbb{X}(\mathbb{G})$, a reaction term $c$ and an initial condition $u_0$ are given. Fix $\theta \in [1,\infty]$. From now on, we suppose that
\begin{equation} \label{b,c_assumptions}
\bb\in L^1\left(0,\bar{\tau};\left[L^{\theta'}_{\mathrm{loc}}(\mathbb{G})\right]^n\right), \qquad c,\divv \bb \in L^1\left(0,\bar{\tau};L^{\theta'}_{\mathrm{loc}}(\mathbb{G})\right),\qquad u_0\in L^\theta_{\mathrm{loc}}(\mathbb{G}).
\end{equation}
We emphasize that, in the first assumption above (and similarly in the analogous ones below), the integrability of $\bb$ is understood component-wise with respect to the fixed left-invariant basis. We say that $u \in L^\infty\left(0,\bar{\tau};L^\theta_{\mathrm{loc}}(\mathbb{G})\right)$ ($u\in L^\infty((0,\bar\tau)\times\mathbb{G})$ if $\theta=\infty$) is a \emph{distributional solution to \eqref{eq_transport_equation}} if
\begin{equation*}
 \left\langle \mathscr T_{u_0,\bb,c}(u),\varphi\right\rangle\coloneqq-\int_{\mathbb{G}}u_0(p)\varphi(0,p)\,dp+\int_{0}^{\bar{\tau}}\int_{\mathbb{G}}u\left[-\partial_\tau\varphi+\langle\bb, \nabla \varphi \rangle+(c+\divv\bb)\varphi\right] dp\,d\tau=0 
\end{equation*}
for every $\varphi\in C_c^\infty([0,\bar{\tau})\times\mathbb{G})$. The existence of solutions to \eqref{eq_transport_equation} is ensured by the next result, similar to \cite[Proposition II.1]{MR1022305}.
\begin{proposition} \label{prop_existence}
Let $\theta \in [1,\infty]$ and assume \eqref{b,c_assumptions}. Assume, in addition, that $u_0 \in L^\theta(\mathbb{G})$ and
\begin{equation*}
\begin{cases}
c+\frac{1}{\theta}\divv \bb \in L^1(0,\bar{\tau};L^\infty(\mathbb{G})) & \text{if $\theta \in (1,\infty]$}\\
c, \divv \bb \in L^1(0,\bar{\tau};L^\infty(\mathbb{G})) & \text{if $\theta=1$}.
\end{cases}
\end{equation*}
Then there exists a distributional solution $u \in L^\infty\left(0,\bar{\tau};L^\theta(\mathbb{G})\right)$ to \eqref{eq_transport_equation}.
\end{proposition}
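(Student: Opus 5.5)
We follow the DiPerna--Lions scheme of \cite[Proposition II.1]{MR1022305}: regularize the data, solve the smooth problems by characteristics, prove an a priori $L^\theta$ bound that does not depend on the regularization, and pass to the limit by weak(-$*$) compactness. The equation is linear in $u$ and the distributional formulation only involves $u$ paired with smooth test functions, so weak convergence of the approximate solutions will suffice.

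\textbf{Regularization and smooth solutions.} Using group convolution in space (\Cref{prop_group_mollification}), together with convolution in time and multiplication by cut-offs, we build smooth, compactly supported approximations $\bb_h$, $c_h$, $u_{0,h}$. They converge to $\bb$, $c$, $u_0$ in the topologies of \eqref{b,c_assumptions}, and also $\divv\bb_h\to\divv\bb$ in $L^1(0,\bar\tau;L^{\theta'}_{\mathrm{loc}})$. Since left- and right-invariant fields are divergence-free, $\divv$ commutes with componentwise convolution up to controllable errors. We also want the bound on $c_h+\frac1\theta\divv\bb_h$ in $L^1(0,\bar\tau;L^\infty)$ to hold uniformly in $h$ (for $\theta=1$, separate uniform bounds on $c_h$ and $\divv\bb_h$). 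For $\bb_h$ smooth, the Cauchy--Lipschitz flow $X_h$ of $-\bb_h$ exists globally, so the method of characteristics gives a smooth solution
\[
u_h(\tau,X_h(\tau,p))=u_{0,h}(p)\exp\Bigl(-\int_0^\tau c_h(\sigma,X_h(\sigma,p))\,d\sigma\Bigr),
\]
which solves \eqref{eq_transport_equation} classically, hence distributionally.

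\textbf{A priori estimate.} For $\theta<\infty$, multiply the equation by $\theta|u_h|^{\theta-2}u_h$ (in practice, work with a renormalized smooth version of $|u_h|^\theta$) and integrate over $\mathbb G$. The transport term gives $-\int\langle\bb_h,\nabla|u_h|^\theta\rangle=\int\divv\bb_h\,|u_h|^\theta$, which yields
\[
\frac{d}{d\tau}\int|u_h|^\theta=-\theta\int\Bigl(c_h+\tfrac1\theta\divv\bb_h\Bigr)|u_h|^\theta .
\]
Gronwall's lemma then gives $\|u_h(\tau)\|_{L^\theta}\le\|u_{0,h}\|_{L^\theta}\exp\bigl(\|(c_h+\frac1\theta\divv\bb_h)^-\|_{L^1L^\infty}\bigr)$, uniformly in $h$. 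For $\theta=\infty$, the characteristics formula directly gives $\|u_h\|_\infty\le\|u_{0,h}\|_\infty\exp\bigl(\|c_h^-\|_{L^1L^\infty}\bigr)$.

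\textbf{Passage to the limit.} For $\theta\in(1,\infty]$, a subsequence satisfies $u_h\rightharpoonup u$ weakly$^*$ in $L^\infty(0,\bar\tau;L^\theta)$, since this is the dual of $L^1(0,\bar\tau;L^{\theta'})$. In the weak formulation, the terms $u_h[\langle\bb_h,\nabla\varphi\rangle+(c_h+\divv\bb_h)\varphi]$ converge because the coefficients converge strongly in $L^1(0,\bar\tau;L^{\theta'}_{\mathrm{loc}})$ and $u_h$ converges weakly$^*$ with bounded norm. The initial term $\int u_{0,h}\varphi(0,\cdot)$ converges trivially.

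\textbf{Main obstacle.} The delicate case is $\theta=1$, where $L^\infty(0,\bar\tau;L^1)$ is not a dual space and weak compactness fails. Following DiPerna--Lions, we use the uniform $L^\infty$ bounds on $c$ and $\divv\bb$. These allow us to propagate equi-integrability, namely uniform bounds on $\int\Psi(|u_h|)$ for a superlinear $\Psi$ given by de la Vallée Poussin applied to $u_0$, through the same energy computation. Dunford--Pettis then gives weak compactness in $L^1_{\mathrm{loc}}$. A second point to check is the mollification step, namely that the uniform $L^\infty$ control of $c_h+\frac1\theta\divv\bb_h$ survives convolution. Here group convolution preserves $L^\infty$ bounds, but $\divv(\bb*\rho_\eps)$ equals $(\divv\bb)*\rho_\eps$ only up to the left/right-invariance mismatch. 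We will handle this by mollifying in the way that commutes with $\divv$, namely convolving so that the right-invariant structure acts on the mollifier, or alternatively by working directly with a duality argument on the adjoint smooth problem.
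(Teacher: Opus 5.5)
\textbf{Overall.} The paper gives no separate proof and only points to \cite[Proposition II.1]{MR1022305}. In exponential coordinates the statement is literally the Euclidean one: Haar measure is Lebesgue measure, $\divv$ is the Euclidean divergence, and the left-invariant frame has polynomial coefficients with unit determinant. Your scheme is that DiPerna--Lions argument. However, the regularization step as you set it up fails for $\theta<\infty$.

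\textbf{The main gap: cut-offs.} No growth condition on $\bb$ is assumed here; \eqref{eq_growth_condition} only enters in \Cref{exun_pde_theorem}. So the flow of a merely mollified field may blow up in finite time. To get a global Cauchy--Lipschitz flow you multiply by cut-offs, but then
\[
\divv(\chi_R\bb_\eps)=\chi_R\divv\bb_\eps+\langle\bb_\eps,\nabla\chi_R\rangle .
\]
Nothing in the hypotheses bounds the last term in $L^1(0,\bar\tau;L^\infty(\mathbb G))$, let alone uniformly in $R$ and $\eps$. Hence the uniform bound on $c_h+\frac1\theta\divv\bb_h$ that drives your Gronwall estimate is not available for the approximations you build. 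Your ``main obstacle'' paragraph treats the convolution, not this.

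For $\theta\in(1,\infty)$ you can repair it by compensating in the reaction term. With $c_h\coloneqq\chi_R c_\eps-\frac1\theta\langle\bb_\eps,\nabla\chi_R\rangle$ one gets $c_h+\frac1\theta\divv(\chi_R\bb_\eps)=\chi_R\bigl(c_\eps+\frac1\theta\divv\bb_\eps\bigr)$. The correction vanishes on each fixed compact set once $R$ is large, so the limit equation is unchanged. This trick does not give the separate bounds on $c_h$ and $\divv\bb_h$ needed at $\theta=1$.

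There (and in fact for every $\theta$) it is cleaner to drop the cut-offs. Define $u_h$ along the possibly non-global flow $X_h$ of $-\bb_\eps$, and put $u_h=0$ at points not reached from time $0$. Then check both the weak formulation and the estimates via the change of variables $y=X_h(\tau,p)$ and Liouville's formula, e.g.
\[
\int_{\mathbb G}|u_h(\tau,y)|^\theta\,dy=\int_{D_\tau}|u_{0,h}(p)|^\theta\exp\Bigl(-\theta\int_0^\tau\bigl(c_\eps+\tfrac1\theta\divv\bb_\eps\bigr)(\sigma,X_h(\sigma,p))\,d\sigma\Bigr)\,dp,
\]
where $D_\tau$ is the set of starting points whose trajectory survives up to time $\tau$. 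Mass escaping to infinity only lowers the left-hand side. Compactly supported test functions vanish along escaping trajectories before their blow-up time, so the only boundary term is $\int u_{0,h}\varphi(0,\cdot)$.

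\textbf{The case $\theta=1$.} Here $\theta'=\infty$, and mollified coefficients do not converge in $L^1(0,\bar\tau;L^\infty_{\mathrm{loc}}(\mathbb G))$, contrary to your opening claim. So weak $L^1_{\mathrm{loc}}$-compactness of $u_h$ alone does not let you pass to the limit in $u_h\langle\bb_h,\nabla\varphi\rangle$ and $u_h(c_h+\divv\bb_h)\varphi$. You need a.e.\ convergence of the coefficients along a subsequence, their local bounds in $L^1(0,\bar\tau;L^\infty)$, Egorov's theorem, and the equi-integrability of $u_h$ uniformly in time. For that equi-integrability, the de la Vall\'ee Poussin function $\Psi$ must have moderate growth ($t\Psi'(t)\le C\Psi(t)$); otherwise the reaction term cannot be absorbed.

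\textbf{The commutation issue.} The issue you flag has a concrete answer. With the paper's convention $(f*g)(p)=\int f(q)g(q^{-1}p)\,dq$, left-invariant fields satisfy $X(\rho_\eps*f)=\rho_\eps*(Xf)$. So mollifying componentwise as $\rho_\eps*b^i_\alpha$ (mollifier on the left) gives exactly $\divv(\rho_\eps*\bb)=\rho_\eps*\divv\bb$, and $\|\rho_\eps*g\|_{L^\infty}\le\|g\|_{L^\infty}$. Alternatively, mollify the Euclidean components in exponential coordinates, and you are literally in the setting of \cite{MR1022305}.
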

The following property (cf.~\cite{AST_2008__317__175_0}) allows us to handle the initial condition. See \cite[Proposition 4.3]{dpl1} for the proof.
\begin{proposition}\label{propositiondelellis}
  Let $u \in L^\infty\left(0,\bar{\tau};L^\theta_{\mathrm{loc}}(\mathbb{G})\right)$ be a distributional solution to \eqref{eq_transport_equation}. Extend $\bb,c,u$ to $(-\infty,\bar\tau)\times\mathbb{G}$ by setting
   \begin{equation}\label{extensiondelellis}
\bb(\tau,p)=\begin{cases}
0 & \text{if $\tau<0$}\\
\bb(\tau,p) & \text{otherwise,}
\end{cases}
\qquad   
c(\tau,p)=\begin{cases}
0 & \text{if $\tau<0$}\\
c(\tau,p) & \text{otherwise,}
\end{cases}
\qquad 
u(\tau,p)=\begin{cases}
u_0(p) & \text{if $\tau<0$}\\
u(\tau,p) & \text{otherwise.}
\end{cases}
\end{equation}
Then 
\begin{equation*}
    \partial_\tau u - \left\langle \bb,\nabla u\right\rangle + cu = 0  \qquad\text{in $(-\infty,\bar{\tau}) \times \mathbb{G}$}
\end{equation*}
in the sense of distributions, namely $\langle \mathscr T_{\bb,c}(u),\varphi\rangle=0$ for every $\varphi\in C_c^\infty((-\infty,\bar{\tau})\times\mathbb{G})$, where
\begin{equation}\label{equazioneestesasenzadato}
   \left\langle \mathscr T_{\bb,c}(u),\varphi\right\rangle\coloneqq \int_{-\infty}^{\bar{\tau}}\int_{\mathbb{G}}u\left[-\partial_\tau\varphi+\langle \bb, \nabla \varphi \rangle+(c+\divv \bb)\varphi\right] dp\,d\tau.
\end{equation}
\end{proposition}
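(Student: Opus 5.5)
The plan is to test the weak formulation against an arbitrary $\varphi\in C_c^\infty((-\infty,\bar\tau)\times\mathbb{G})$ and reduce it to the weak formulation on $[0,\bar\tau)$ by a time cut-off. Fix $\varphi$ with support in $(-T,\bar\tau)\times K$ and split
\begin{equation*}
\left\langle \mathscr T_{\bb,c}(u),\varphi\right\rangle=\int_{-\infty}^{0}\int_{\mathbb{G}}u_0\,(-\partial_\tau\varphi)\,dp\,d\tau+\int_{0}^{\bar{\tau}}\int_{\mathbb{G}}u\left[-\partial_\tau\varphi+\langle \bb, \nabla \varphi \rangle+(c+\divv \bb)\varphi\right] dp\,d\tau,
\end{equation*}
where on $\tau<0$ we used $\bb=0$, $c=0$ and $u=u_0$ from \eqref{extensiondelellis}; note that $\divv\bb=0$ there as well, since the extension is done in the time variable only. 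The first integral equals $-\int_{\mathbb{G}}u_0(p)\varphi(0,p)\,dp$ by Fubini and the fundamental theorem of calculus in $\tau$; this is legitimate because $u_0\in L^\theta_{\mathrm{loc}}\subseteq L^1_{\mathrm{loc}}$ and $\varphi$ has compact support.

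Hence it suffices to show that the second integral coincides with $\left\langle\mathscr T_{u_0,\bb,c}(u),\varphi|_{[0,\bar\tau)\times\mathbb G}\right\rangle+\int_{\mathbb G}u_0\varphi(0,\cdot)\,dp$, which vanishes by the definition of distributional solution. The only subtlety is that $\varphi|_{[0,\bar\tau)\times\mathbb G}$ is smooth up to $\tau=0$ and compactly supported in $[0,\bar\tau)\times\mathbb G$. It is therefore the restriction of a function in $C_c^\infty([0,\bar\tau)\times\mathbb{G})$ (indeed $\varphi$ itself, restricted). So it is an admissible test function, and the identity $\langle \mathscr T_{u_0,\bb,c}(u),\varphi\rangle=0$ gives exactly
\begin{equation*}
\int_{0}^{\bar{\tau}}\int_{\mathbb{G}}u\left[-\partial_\tau\varphi+\langle \bb, \nabla \varphi \rangle+(c+\divv \bb)\varphi\right] dp\,d\tau=\int_{\mathbb{G}}u_0\varphi(0,\cdot)\,dp .
\end{equation*}
Adding the two contributions yields zero, as required.

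The main point to check, rather than an obstacle, is integrability: every product $u\langle\bb,\nabla\varphi\rangle$, $u(c+\divv\bb)\varphi$ and $u\,\partial_\tau\varphi$ must be integrable so that splitting the integral is justified. This follows from $u\in L^\infty(0,\bar\tau;L^\theta_{\mathrm{loc}})$, \eqref{b,c_assumptions}, and Hölder's inequality with exponents $\theta,\theta'$ on the compact set $K$. Note also that the argument uses no group structure: it is the same computation as in the Euclidean and Heisenberg cases (\cite[Proposition 4.3]{dpl1}). If one prefers to avoid the observation on admissible test functions, an alternative is to multiply $\varphi$ by a cut-off $\chi_k(\tau)$ increasing to $1_{(0,\infty)}$ and pass to the limit. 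In that case the boundary term $\int\!\!\int u\,\chi_k'\varphi$ must converge to $\int u_0\varphi(0,\cdot)$, which would require weak continuity of $u$ at $\tau=0$; for this reason the direct route above is preferable.
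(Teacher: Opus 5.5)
Your proposal is correct and is essentially the argument the paper relies on; the paper gives no proof of its own and defers to \cite[Proposition 4.3]{dpl1}. As there, you split the time integral at $\tau=0$, turn the negative-time part into $-\int_{\mathbb G}u_0\varphi(0,\cdot)\,dp$ by the fundamental theorem of calculus, and recognize the remainder as $\langle \mathscr T_{u_0,\bb,c}(u),\varphi\rangle=0$, using that the restriction of $\varphi$ to $[0,\bar\tau)\times\mathbb G$ is an admissible test function.
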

We recall the notion of renormalized solution, introduced by DiPerna and Lions in \cite{MR1022305}. A function $u\in L^\infty\left(0,\bar\tau;L^\theta(\mathbb{G})\right)$ is a \emph{renormalized solution} to \eqref{eq_transport_equation} if, for every $\beta \in C^1(\rr)$ with
$\beta'$ bounded, the function $\beta(u)$ is a distributional solution to
\begin{equation*} 
\begin{cases}
\dfrac{\partial \beta(u)}{\partial \tau}-\left\langle \bb,\nabla \beta (u)\right\rangle + cu\beta' (u) = 0 & \text{in $(0,\bar{\tau}) \times\mathbb{G}$}\\
\beta(u)(0,\cdot)=\beta(u_0) & \text{in $\mathbb{G}$,}
\end{cases}
\end{equation*}
namely if, for every $\varphi\in C^\infty_c\left([0,\bar\tau)\times\mathbb{G}\right)$,
\begin{equation*}
    -\int_{\mathbb{G}}\beta(u_0)(p)\varphi(0,p)\,dp
    +\int_{0}^{\bar{\tau}}\int_{\mathbb{G}}\beta(u)\left[-\partial_\tau \varphi+\left\langle \bb, \nabla \varphi\right \rangle+\divv \bb\,\varphi\right]+c u\beta'(u)\varphi\,dp\,d\tau =0.
\end{equation*}
If $\theta=\infty$, the condition that $\beta'$ is bounded is not required. Choosing $\beta$ to be the identity map, it is clear that renormalized solutions are distributional solutions. The converse implication does not hold in general (see \cite{DePauw_03}). However, if it holds for all $u_0$ and $c$ as in \eqref{b,c_assumptions}, we say that $\bb$ enjoys the \emph{renormalization property}. We show that contact vector fields with horizontal Sobolev regularity have the renormalization property.
\begin{theorem} \label{teo_distributional_implies_renormalized}
Let $\theta \in [1,\infty]$ and $u_0 \in L^\theta(\mathbb{G})$. Assume that $\bb \in L^1\left(0,\bar{\tau};\left[W^{1,\theta'}_{\hhh,\mathrm{loc}}(\mathbb{G})\right]^n\right)$ is a time-dependent contact vector field and $c \in L^1\left(0,\bar{\tau};L^{\theta'}_{\mathrm{loc}}(\mathbb{G})\right)$. Then any distributional solution $u \in L^\infty\left(0,\bar{\tau};L^\theta(\mathbb{G})\right)$ to \eqref{eq_transport_equation} 
is a renormalized solution. In particular, $\bb$ has the renormalization property.
\end{theorem}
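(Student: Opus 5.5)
We follow the DiPerna--Lions regularization scheme, with group convolution in space. First, by \Cref{propositiondelellis} we extend $\bb,c,u$ to negative times, so that the initial datum is encoded in $\mathscr T_{\bb,c}(u)=0$ on $(-\infty,\bar\tau)\times\mathbb G$. We mollify in space by setting $u_\eps(\tau,\cdot)=u(\tau,\cdot)*\rho_\eps$, and optionally also in time with a standard one-dimensional kernel. Testing the equation with $\varphi(\tau,q)=\psi(\tau)\rho_\eps(q^{-1}\cdot p)$ shows that $u_\eps$ satisfies \eqref{commutators_intro} pointwise a.e., with
\[
\mathscr C_\eps=\big(\langle \bb,\nabla u\rangle\big)*\rho_\eps-\langle \bb,\nabla u_\eps\rangle ,
\]
understood distributionally. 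Since $u_\eps$ is smooth in space and $W^{1,1}$ in time, the chain rule gives $\partial_\tau\beta(u_\eps)-\langle\bb,\nabla\beta(u_\eps)\rangle+cu_\eps\beta'(u_\eps)=\beta'(u_\eps)\mathscr C_\eps$. Letting $\eps\searrow0$ via \Cref{prop_group_mollification} then yields the renormalized equation, provided $\mathscr C_\eps\to0$ in $L^1_{\mathrm{loc}}$. Here $\beta'$ is bounded; when $\theta=\infty$ we use local boundedness instead. The initial condition is recovered from the extension as in \cite{dpl1}.

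The core step is the commutator. We write $\langle\bb,\nabla u\rangle=\sum b^i_\alpha X^i_\alpha u=\sum X^i_\alpha(b^i_\alpha u)-u\,\divv\bb$, using \eqref{eq_divergence}. Left-invariant derivatives commute with right convolution, so $X^i_\alpha(f*\rho_\eps)=f*X^i_\alpha\rho_\eps$. Using \Cref{lemscambiarederivateeinversione}, the symmetry of $\rho$ and \eqref{eq_derivative_mollification}, and changing variables $q=p\cdot\delta_\eps(w)$, we obtain
\[
\mathscr C_\eps(p)=\sum_{i,\alpha}\int_{\mathbb G}\frac{b^i_\alpha(p\cdot\delta_\eps w)-b^i_\alpha(p)}{\eps^i}\,u(p\cdot\delta_\eps w)\,\big((X^i_\alpha)^r\rho\big)(w)\,dw\;-\;(u\,\divv\bb)*\rho_\eps+u_\eps\divv\bb ,
\]
up to signs. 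The divergence terms converge to $0$ in $L^1_{\mathrm{loc}}$ because $\divv\bb\in L^1(L^{\theta'}_{\mathrm{loc}})$. In each remaining integrand we replace $b^i_\alpha(p)$ by $P^{i-1}_pb^i_\alpha(\delta_\eps w)$ and control the difference with \Cref{thm_difference_quotients}. This applies because $b^i_\alpha\in W^{i,\theta'}_{\hhh,\mathrm{loc}}$ by \Cref{rem_iterated_contact_components}. The bound \eqref{eq:priori11_lavorocarnot} with $\ell=i$ gives domination, Hölder against $u\in L^\theta$ handles the product with $u$, and the time integral is controlled by dominated convergence. The case $\theta'=\infty$ uses \eqref{eq:priori11_infinito_lavorocarnot} together with $L^1$ continuity of translations applied to $u$.

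What remains are the Taylor terms $T^k_pb^i_\alpha(\delta_\eps w)/\eps^i$ for $k\le i-1$. These are monomials $\eps^{|I_k|-i}w^{I_k}_{A_k}X^{I_k}_{A_k}b^i_\alpha(p)$ with $|I_k|<i$, and they are singular as $\eps\to0$; this is the main obstacle. The plan is to rewrite $(X^i_\alpha)^r$ through \eqref{eq_right_vector_fields}, inverted or applied so as to convert between left and right derivatives of $\rho$. One then integrates by parts in $w$, so that the monomials $w^{I_k}_{A_k}$ combined with the polynomial coefficients produce exactly the combinations of structure constants $c(i,I_k)^{B_k}_{B_{k-1}A_k}$ appearing in \eqref{eq_intro_remainders}. \Cref{peodneicjecneficnecijn} makes the bracketed coefficient vanish identically, so all singular terms cancel. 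Care is needed with the terms of homogeneous degree exactly matching, where $\int w^{I_k}X\rho$ is nonzero but the coefficient is killed by \eqref{eq_iterated_contact_components}. After these cancellations, $\mathscr C_\eps\to0$ in $L^1_{\mathrm{loc}}$, and the theorem follows.
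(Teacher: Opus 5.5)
Your DiPerna--Lions scheme and the regularity bookkeeping (\Cref{rem_iterated_contact_components}, and \Cref{thm_difference_quotients} with $\ell=i$) match the paper. However, the central step, namely the commutator identity and the cancellation of the singular terms, has a genuine gap.

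In a non-abelian group the two pieces of $\mathscr C_\eps$ are not integrated against the same derivative of $\rho$. One has $(X^i_\alpha(b^i_\alpha u))*\rho_\eps=(b^i_\alpha u)*(X^i_\alpha)^r\rho_\eps$, whereas $X^i_\alpha u_\eps=u*X^i_\alpha\rho_\eps$. Writing $(f*g)(p)=\int f(p\cdot q^{-1})g(q)\,dq$, substituting $q=\delta_\eps(w^{-1})$ and using \Cref{lemscambiarederivateeinversione} gives
\begin{equation*}
\mathscr C_\eps(p)=-\bigl((u\divv\bb)*\rho_\eps\bigr)(p)-\sum_{i,\alpha}\eps^{-i}\int_{\mathbb{G}}\Bigl[b^i_\alpha(p\cdot\delta_\eps(w))\,X^i_\alpha\rho(w)-b^i_\alpha(p)\,\bigl(X^i_\alpha\bigr)^r\rho(w)\Bigr]u(p\cdot\delta_\eps(w))\,dw .
\end{equation*}
To form a difference quotient you must use the inverse form of \eqref{eq_right_vector_fields},
\begin{equation*}
(X^i_\alpha)^r=X^i_\alpha+\sum_k\frac{(-1)^k}{k!}\sum c(I_k,i+\abs*{I_k})^{B_k}_{A_kB_{k-1}}\,x^{I_k}_{A_k}\,X^{i+\abs*{I_k}}_{\beta_k}.
\end{equation*}
This adds to $\mathscr C_\eps$ the terms
\begin{equation*}
\frac{(-1)^k}{k!}\,c(I_k,i+\abs*{I_k})^{B_k}_{A_kB_{k-1}}\,\frac{b^i_\alpha(p)}{\eps^{i}}\int_{\mathbb{G}}w^{I_k}_{A_k}\,u(p\cdot\delta_\eps(w))\,X^{i+\abs*{I_k}}_{\beta_k}\rho(w)\,dw .
\end{equation*}
Your displayed formula pairs both $b^i_\alpha(p\cdot\delta_\eps(w))$ and $b^i_\alpha(p)$ with $(X^i_\alpha)^r\rho$, and so drops exactly these terms. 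They are the only terms the singular Taylor terms can cancel against. For example, in $\mathbb{H}^1$ the term $\eps^{-1}\sum_\alpha X^1_\alpha b^2_1(p)\int w^1_\alpha\, u(p\cdot\delta_\eps(w))\,X^2_1\rho(w)\,dw$ coming from $T^1_pb^2_1$ would survive uncancelled. Converting $(X^i_\alpha)^r$ afterwards inside your formula does not bring these terms back: it only produces non-singular first-order quotients.

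The cancellation also cannot come from integrating by parts in $w$, as you suggest. The singular integrals contain $u(p\cdot\delta_\eps(w))$, which cannot be differentiated, and replacing it by $u(p)$ costs an error multiplied by $\eps^{\abs*{I_k}-i}\to\infty$. What is needed is an exact pointwise identity:
\begin{itemize}
\item use \eqref{eq_skew_structural_constants} to turn $(-1)^kc(I_k,i+\abs*{I_k})^{B_k}_{A_kB_{k-1}}$ into $c(i+\abs*{I_k},I_k)^{B_k}_{B_{k-1}A_k}$;
\item relabel $i+\abs*{I_k}\mapsto i$ and write $\eps^{-i}w^{I_k}_{A_k}=\eps^{-(i+\abs*{I_k})}\delta_\eps(w^{I_k}_{A_k})$;
\item note that $T^\ell_pb^i_\beta(\delta_\eps(w))$ carries the same weights $1/k!$.
\end{itemize}
Then the coefficient of each integral $\int_{\mathbb{G}}\delta_\eps(w^{I_k}_{A_k})\,u(p\cdot\delta_\eps(w))\,\eps^{-i}X^i_{\beta_k}\rho(w)\,dw$ with $\abs*{I_k}<i$ is, up to $1/k!$, the bracket in \eqref{eq_intro_remainders}. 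That bracket vanishes identically by \eqref{eq_iterated_contact_components}.

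Your divergence bookkeeping is also wrong. The term $u_\eps\divv\bb$ in your formula has no source. Moreover, $\eps^{-i}\bigl(b^i_\alpha(p\cdot\delta_\eps(w))-P^{i-1}_pb^i_\alpha(\delta_\eps(w))\bigr)=R^w_{i,\eps}b^i_\alpha(p)+T^i_pb^i_\alpha(w)$ tends to $T^i_pb^i_\alpha(w)$, not to $0$; \Cref{thm_difference_quotients} only controls $R^w_{i,\eps}$. The resulting limit is
\begin{equation*}
u(p)\sum_{i,\alpha}\int_{\mathbb{G}}T^i_pb^i_\alpha(w)\,X^i_\alpha\rho(w)\,dw=-u\divv\bb,
\end{equation*}
computed via \eqref{eq_derivatives_coordinates} and an integration by parts. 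That integration by parts is legitimate here only because $u(p)$ no longer depends on $w$. This limit is what cancels $\lim_{\eps\searrow0}(u\divv\bb)*\rho_\eps$. So the terms of homogeneous degree exactly $i$ are not killed by the contact condition: they produce the divergence, and the contact identities kill only the terms of degree less than $i$. Once $\mathscr C_\eps\to0$ is established, your renormalization step is the standard one and is fine.
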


\subsection{Regularization}\label{sec_refgogogog}
The proof of \Cref{teo_distributional_implies_renormalized} follows the mollification  scheme of \cite{MR1022305}: mollifying a distributional solution to the transport equation yields a smooth (in space) solution up to an error term, known as commutator. Here, mollification is meant with respect to the group convolution as in \Cref{subsec_groupconv}. 
The next result provides an integral representation of the commutator. Its proof is analogous to \cite[Proposition 4.9]{dpl1}.
\begin{proposition}\label{prop_formadelresto}
 For $\theta \in [1,\infty]$, assume \eqref{b,c_assumptions}. Fix a distributional solution $u \in L^\infty\left(0,\bar{\tau}; L^\theta(\mathbb{G})\right)$ to \eqref{eq_transport_equation} and extend $\bb,c,u$ to $(-\infty,\bar\tau) \times \mathbb{G}$ as in \eqref{extensiondelellis}. For every $\varepsilon>0$, let $u_\varepsilon = u*\rho_\varepsilon$ be the spatial group mollification of $u$ by $\rho_\varepsilon$, with $\rho_\varepsilon$ as in \eqref{eq_def_mollifier}. Then, the distribution $\mathscr T_{\bb,c}(u_\varepsilon)$ in \eqref{equazioneestesasenzadato} is representable by integration of the \emph{commutator} $\mathscr C_\varepsilon$, i.e.,
  \begin{equation*}
        \left\langle \mathscr T_{\bb,c}(u_\varepsilon),\varphi\right\rangle=\int_{-\infty}^{\bar\tau}\int_{\mathbb{G}}\varphi(\tau,p)\mathscr C_\varepsilon(\tau,p)\,dp\,d\tau\qquad\text{for every $\varphi\in C^\infty_c((-\infty,\bar\tau)\times\mathbb{G})$,}
  \end{equation*}
where
$\mathscr C_\varepsilon=\mathscr C_\varepsilon^1+\mathscr C_\varepsilon^2\in L^1\left(-\infty,\bar\tau,L_{\mathrm{loc}}^1(\mathbb{G})\right)$ is defined by 
\begin{align*}
    \mathscr C_\eps^1(\tau,p)&\coloneqq -\left((u\divv \bb)*\rho_\varepsilon\right)(\tau, p)-\int_{\mathbb{G}}u\left(\tau, p\cdot q^{-1} \right)\sum_{i=1}^s \sum_{\alpha=1}^{n_i} \left[b^i_\alpha(\tau, p) X^i_\alpha \rho_\varepsilon(q)-b^i_\alpha \left(\tau, p\cdot q^{-1} \right) \left(X^i_\alpha\right)^r \rho_\varepsilon(q)\right] dq,\\
     \mathscr C_\eps^2(\tau,p)&\coloneqq \int_{\mathbb{G}}u\left(\tau, p\cdot q^{-1}\right )\rho_\varepsilon(q)\left[c(\tau, p )-c\left(\tau, p\cdot q^{-1}\right)\right] dq
\end{align*}
for a.e.~$(\tau,p)\in(-\infty,\bar\tau)\times\mathbb{G}$.
\end{proposition}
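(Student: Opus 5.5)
The plan is to mollify the equation in space, test against a fixed $\varphi\in C^\infty_c((-\infty,\bar\tau)\times\mathbb{G})$, and move all derivatives onto the test function and the kernel. Throughout, $u$, $\bb$, $c$ are extended as in \eqref{extensiondelellis}, so that by \Cref{propositiondelellis} we have $\langle\mathscr T_{\bb,c}(u),\psi\rangle=0$ for every $\psi\in C_c^\infty((-\infty,\bar\tau)\times\mathbb{G})$.

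\textbf{Step 1 (the time derivative).} Since $u_\eps(\tau,p)=\int u(\tau,q)\rho_\eps(q^{-1}\cdot p)\,dq$, Fubini gives
\[
\int\!\!\int u_\eps\,\partial_\tau\varphi=\int\!\!\int u\,\partial_\tau(\varphi*\check\rho_\eps),
\]
where, by the symmetry $\rho(p)=\rho(p^{-1})$, $\varphi*\check\rho_\eps=\varphi*\rho_\eps$ is again an admissible test function. So we test the equation for $u$ with $\psi=\varphi*\rho_\eps$. This converts the time-derivative term, and the term $\int u(c+\divv\bb)\psi$, into expressions involving $u$ only.

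\textbf{Step 2 (the transport term).} Next we compute $\langle\bb,\nabla\psi\rangle=\sum_{i,\alpha} b^i_\alpha X^i_\alpha\psi$. Left-invariant fields commute with left translations, so $X^i_\alpha(\varphi*\rho_\eps)=\varphi*(X^i_\alpha\rho_\eps)$. We rewrite this in the form $\int\varphi(p)\,(\cdot)\,dp$ by Fubini and the change of variables $q\mapsto p\cdot q^{-1}$. Here \Cref{lemscambiarederivateeinversione} turns derivatives of the reflected kernel into right-invariant derivatives, $X^i_\alpha(\rho_\eps\circ\iota)=-((X^i_\alpha)^r\rho_\eps)\circ\iota$; this is the source of the $(X^i_\alpha)^r\rho_\eps(q)$ factor multiplying $b^i_\alpha(p\cdot q^{-1})$. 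On the other side, $\langle\mathscr T_{\bb,c}(u_\eps),\varphi\rangle$ contains $\int u_\eps\, b^i_\alpha X^i_\alpha\varphi$. Moving $X^i_\alpha$ off $\varphi$ (the fields $X^i_\alpha$ are divergence-free) produces $-\int\varphi\,\big(b^i_\alpha X^i_\alpha u_\eps+u_\eps X^i_\alpha b^i_\alpha\big)$, where $X^i_\alpha u_\eps=\int u(p\cdot q^{-1})X^i_\alpha\rho_\eps(q)\,dq$. The term $u_\eps X^i_\alpha b^i_\alpha$ cancels against the $u_\eps\divv\bb$ contribution by \eqref{eq_divergence}. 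Subtracting the vanishing quantity $\langle\mathscr T_{\bb,c}(u),\varphi*\rho_\eps\rangle$ from $\langle\mathscr T_{\bb,c}(u_\eps),\varphi\rangle$, the $\partial_\tau$ terms cancel exactly. What remains is:
\begin{itemize}
\item[(i)] $-(u\divv\bb)*\rho_\eps$;
\item[(ii)] the difference $b^i_\alpha(p)X^i_\alpha\rho_\eps(q)-b^i_\alpha(p\cdot q^{-1})(X^i_\alpha)^r\rho_\eps(q)$ integrated against $u(p\cdot q^{-1})$;
\item[(iii)] the $c$ terms, which combine into $\mathscr C^2_\eps$.
\end{itemize}

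\textbf{Step 3 (justification) — the main obstacle.} The delicate part is justifying the computation with $\bb$ only $L^1_tL^{\theta'}_{\mathrm{loc}}$ rather than smooth. Steps 1--2 should be carried out with duality pairings $L^\theta$--$L^{\theta'}$ on compact sets, which are finite because $\rho_\eps$ has compact support. Using that $u_\eps$ is smooth in space, the product rule for $X^i_\alpha(b^i_\alpha\varphi)$ only needs to hold distributionally, and it does once $\divv\bb\in L^1L^{\theta'}_{\mathrm{loc}}$. Sign bookkeeping under inversion and right translations, using \eqref{eq_Lebesgue_Haar}, is the step I expect to need the most care; it follows verbatim the Heisenberg computation of \cite[Proposition 4.9]{dpl1}. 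Finally, $\mathscr C_\eps\in L^1(L^1_{\mathrm{loc}})$ follows from H\"older's inequality and \Cref{prop_group_mollification}.
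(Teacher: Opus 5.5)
Your proposal is correct and follows the standard DiPerna--Lions duality argument: you test the extended equation for $u$ against $\varphi*\rho_\eps$, use the symmetry of $\rho$, and invoke \Cref{lemscambiarederivateeinversione} to produce the right-invariant derivatives. This is exactly the computation the paper defers to via \cite[Proposition 4.9]{dpl1}. One point needs more care: under \eqref{b,c_assumptions} alone, \eqref{eq_divergence} cannot be cited term by term, so the cancellation must be done for the sum, $\sum_{i,\alpha}\langle X^i_\alpha b^i_\alpha,u_\eps\varphi(\tau,\cdot)\rangle=\int_{\mathbb G}\divv\bb\,u_\eps\varphi\,dp$, which is legitimate because the $X^i_\alpha$ are divergence-free and $u_\eps\varphi(\tau,\cdot)\in C^\infty_c(\mathbb G)$.
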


The following lemma constitutes the crucial step in the proof of \Cref{teo_distributional_implies_renormalized}:  the commutator vanishes in the limit, provided that the velocity field is a contact vector field. Recall that, if $\bb \in \left[W^{1,\theta}_{\hhh,\mathrm{loc}}(\mathbb{G})\right]^n$ is a contact vector field, then \eqref{eq_iterated_contact_components} holds. Recall also the notation \eqref{eq_notation_multi-index} and \eqref{eq_notation_multi-index_2}.
\begin{lemma} \label{lem_commutator}
Let $\theta \in [1,\infty]$. Fix $u_0 \in L^\theta_{\mathrm{loc}}(\mathbb{G})$. Let $\bb \in L^1\left(0,\bar{\tau};\left[W^{1,\theta'}_{\hhh,\mathrm{loc}}(\mathbb{G})\right]^n\right)$ be a time-dependent contact vector field and $c \in L^1\left(0,\bar{\tau};L^{\theta'}_{\mathrm{loc}}(\mathbb{G})\right)$. For $u \in L^\infty\left(0,\bar{\tau}; L^\theta_{\mathrm{loc}}(\mathbb{G})\right)$, extend $\bb,c,u$ to $(-\infty,\bar\tau) \times \mathbb{G}$ as in \eqref{extensiondelellis}. Then  $\mathscr C_\varepsilon \to 0$ in $L^1\left(-\infty,\bar\tau; L^1_{\mathrm{loc}}(\mathbb{G})\right)$ as $\varepsilon \searrow 0$. 
\end{lemma}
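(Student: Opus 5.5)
The plan is to treat the two pieces of the commutator in \Cref{prop_formadelresto} separately. First I would reduce to the case where $\bb$, $c$ are compactly supported in space, or at least work on a fixed compact set $K$. I would also use that all estimates are linear in $\bb$ and $c$: a bound of the form $\norm{\mathscr C_\eps}_{L^1(K)}\lesssim \norm{u}_{L^\theta}\,\norm{\bb}_{W^{1,\theta'}_\hhh}$, uniform in $\eps$, lets me approximate in time by dominated convergence. The term $\mathscr C^2_\eps$ is standard. After the change of variables $q=\delta_\eps(w)$, Hölder's inequality bounds it by $\norm{u}_{L^\theta}\int\rho(w)\,\norm{c(\cdot)-c(\cdot\,\delta_\eps(w)^{-1})}_{L^{\theta'}}\,dw$, which tends to $0$ by \Cref{lem_continuity_translations} when $\theta'<\infty$. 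If $\theta'=\infty$, i.e.\ $\theta=1$, I would swap the roles and use the continuity of translations of $u$ in $L^1$.

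For $\mathscr C^1_\eps$, I would substitute $q=\delta_\eps(w)^{-1}$, so that $p\cdot q^{-1}=p\cdot\delta_\eps(w)$. By the symmetry of $\rho$ and \Cref{lemscambiarederivateeinversione}, both $X^i_\alpha\rho_\eps(q)$ and $(X^i_\alpha)^r\rho_\eps(q)$ become $\eps^{-Q-i}$ times a right/left derivative of $\rho$ evaluated at $w$. Next I rewrite each left-invariant field in terms of right-invariant ones via \eqref{eq_right_vector_fields}. The coefficients $x^{I_k}_{A_k}(q)$ are homogeneous, so they equal $\pm\eps^{\abs{I_k}}w^{I_k}_{A_k}$. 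Collecting the coefficient of $\eps^{-i}(X^i_\beta)^r\rho(w)$ gives
\[
b^i_\beta(p)-b^i_\beta(p\cdot\delta_\eps(w))+\sum_{k}\frac{1}{k!}\sum_{\abs{I_k}<i}(\text{const})\,\eps^{\abs{I_k}}w^{I_k}_{A_k}\, b^{i-\abs{I_k}}_\alpha(p).
\]
I then add and subtract $P^{i-1}_p b^i_\beta(\delta_\eps(w))$. The Taylor terms $T^k_p b^i_\beta$ with $k\le i-1$ involve $X^{I_j}_{A_j}b^i_\beta(p)$ with $\abs{I_j}\le i-1$. By \Cref{peodneicjecneficnecijn}, these equal exactly the structural-constant combinations of $b^{i-\abs{I_j}}_\alpha(p)$. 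So, after matching indices, signs and factorials, the brackets \eqref{eq_intro_remainders} vanish identically. What is left is
\[
-\int \frac{b^i_\beta(p\cdot\delta_\eps(w))-P^{i}_pb^i_\beta(\delta_\eps(w))}{\eps^i}\,u(p\cdot\delta_\eps(w))\,(X^i_\beta)^r\rho(w)\,dw
\]
plus the top-order Taylor term $\eps^{-i}T^i_pb^i_\beta(\delta_\eps(w))$. The latter is $O(1)$: it equals $\sum w^{I}_{A}\,X^{I}_{A}b^i_\beta(p)$ with $\abs{I}=i$.

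The difference-quotient term goes to $0$ in $L^1_{\mathrm{loc}}$. Indeed, by \Cref{rem_iterated_contact_components} we have $b^i_\beta\in W^{i,\theta'}_{\hhh,\mathrm{loc}}$, and \Cref{thm_difference_quotients} with $\ell=i$ gives convergence to $0$ in $L^{\theta'}$ together with the domination \eqref{eq:priori11_lavorocarnot}, or \eqref{eq:priori11_infinito_lavorocarnot} when $\theta'=\infty$. Combined with Hölder's inequality and dominated convergence in $w$ over the compact support of $\rho$, this handles that term. For the top-order terms I use that $u(\cdot\,\delta_\eps(w))\to u$ in $L^\theta_{\mathrm{loc}}$, again by \Cref{lem_continuity_translations}, with the case $\theta=\infty$ handled by weak-$*$ arguments as in DiPerna--Lions. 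Their limit is then $u(p)\sum X^{I}_{A}b^i_\beta(p)\int w^{I}_{A}(X^i_\beta)^r\rho(w)\,dw$. Integrating by parts with the divergence-free field $(X^i_\beta)^r$ and using \eqref{eq_derivatives_coordinates}, only the pieces with $k=1$, $I=(i)$, $A=(\beta)$ survive, each contributing $\mp1$. The total is therefore $\pm u\sum_{i,\beta}X^i_\beta b^i_\beta=\pm u\,\divv\bb$ by \eqref{eq_divergence}, which cancels the term $-(u\divv\bb)*\rho_\eps\to -u\divv\bb$.

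I expect the main obstacle to be the exact bookkeeping. One must check that the coefficients coming from the adjoint expansion \eqref{eq_right_vector_fields}, evaluated at $q=-\delta_\eps(w)$ with its sign $(-1)^k$ and factor $1/k!$, match those produced by $T^k_p$ via \eqref{eq_Taylor_coefficients} and \eqref{eq_iterated_contact_components}, including the order of the factors in $c(I_k,\cdot)$ versus $c(\cdot,I_k)$. I would check this first on the Engel example, and then do it in general by comparing both sides as Taylor expansions of $\tau\mapsto b(p\cdot\exp(\delta_\tau W))$. A secondary point is verifying that the $k\ge2$, $\abs{I}=i$ top-order integrals genuinely vanish.
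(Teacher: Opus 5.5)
Your proposal is correct and follows the paper's proof essentially step for step. You expand $X^i_\alpha\rho_\eps(q)$ via \eqref{eq_right_vector_fields}, change variables, and split into three parts: the Taylor remainder, which vanishes by \Cref{rem_iterated_contact_components} and \Cref{thm_difference_quotients}; the lower-order Taylor terms, which cancel identically against the adjoint terms by \Cref{peodneicjecneficnecijn} (the paper absorbs your $(-1)^k$ through \eqref{eq_skew_structural_constants}); and the top-order term, which yields $-u\divv\bb$ by integration by parts and \eqref{eq_derivatives_coordinates}.

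Minor remarks: the derivatives of $\rho$ you collect are actually the left-invariant $X^i_\beta\rho(w)$, which is harmless since right-invariant fields also satisfy \eqref{eq_derivatives_coordinates}. For $\theta'=\infty$, \eqref{eq:priori11_infinito_lavorocarnot} gives only a uniform bound, not vanishing. You must therefore combine it with the $L^1$-continuity of translations of $u$ and the $L^r_{\mathrm{loc}}$-convergence of $R^w_{i,\eps}b^i_\alpha$ for finite $r$, as in \cite{dpl1}.
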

\begin{proof}
Every argument will be carried out for $\tau$ fixed, so we assume without loss of generality that $u,\bb$ and $c$ do not depend on $\tau$. We only prove the local $L^1$-convergence of $\mathscr C^1_\varepsilon$, since the proof for $\mathscr C^2_\varepsilon$ is identical to the corresponding case in \cite[Lemma 4.10]{dpl1}. For $p \in \mathbb{G}$, one has
\begin{equation*}
\begin{split}
    -&\mathscr C_\eps^1(p) = \left((u\divv \bb)*\rho_\varepsilon\right)(p)+\int_{\mathbb{G}}u\left(p\cdot q^{-1} \right)\sum_{i=1}^s \sum_{\alpha=1}^{n_i} \left[b^i_\alpha(p) X^i_\alpha \rho_\varepsilon(q)-b^i_\alpha \left(p\cdot q^{-1} \right) \left(X^i_\alpha\right)^r \rho_\varepsilon(q)\right] dq\\
    \overset{\eqref{eq_right_vector_fields}}&{=} \left((u\divv \bb)*\rho_\varepsilon\right)(p)+ \sum_{i=1}^s \sum_{\alpha=1}^{n_i} \biggl\{ \int_{\mathbb{G}} \left[b^i_\alpha(p) - b^i_\alpha \left(p\cdot q^{-1} \right) \right] u\left(p\cdot q^{-1} \right) \left(X^i_\alpha\right)^r \rho_\varepsilon(q)\,dq\\
    & + \sum_{k=1}^{s-i} \frac{1}{k!} \sum_{\abs*{I_k} \leq s-i} \sum_{\mathcal A_k(I_k)} \sum_{\mathcal B_k(i,I_k)} c(I_k,i+\abs*{I_k})_{A_k B_{k-1}}^{B_k} \int_{\mathbb{G}} b^i_\alpha(p) u\left(p \cdot q^{-1}\right) q^{I_k}_{A_k} \left(X^{i+\abs*{I_k}}_{\beta_k}\right)^r \rho_\eps(q)\,dq \biggr\}\\
    \overset{\eqref{eq_derivative_mollification}}&{=} \left((u\divv \bb)*\rho_\varepsilon\right)(p)+ \sum_{i=1}^s \sum_{\alpha=1}^{n_i} \biggl\{ \int_{\mathbb{G}} \frac{b^i_\alpha(p) - b^i_\alpha \left(p\cdot q^{-1} \right)}{\eps^{Q+i}} u\left(p\cdot q^{-1} \right) \left(\left(X^i_\alpha\right)^r \rho\right)\left(\delta_{\frac{1}{\eps}}(q)\right) dq\\
    & + \sum_{k=1}^{s-i} \frac{1}{k!} \sum_{\abs*{I_k} \leq s-i} \sum_{\mathcal A_k(I_k)} \sum_{\mathcal B_k(i,I_k)} c(I_k,i+\abs*{I_k})_{A_k B_{k-1}}^{B_k} \int_{\mathbb{G}} \frac{b^i_\alpha(p)}{\eps^{Q+i+\abs*{I_k}}} u\left(p \cdot q^{-1}\right) q^{I_k}_{A_k} \left(\left(X^{i+\abs*{I_k}}_{\beta_k}\right)^r \rho\right)\left(\delta_{\frac{1}{\eps}}(q)\right) dq \biggr\}.
\end{split}
\end{equation*}
We perform the change of variables $w = \delta_{\frac{1}{\varepsilon}}(q)$. Observe that
\begin{equation*}
q^{I_k}_{A_k}=q^{i_1}_{\alpha_1} \dots q^{i_k}_{\alpha_k} = \eps^{i_1+\dots+i_k} w^{i_1}_{\alpha_1} \dots w^{i_k}_{\alpha_k}=\eps^{\abs*{I_k}}w^{I_k}_{A_k}.
\end{equation*}
Since $\left(\left(X^i_\alpha \right)^r\rho\right)\left(w^{-1}\right)=-X^i_\alpha \rho(w)$ for every $i=1,\ldots,s$ and $\alpha=1,\ldots,n_i$ by \Cref{lemscambiarederivateeinversione} and \eqref{eq_mollificatori}, then
\begin{equation*}
\begin{split}
    -\mathscr C_\eps^1(p) \overset{\eqref{eq_dilation_Lebesgue}}&{=} \left((u\divv \bb)*\rho_\varepsilon\right)(p)+ \sum_{i=1}^s \sum_{\alpha=1}^{n_i} \biggl\{ \int_{\mathbb{G}} \frac{b^i_\alpha(p) - b^i_\alpha \left(p\cdot \delta_\eps\left(w^{-1}\right) \right)}{\eps^i} u\left(p\cdot \delta_\eps\left(w^{-1}\right) \right) \left(X^i_\alpha\right)^r \rho(w)\,dw\\
    & + \sum_{k=1}^{s-i} \frac{1}{k!} \sum_{\abs*{I_k} \leq s-i} \sum_{\mathcal A_k(I_k)} \sum_{\mathcal B_k(i,I_k)} c(I_k,i+\abs*{I_k})_{A_k B_{k-1}}^{B_k} \int_{\mathbb{G}} \frac{b^i_\alpha(p)}{\eps^i} u\left(p \cdot \delta_\eps\left(w^{-1}\right) \right) w^{I_k}_{A_k} \left(X^{i+\abs*{I_k}}_{\beta_k}\right)^r \rho(w)\,dw \biggr\}\\
    & = \left((u\divv \bb)*\rho_\varepsilon\right)(p)+ \sum_{i=1}^s \sum_{\alpha=1}^{n_i} \biggl\{ \underbrace{\int_{\mathbb{G}} \frac{b^i_\alpha (p\cdot \delta_\eps(w)) - b^i_\alpha(p)}{\eps^i} u(p\cdot \delta_\eps(w)) X^i_\alpha \rho(w)\,dw}_{\eqqcolon C^i_\alpha(p)}\\
    & \underbrace{-\sum_{k=1}^{s-i} \frac{1}{k!} \sum_{\abs*{I_k} \leq s-i} \sum_{\mathcal A_k(I_k)} \sum_{\mathcal B_k(i,I_k)} (-1)^k c(I_k,i+\abs*{I_k})_{A_k B_{k-1}}^{B_k} \int_{\mathbb{G}} \frac{b^i_\alpha(p)}{\eps^i} u(p \cdot \delta_\eps(w)) w^{I_k}_{A_k} X^{i+\abs*{I_k}}_{\beta_k} \rho(w)\,dw}_{\eqqcolon D^i_\alpha(p)} \biggr\}.
\end{split}
\end{equation*}
First, we focus on $C^i_\alpha$. Recalling \eqref{eq_Taylor_polynomial}, we have
\begin{equation*}
\begin{split}
C^i_\alpha(p) & = \underbrace{\int_{\mathbb{G}} \frac{b^i_\alpha(p \cdot \delta_\eps(w))-P^{i-1}_p b^i_\alpha (\delta_\eps(w))}{\eps^i} u(p \cdot \delta_\eps(w)) X^i_\alpha \rho(w)\,dw}_{\eqqcolon F^i_\alpha(p)} \\
& \quad + \underbrace{\int_{\mathbb{G}} \sum_{\ell=1}^{i-1} T^\ell_p b^i_\alpha(\delta_\eps(w)) \eps^{-i} u(p \cdot \delta_\eps(w)) X^i_\alpha \rho(w)\,dw}_{\eqqcolon G^i_\alpha(p)},
\end{split}
\end{equation*}
where we agree that, when $i=1$, $P^0_p b^1_\alpha(\delta_\eps(w)) = b^1_\alpha(p)$ and $G^1_\alpha(p)=0$ for every $\alpha=1,\dots,n_1$. We compute the $L^1_{\mathrm {loc}}$-limit of $F^i_\alpha$. Assume first $\theta'<\infty$. Since $\bb$ is a contact vector field, $b^i_\alpha \in W^{i,\theta'}_{\hhh,\mathrm{loc}}(\mathbb{G})$ by \Cref{rem_iterated_contact_components}. Recalling \eqref{eq_higher_order_derivatives} and \eqref{eq_difference_quotients_notation}, the dominated convergence theorem and \Cref{thm_difference_quotients} imply that, for every $K \Subset \mathbb{G}$,
\begin{equation*}
\begin{split}
    \int_K & \abs*{\int_{\mathbb{G}} \left[\frac{b^i_\alpha(p \cdot \delta_\eps(w))-P^{i-1}_p b^i_\alpha (\delta_\eps(w))}{\eps^i} u(p \cdot \delta_\eps(w)) - T^i_p b^i_\alpha(w) u(p) \right] X^i_\alpha \rho(w)\,dw}\,dp\\
    & \leq \int_{\mathbb{G}} \abs*{X^i_\alpha \rho(w)} \left(\int_K \abs*{\frac{b^i_\alpha(p \cdot \delta_\eps(w))-P^{i-1}_p b^i_\alpha (\delta_\eps(w))}{\eps^i} u(p \cdot \delta_\eps(w))- T^i_p b^i_\alpha(w) u(p)} \,dp \right)\,dw \xrightarrow[\varepsilon \searrow 0]{} 0,
\end{split}
\end{equation*}
where we also used the $L^\theta$-continuity of right translations (\Cref{lem_continuity_translations}) to replace $u(p \cdot \delta_\varepsilon(w))$ with $u(p)$ before applying \Cref{thm_difference_quotients}. If instead $\theta'=\infty$, the argument is similar to the corresponding case in the proof of \cite[Lemma 4.10]{dpl1}.
Hence,
\begin{equation*}
\begin{split}
    \lim_{\eps \searrow 0}^{L^1_{\mathrm{loc}}} \sum_{i=1}^s \sum_{\alpha=1}^{n_i} F^i_\alpha(p) & = \sum_{i=1}^s \sum_{\alpha=1}^{n_i} \int_{\mathbb{G}} T^i_p b^i_\alpha(w) u(p) X^i_\alpha \rho(w) \,dw\\
    \overset{\eqref{eq_higher_order_derivatives}}&{=} \sum_{i=1}^s \sum_{\alpha=1}^{n_i} \sum_{j=1}^i \frac{1}{j!} \sum_{\substack{i_1,\dots,i_j=1 \\ i_1+\dots+i_j=i}}^s \sum_{\alpha_1=1}^{n_{i_1}} \dots \sum_{\alpha_j=1}^{n_{i_j}} X^{i_1}_{\alpha_1} \dots X^{i_j}_{\alpha_j} b^i_\alpha(p) u(p) \int_{\mathbb{G}} w^{i_1}_{\alpha_1} \dots w^{i_j}_{\alpha_j} X^i_\alpha \rho(w) \,dw\\
    \overset{\eqref{eq_derivatives_coordinates}}&{=} \sum_{i=1}^s \sum_{\alpha=1}^{n_i} \Biggl\{\sum_{\alpha_1=1}^{n_i} X^i_{\alpha_1} b^i_\alpha(p) u(p) \int_{\mathbb{G}} w^i_{\alpha_1} X^i_{\alpha} \rho(w) \,dw\\
    & \quad + \sum_{j=2}^i \frac{1}{j!} \sum_{\substack{i_1,\dots,i_j=1 \\ i_1+\dots+i_j=i}}^s \sum_{\alpha_1=1}^{n_{i_1}} \dots \sum_{\alpha_j=1}^{n_{i_j}} X^{i_1}_{\alpha_1} \dots X^{i_j}_{\alpha_j} b^i_\alpha(p) u(p) \underbrace{\int_{\mathbb{G}} X^i_\alpha \left(w^{i_1}_{\alpha_1} \dots w^{i_j}_{\alpha_j}\rho(w)\right) dw}_{=0}\Biggr\}\\
    \overset{\eqref{eq_derivatives_coordinates}}&{=} \sum_{i=1}^s \sum_{\alpha=1}^{n_i} \sum_{\substack{\alpha_1=1 \\ \alpha_1 \neq \alpha}}^{n_i} X^i_{\alpha_1} b^i_\alpha(p) u(p) \underbrace{\int_{\mathbb{G}} X^i_{\alpha} \left(w^i_{\alpha_1}\rho(w)\right) dw}_{=0} + \sum_{i=1}^s \sum_{\alpha=1}^{n_i} X^i_{\alpha} b^i_\alpha(p) u(p) \int_{\mathbb{G}} w^i_{\alpha} X^i_{\alpha} \rho(w) \,dw\\
    & = \sum_{i=1}^s \sum_{\alpha=1}^{n_i} X^i_{\alpha} b^i_\alpha(p) u(p) \biggl(\underbrace{\int_{\mathbb{G}} X^i_{\alpha} \left(w^i_{\alpha}\rho(w)\right) dw}_{=0} - \int_{\mathbb{G}} \rho(w) \underbrace{X^i_\alpha w^i_{\alpha}}_{\overset{\eqref{eq_derivatives_coordinates}}{=}1} \,dw\biggr)\\
    & = -u(p) \sum_{i=1}^s \sum_{\alpha=1}^{n_i} X^i_{\alpha} b^i_\alpha(p)\\
    \overset{\eqref{eq_divergence}}&{=} -u(p) \divv \bb(p).
\end{split}
\end{equation*}
It remains to analyze $D^i_\alpha(p)+G^i_\alpha(p)$. Observe that
\begin{equation*}
\begin{split}
    \sum_{i=1}^s & \sum_{\alpha=1}^{n_i} \left[D^i_\alpha(p)+G^i_\alpha(p)\right] \\
    & = -\sum_{i=1}^{s-1} \sum_{\alpha=1}^{n_i} \sum_{k=1}^{s-i} \frac{1}{k!} \sum_{\abs*{I_k} \leq s-i} \sum_{\mathcal A_k(I_k)} \sum_{\mathcal B_k(i,I_k)} (-1)^k c(I_k,i+\abs*{I_k})_{A_k B_{k-1}}^{B_k} \int_{\mathbb{G}} \frac{b^i_\alpha(p)}{\eps^i} u(p \cdot \delta_\eps(w)) w^{I_k}_{A_k} X^{i+\abs*{I_k}}_{\beta_k} \rho(w)\,dw\\
    & \quad + \sum_{i=2}^s \sum_{\alpha=1}^{n_i} \sum_{\ell=1}^{i-1} \int_{\mathbb{G}} T^\ell_p b^i_\alpha(\delta_\eps(w)) \eps^{-i} u(p \cdot \delta_\eps(w)) X^i_\alpha \rho(w)\,dw.
\end{split}
\end{equation*}
For the first term, we get
\begin{equation*}
\begin{split}
    & \sum_{i=1}^{s-1} \sum_{\alpha=1}^{n_i} \sum_{k=1}^{s-i} \frac{1}{k!} \sum_{\abs*{I_k} \leq s-i} \sum_{\mathcal A_k(I_k)} \sum_{\mathcal B_k(i,I_k)} (-1)^k c(I_k,i+\abs*{I_k})_{A_k B_{k-1}}^{B_k} \int_{\mathbb{G}} \frac{b^i_\alpha(p)}{\eps^i} u(p \cdot \delta_\eps(w)) w^{I_k}_{A_k} X^{i+\abs*{I_k}}_{\beta_k} \rho(w)\,dw\\
    \overset{\eqref{eq_skew_structural_constants}}&{=} \sum_{i=1}^{s-1} \sum_{\alpha=1}^{n_i} \sum_{k=1}^{s-i} \frac{1}{k!} \sum_{\abs*{I_k} \leq s-i} \sum_{\mathcal A_k(I_k)} \sum_{\mathcal B_k(i,I_k)} c(i+\abs*{I_k},I_k)_{B_{k-1} A_k}^{B_k} \frac{b^i_\alpha(p)}{\eps^{i+\abs*{I_k}}} \int_{\mathbb{G}} u(p \cdot \delta_\eps(w)) \delta_\eps\left(w^{I_k}_{A_k}\right) X^{i+\abs*{I_k}}_{\beta_k} \rho(w)\,dw\\
    & = \sum_{i=1}^{s-1} \sum_{\alpha=1}^{n_i} \sum_{\ell=1}^{s-i} \sum_{k=1}^\ell \frac{1}{k!} \sum_{\abs*{I_k}=\ell} \sum_{\mathcal A_k(I_k)} \sum_{\mathcal B_k(i,I_k)} c(i+\ell,I_k)_{B_{k-1} A_k}^{B_k} \frac{b^i_\alpha(p)}{\eps^{i+\ell}} \int_{\mathbb{G}} u(p \cdot \delta_\eps(w)) \delta_\eps\left(w^{I_k}_{A_k}\right) X^{i+\ell}_{\beta_k} \rho(w)\,dw\\
    & = \sum_{i=2}^s \sum_{\ell=1}^{i-1} \sum_{k=1}^\ell \frac{1}{k!} \sum_{\abs*{I_k}=\ell} \sum_{\mathcal A_k(I_k)} \sum_{\mathcal B_k(i-\ell,I_k)} \sum_{\alpha=1}^{n_{i-\ell}} c(i,I_k)_{B_{k-1} A_k}^{B_k} \frac{b^{i-\ell}_\alpha(p)}{\eps^i} \int_{\mathbb{G}} \delta_\eps\left(w^{I_k}_{A_k}\right) u(p \cdot \delta_\eps(w)) X^i_{\beta_k} \rho(w)\,dw\\
    & = \sum_{i=2}^s \sum_{\ell=1}^{i-1} \sum_{k=1}^\ell \frac{1}{k!} \sum_{\abs*{I_k}=\ell} \sum_{\mathcal A_k(I_k)} \sum_{\beta_k=1}^{n_i} \sum_{\mathcal B_{k-1}(i-\ell,I_{k-1})} \sum_{\alpha=1}^{n_{i-\ell}} c(i,I_k)_{B_{k-1} A_k}^{B_k} \frac{b^{i-\ell}_\alpha(p)}{\eps^i} \int_{\mathbb{G}} \delta_\eps\left(w^{I_k}_{A_k}\right) u(p \cdot \delta_\eps(w)) X^i_{\beta_k} \rho(w) \,dw.
\end{split}
\end{equation*}
Instead, the second one is
\begin{equation*}
\begin{split}
    \sum_{i=2}^s & \sum_{\alpha=1}^{n_i} \sum_{\ell=1}^{i-1} \int_{\mathbb{G}} T^\ell_p b^i_\alpha(\delta_\eps(w)) \eps^{-i} u(p \cdot \delta_\eps(w)) X^i_\alpha \rho(w)\,dw\\
    \overset{\eqref{eq_higher_order_derivatives}}&{=} \sum_{i=2}^s \sum_{\alpha=1}^{n_i} \sum_{\ell=1}^{i-1} \sum_{k=1}^\ell \frac{1}{k!} \sum_{\abs*{I_k}=\ell} \sum_{\mathcal A_k(I_k)} \int_{\mathbb{G}} \delta_\eps\left(w^{I_k}_{A_k}\right) X^{I_k}_{A_k} b^i_\alpha(p)\eps^{-i} u(p \cdot \delta_\eps(w)) X^i_\alpha \rho(w) \,dw\\
    & = \sum_{i=2}^s \sum_{\ell=1}^{i-1} \sum_{k=1}^\ell \frac{1}{k!} \sum_{\abs*{I_k}=\ell} \sum_{\mathcal A_k(I_k)} \sum_{\beta_k=1}^{n_i} X^{I_k}_{A_k} b^i_{\beta_k}(p) \int_{\mathbb{G}} \delta_\eps\left(w^{I_k}_{A_k}\right) u(p \cdot \delta_\eps(w)) \eps^{-i} X^i_{\beta_k} \rho(w) \,dw.
\end{split}
\end{equation*}
Therefore,
\begin{equation*}
\begin{split}
    & \sum_{i=1}^s \sum_{\alpha=1}^{n_i} \left[D^i_\alpha(p)+G^i_\alpha(p)\right]\\
    & = \sum_{i=2}^s \sum_{\ell=1}^{i-1} \sum_{k=1}^\ell \frac{1}{k!} \sum_{\abs*{I_k}=\ell} \sum_{\mathcal A_k(I_k)} \sum_{\beta_k=1}^{n_i} \\
    & \quad \left[X^{I_k}_{A_k} b^i_{\beta_k}(p) - \sum_{\mathcal B_{k-1}(i-\ell,I_{k-1})} \sum_{\alpha=1}^{n_{i-\ell}} c(i,I_k)_{B_{k-1} A_k}^{B_k} b^{i-\ell}_\alpha(p)\right] \int_{\mathbb{G}} \delta_\eps\left(w^{I_k}_{A_k}\right) u(p \cdot \delta_\eps(w)) \eps^{-i} X^i_{\beta_k} \rho(w) \,dw\\
    \overset{\eqref{eq_iterated_contact_components}}&{=} 0.
\end{split}
\end{equation*}
In conclusion,
\begin{equation*}
\lim_{\eps \searrow 0}^{L^1_{\mathrm{loc}}}  -\mathscr C_\eps^1 = \lim_{\varepsilon \searrow 0}^{L^1_{\mathrm{loc}}} (u\divv \bb)*\rho_\varepsilon + \lim_{\varepsilon \searrow 0}^{L^1_{\mathrm{loc}}} \left(\sum_{i=1}^s \sum_{\alpha=1}^{n_i} F^i_\alpha \right) + \lim_{\varepsilon \searrow 0}^{L^1_{\mathrm{loc}}} \Biggl(\underbrace{\sum_{i=1}^s \sum_{\alpha=1}^{n_i} \left[D^i_\alpha + G^i_\alpha \right]}_{=0}\Biggr)= u\divv \bb-u\divv \bb=0,
\end{equation*}
where the second equality holds due to \Cref{prop_group_mollification}, since $u\divv \bb \in L^1_{\mathrm{loc}}(\mathbb{G})$.
\end{proof}
\begin{proof}[Proof of \Cref{teo_distributional_implies_renormalized}]
Owing to \Cref{lem_commutator}, the proof is just an adaptation of that of \cite[Theorem 4.6]{dpl1}.
\end{proof}

\subsection{Well-posedness results}\label{section_wpr}
As a consequence of \Cref{teo_distributional_implies_renormalized} and \Cref{prop_existence}, we get existence and uniqueness of distributional solutions to \eqref{eq_transport_equation} under natural growth conditions. We refer to \cite[Theorem 4.7]{dpl1} for its proof in the Heisenberg groups case and \cite[Theorem II.2]{MR1022305} in the Euclidean one.
\begin{theorem}\label{exun_pde_theorem}
Let $\theta \in [1,\infty]$ and $u_0 \in L^\theta(\mathbb{G})$. Let $\bb \in L^1\left(0,\bar{\tau};\left[W^{1,\theta'}_{\hhh,\mathrm{loc}}(\mathbb{G})\right]^n\right)$ be a time-dependent contact vector field. Assume, in addition, that $c, \divv \bb \in L^1\left(0,\bar{\tau};L^\infty(\mathbb{G})\right)$ and that
\begin{equation} \label{eq_growth_condition}
\frac{|\bb|}{1+d(p,0)} \in L^1\left(0,\bar{\tau};L^1(\mathbb{G})\right)+L^1(0,\bar{\tau};L^\infty(\mathbb{G})),
\end{equation} 
where $|\bb|$ is the norm of $\bb$ with respect to the Riemannian metric $\langle\cdot,\cdot\rangle$. Then there exists a unique distributional solution $u$ to \eqref{eq_transport_equation} in $L^\infty\left(0,\bar{\tau};L^\theta(\mathbb{G})\right)$ corresponding to the initial condition $u_0$.
\end{theorem}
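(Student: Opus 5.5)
Existence comes directly from \Cref{prop_existence}. Since $c,\divv\bb\in L^1(0,\bar\tau;L^\infty(\mathbb G))$, both cases of its hypothesis hold: for $\theta\in(1,\infty]$, $c+\frac1\theta\divv\bb\in L^1(0,\bar\tau;L^\infty(\mathbb G))$, and for $\theta=1$ the two terms are bounded separately. Moreover, $\bb\in L^1(0,\bar\tau;[W^{1,\theta'}_{\hhh,\mathrm{loc}}]^n)$ gives \eqref{b,c_assumptions}. Hence there is a distributional solution in $L^\infty(0,\bar\tau;L^\theta(\mathbb G))$. The substance of the statement is therefore uniqueness. By linearity of \eqref{eq_transport_equation}, it suffices to show that a distributional solution $u\in L^\infty(0,\bar\tau;L^\theta(\mathbb G))$ with $u_0=0$ vanishes.

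The plan follows \cite[Theorem II.2]{MR1022305} and \cite[Theorem 4.7]{dpl1}. By \Cref{teo_distributional_implies_renormalized}, $u$ is renormalized. I would choose a bounded $C^1$ renormalization with bounded derivative, for instance $\beta_M(t)=\min(|t|,M)^{\theta}$ smoothed near $0$ and $M$ (or $\beta(t)=\arctan|t|$-type functions when $\theta=1$). Then $v=\beta(u)\ge0$ is a distributional solution of $\partial_\tau v-\langle\bb,\nabla v\rangle+cu\beta'(u)=0$ with $v(0)=0$, and $|cu\beta'(u)|\le C\|c\|_\infty\, v$ for the power-type choices. Since $\beta$ is truncated, $v\in L^\infty\cap L^1_{\mathrm{loc}}$, and I will need $v$ globally integrable; this follows from $u\in L^\theta$ because $\beta(u)\lesssim|u|^\theta$.

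Next I test the renormalized equation against $\varphi(\tau,p)=\chi(\tau)\psi_R(p)$. Here $\psi_R(p)=\psi(d(p,0)/R)$ is a cutoff, made Lipschitz in the horizontal directions with smooth regularization (for instance using a smooth homogeneous norm equivalent to $d$). Then $|\nabla\psi_R|\lesssim 1/R$ on $\{R\le d(p,0)\le 2R\}$. To control all Riemannian components I would actually use the homogeneous norm $\psi(\|p\|/R)$, whose $X^i_\alpha$-derivatives are $O(R^{-i})$ on the annulus. This gives the inequality
\begin{equation*}
\frac{d}{d\tau}\int v\psi_R\,dp\le \int v\,|\langle\bb,\nabla\psi_R\rangle|\,dp+\left(\|\divv\bb(\tau)\|_\infty+C\|c(\tau)\|_\infty\right)\int v\psi_R\,dp .
\end{equation*}

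The main obstacle is the boundary term $\int v|\langle\bb,\nabla\psi_R\rangle|$, and here the growth condition \eqref{eq_growth_condition} enters. Write $|\bb|/(1+d(p,0))=b_1+b_2$ with $b_1\in L^1(L^1)$ and $b_2\in L^1(L^\infty)$. On the annulus, $|\nabla\psi_R|\lesssim (1+d(p,0))^{-1}$ for $R\ge1$ (higher layers give even smaller factors). So the term is bounded by $C\int_{R\le d\le 2R}v(b_1+b_2)$. The $b_1$ part is at most $\|v\|_\infty\int_{d\ge R}b_1\to0$. The $b_2$ part is at most $\|b_2(\tau)\|_\infty\int_{d\ge R}v\to0$, using $v\in L^\infty(0,\bar\tau;L^1)$. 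Both bounds are integrable in time, so dominated convergence applies. Letting $R\to\infty$ and applying Gronwall with $v(0)=0$ gives $\int v(\tau)=0$ for a.e.~$\tau$, hence $u=0$.

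Finally, for $\theta=\infty$ I would use $\beta(t)=\min(t^2,1)$ (suitably smoothed), which lies in $L^\infty$ but not in $L^1$. In that case I would replace the cutoff argument by a weighted one with weight $(1+\|p\|)^{-N}$ times $\psi_R$, as in \cite{MR1022305}. Alternatively, I would reduce to the $L^1\cap L^\infty$ case by duality, exactly as in \cite[Theorem 4.7]{dpl1}.
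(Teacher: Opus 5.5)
Your overall route is the one the paper intends (existence from \Cref{prop_existence}; uniqueness from \Cref{teo_distributional_implies_renormalized} plus the DiPerna--Lions cutoff and Gronwall argument). For $\theta<\infty$ your proof is complete. The truncated power $\beta$ gives $v\in L^\infty\cap L^\infty(0,\bar\tau;L^1(\mathbb G))$ with $|u\beta'(u)|\le C\beta(u)$. The splitting $|\bb|/(1+d(p,0))=b_1+b_2$ then disposes of the annulus term exactly as you describe.

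The case $\theta=\infty$, however, is not actually proved. With the fixed weight $\phi=(1+\|p\|)^{-N}$ (integrable only if $N>Q$), $\nabla\phi$ is no longer supported on an annulus. You only get $|\langle\bb,\nabla\phi\rangle|\lesssim\phi\,(|b_1|+|b_2|)$ on all of $\mathbb G$. The $b_2$ part is of Gronwall type. The $b_1$ part, however, only yields the forcing term $\int v\,\phi\,|b_1|\le\|v\|_\infty\|b_1(\tau)\|_{L^1}$, which is neither small nor proportional to $\int v\phi$. Gronwall then gives $\int v(\tau)\phi\lesssim\|v\|_\infty\|b_1\|_{L^1(0,\bar\tau;L^1)}$, not $0$.

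The missing idea is to make the $L^1$ part of the growth condition act only near infinity. One way is to use dilated (smooth) weights $\phi_\lambda=\phi\circ\delta_\lambda$ with $\lambda\in(0,1]$. Since $X^i_\alpha\phi_\lambda=\lambda^i(X^i_\alpha\phi)\circ\delta_\lambda$, you get $|\bb|\,|\nabla\phi_\lambda|\lesssim\frac{\lambda(1+\|p\|)}{1+\lambda\|p\|}\,\phi_\lambda\,(|b_1|+|b_2|)$, where the factor is $\le 1$ and tends to $0$ pointwise as $\lambda\searrow0$. Then:
\begin{itemize}
\item the $b_2$ contribution remains $\lesssim\|b_2(\tau)\|_\infty\int v\phi_\lambda$;
\item the $b_1$ contribution is $\le\|v\|_\infty\int\frac{\lambda(1+\|p\|)}{1+\lambda\|p\|}|b_1|$, which tends to $0$ by dominated convergence.
\end{itemize}
Test with $\phi_\lambda\psi_R$, let $R\to\infty$ (using $\phi_\lambda\in L^1$), and then let $\lambda\searrow0$; this gives $v=0$. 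Here $\beta(t)=t^2$ suffices, since no bound on $\beta'$ is needed when $\theta=\infty$. A weight equal to $1$ on $B(0,R)$ and decaying like $d(p,0)^{-N}$ outside works equally well.

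Your alternative, ``by duality as in \cite{dpl1}'', is not an argument as stated. A duality proof requires constructing solutions of the adjoint problem regular enough to be used as test functions, and you do not indicate how to do this.
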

A slight adaptation of the abstract framework introduced in \cite{MR2096794} -- and subsequently formalized in \cite{MR2409676,MR3283066} -- allows us to leverage Eulerian well-posedness to well-posedness at Lagrangian level.
Accordingly, we recall that a map $\Phi(\tau,p)$ is a \emph{regular Lagrangian flow} associated with $\bb \in L^1(0,\bar{\tau};\mathbb{X}(\mathbb{G}))$ if
\begin{itemize}
\item[(a)] for a.e.~$p \in \mathbb{G}$, $\tau\mapsto\Phi(\tau,p)$ is an absolutely continuous integral curve of $\bb$, i.e.,
\begin{equation*}
\begin{cases}
\dfrac{d}{d\tau}\Phi(\tau,p)=\bb(\tau,\Phi(\tau,p)) & \text{for a.e.~$\tau \in (0,\bar{\tau})$}\\
\Phi(0,p)=p,
\end{cases}
\end{equation*}
\item[(b)] there exists a constant $C\in (0,\infty)$ such that $\Phi(\tau,\cdot)_\#\mathcal{L}^n\leq C\mathcal{L}^n$ for all $\tau \in [0,\bar\tau)$.
\end{itemize}
In the above definition,  $\mathcal{L}^n$ is the Lebesgue measure and $\Phi(\tau,\cdot)_\#\mathcal{L}^n$ the push-forward measure.
\begin{theorem}
Assume that $\bb \in L^1\left(0,\bar{\tau};\left[W^{1,1}_{\hhh,\mathrm{loc}}(\mathbb{G})\right]^n\right)$ is a time-dependent contact vector field. Assume that $\divv \bb \in L^1\left(0,\bar{\tau};L^\infty(\mathbb{G})\right)$ and that \eqref{eq_growth_condition} holds. Then there exists a regular Lagrangian flow associated with $\bb$. Moreover, if $\Phi_1$ and $\Phi_2$ are regular Lagrangian flows associated with $\bb$, then
\begin{equation*}
    \Phi_1(\cdot,p)=\Phi_2(\cdot,p)\qquad\text{for a.e.~$p\in \mathbb G$.}
\end{equation*}
\end{theorem}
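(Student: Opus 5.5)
The plan is to apply the abstract Eulerian-to-Lagrangian machinery of \cite{MR2409676,MR3283066}, in the spirit of \cite{MR2096794}: a regular Lagrangian flow exists and is unique once the continuity equation with velocity $\bb$ is well-posed in $L^\infty\left(0,\bar\tau;L^1\cap L^\infty(\mathbb G)\right)$, given the growth condition \eqref{eq_growth_condition} and $\divv\bb\in L^1(0,\bar\tau;L^\infty)$. We work in exponential coordinates, where $\mathbb G=\rr^n$, the Lebesgue measure is the Haar measure, and $\bb$ is an ordinary Euclidean time-dependent vector field.

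\textbf{Step 1: translate the hypotheses.} The Riemannian norm $|\bb|$ and the Euclidean norm of $\bb$ in coordinates are comparable only up to polynomial factors, because the frame $X^i_\alpha$ has polynomial coefficients. The abstract theory, however, only needs the growth condition in a form that prevents mass from escaping to infinity in finite time. I would therefore redo the standard cutoff argument directly in the intrinsic geometry. Take $\varphi_R(p)=\chi(d(p,0)/R)$, smoothed with a homogeneous norm equivalent to $d$. Its gradient satisfies $|\langle\bb,\nabla\varphi_R\rangle|\lesssim |\bb|/R$ on the annulus $\{R\le d\le 2R\}$, and there $|\bb|/R\lesssim |\bb|/(1+d)$. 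So \eqref{eq_growth_condition} is exactly what is needed.

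\textbf{Step 2: comparison principle for the continuity equation.} The continuity equation $\partial_\tau\mu+\divv(\bb\mu)=0$ is \eqref{eq_transport_equation} with $c=\divv\bb$ and the sign of $\bb$ reversed; reversing the sign preserves the contact property. Take nonnegative solutions in $L^\infty(L^1\cap L^\infty)$. \Cref{teo_distributional_implies_renormalized} with $\theta=\infty$ uses $\theta'=1$, which matches the hypothesis $\bb\in W^{1,1}_{\hhh,\mathrm{loc}}$. It shows that every bounded distributional solution is renormalized. Combining renormalization with Step 1 and Gronwall's inequality (using $\divv\bb\in L^1(L^\infty)$) yields uniqueness, as in \Cref{exun_pde_theorem}. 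Existence comes from \Cref{prop_existence}. The comparison principle for differences of solutions also holds, since differences of bounded solutions are again bounded solutions and renormalization applies to them.

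\textbf{Step 3: existence of the flow.} Mollify $\bb$ via group convolution to obtain smooth $\bb_h$ with classical flows $\Phi_h$. Their compressibility constants are controlled by $\exp\int\|\divv\bb_h\|_\infty$, uniformly in $h$. One point needs attention: group mollification does not preserve the contact property, but that property is not needed for the approximating fields, only for the limit. Using the growth bound, the laws of the curves are tight, so the generalized flows $\eta_h$ on path space are tight. Any limit $\eta$ is concentrated on integral curves of $\bb$, via the standard lemma in \cite{MR2409676} that $L^1$-convergence of $\bb_h$ passes to curves thanks to the uniform compressibility. The marginals of $\eta$ solve the continuity equation. By the uniqueness in Step 2 and the superposition principle, $\eta$ is induced by a map, which gives a regular Lagrangian flow.

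\textbf{Step 4: uniqueness of the flow.} Given two regular Lagrangian flows, average them to form $\tfrac12(\delta_{\Phi_1(\cdot,p)}+\delta_{\Phi_2(\cdot,p)})\,dp$. Localizing to sets $E$ gives bounded solutions of the continuity equation. Uniqueness from Step 2 forces $\Phi_1=\Phi_2$ a.e., following Ambrosio's argument in \cite{MR2409676}. This step is standard.

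The main obstacle is Step 1, together with the tightness in Step 3: making sure the intrinsic growth assumption \eqref{eq_growth_condition} suffices for the non-concentration estimates. I would handle this by running all cutoff and tightness estimates with the homogeneous distance $d$ and never passing through Euclidean norms.
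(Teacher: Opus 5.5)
Your strategy is the one the paper intends. Its only justification for this theorem is that a slight adaptation of the Ambrosio--Crippa framework upgrades the Eulerian theory to existence and uniqueness of the regular Lagrangian flow. That Eulerian theory is renormalization from \Cref{teo_distributional_implies_renormalized} with $\theta=\infty$, $\theta'=1$, existence from \Cref{prop_existence}, and uniqueness as in \Cref{exun_pde_theorem}. Your Steps 1, 2 and 4 expand this faithfully. Your cutoff computation is also correct: $X^i_\alpha$ applied to a smooth homogeneous norm is homogeneous of degree $1-i$, so $|\nabla\varphi_R|\lesssim R^{-1}$ on the annulus for $R\geq 1$.

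The gap is in Step 3: the claim that the compressibility constants of $\Phi_h$ are uniform in $h$ does not hold as written. Your remark that contact is lost suggests you mollify the left-frame components with the paper's convolution, $b^i_\alpha*\rho_h$. In that case $X^i_\alpha(b^i_\alpha*\rho_h)=b^i_\alpha*X^i_\alpha\rho_h$, whereas $(X^i_\alpha b^i_\alpha)*\rho_h=b^i_\alpha*\left(X^i_\alpha\right)^r\rho_h$. By \eqref{eq_right_vector_fields} the two differ by terms with polynomial weights and higher-layer derivatives of $\rho_h$. These terms have the same origin as the $D^i_\alpha$ in the proof of \Cref{lem_commutator}, and nothing bounds them by $\|\divv\bb\|_{L^\infty}$. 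Hence $\divv\bb_h\neq(\divv\bb)*\rho_h$, and $\sup_h\int_0^{\bar\tau}\|\divv\bb_h\|_{L^\infty}\,d\tau<\infty$ does not follow from the hypotheses. Without it you cannot pass $\bb_h\to\bb$ along the curves charged by $\eta_h$.

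There are two ways to repair this.
\begin{itemize}
\item Mollify on the other side: set $\bb_h\coloneqq\int_{\mathbb G}\rho_h(q)\,(L_q)_*\bb\,dq$, whose components are $\rho_h*b^i_\alpha$. Left translations preserve the Lebesgue measure, the left-invariant metric and $\hhh$. Hence $\divv\bb_h=\rho_h*\divv\bb$ and $|\bb_h|\le\rho_h*|\bb|$. Combined with $1+d(q^{-1}p,0)\le(1+h)(1+d(p,0))$ for $d(q,0)\le h$, this shows \eqref{eq_growth_condition} passes to $\bb_h$ uniformly, and $\bb_h$ is even contact.
\item Bypass approximating flows, as Ambrosio--Crippa do. Lift the bounded solution from \Cref{prop_existence} with a superposition principle, and deduce determinism from the comparison principle of Step 2. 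The superposition principle must then be stated with the intrinsic weight $|\bb|/(1+d)$. The reason is that \eqref{eq_growth_condition} does not control the Euclidean weight $|\bb|_{\mathrm{eu}}/(1+|x|)$. Already for $\bb=X^1_1$ in the Engel group that weight is unbounded, because the $\partial_{x^3_1}$-coefficient is quadratic.
\end{itemize}
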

\begin{remark}
    Stability properties, both at Eulerian and at Lagrangian level, are a direct consequence of uniqueness as in \Cref{exun_pde_theorem}, and follows with minor modifications as in \cite[Section $\mathrm{II}$]{MR1022305} and \cite[Section 5]{MR2409676}.
\end{remark}
We refer to \cite[Section 5]{dpl1} for a summary of further consequences in the case of Heisenberg groups, and for several additional references. With small adjustments, these results carry over to the Carnot group setting.

\appendix
\section{Further properties of difference quotients}\label{sec_appendix}
In this final appendix, we collect some additional properties of difference quotients. Although they are not needed for the purposes of this paper, we believe they may be of independent interest. The first result provides an inductive characterization of difference quotients.
\begin{proposition}
Let $\Om \subseteq \mathbb{G}$ be open. Fix $p \in \Om$ and $w \in \mathbb{G}$. Let $\eps>0$ be such that $p \cdot \delta_{\tau\eps}(w) \in \Om$ for every $\tau \in [0,1]$. If $w=\exp(W)=\exp(W_1+\dots+W_s)$ with $W_i \in V_i$ for every $i=1,\dots,s$, set 
\begin{equation*}
D^w_m=\sum_{k=0}^{s-1} \frac{(-1)^k}{(k+1)!} \sum_{\abs*{I_{k+1}}=m} i_1 [W_{i_{k+1}},\dots,[W_{i_2},W_{i_1}]\dots].
\end{equation*}
 Let $\ell \geq 1$ and $f \in C^\ell(\Om)$. Then
\begin{equation*}
R^w_{\ell,\eps} f(p)=\sum_{m=1}^\ell \int_0^1 \tau^{\ell-1} R^w_{\ell-m,\tau \eps}(D^w_m f)(p) \,d\tau + \sum_{m=\ell+1}^s \eps^{m-\ell} \int_0^1 \tau^{m-1} D^w_m f(p \cdot \delta_{\tau\eps}(w)) \,d\tau,
\end{equation*}
where we agree that the second term does not appear when $\ell\geq s$.

\end{proposition}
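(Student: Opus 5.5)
The idea is to differentiate along the dilated curve $\tau\mapsto p\cdot\delta_{\tau\eps}(w)$ and integrate. This gives a first-order integral representation of $f(p\cdot\delta_\eps(w))-f(p)$. Applying the same representation to the Taylor polynomial, and subtracting, produces the claimed formula.

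\emph{Step 1: derivative along dilations.} Set $Z(\tau)\coloneqq\delta_\tau W=\sum_{i}\tau^iW_i$, so that $Z'(\tau)=\sum_i i\tau^{i-1}W_i$. I would use the standard formula for the differential of the exponential map,
\begin{equation*}
\frac{d}{d\tau}\exp(Z(\tau))=(dL_{\exp(Z(\tau))})_e\Bigl(\sum_{k\ge0}\tfrac{(-1)^k}{(k+1)!}(\mathrm{ad}_{Z(\tau)})^kZ'(\tau)\Bigr).
\end{equation*}
The series is finite, since $(\mathrm{ad}_{Z})^k=0$ for $k\ge s$. Expanding $(\mathrm{ad}_Z)^kZ'$ multilinearly, each bracket $[W_{i_{k+1}},\dots,[W_{i_2},W_{i_1}]\dots]$ comes with the coefficient $i_1\tau^{\abs*{I_{k+1}}-1}$. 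Grouping by $m=\abs*{I_{k+1}}$ therefore yields $\sum_{m=1}^s\tau^{m-1}D^w_m$. Hence, for any $g\in C^1(\Om)$, using left-invariance,
\begin{equation*}
\frac{d}{d\tau}g(p\cdot\delta_{\tau\eps}(w))=\sum_{m=1}^s\eps^m\tau^{m-1}(D^w_mg)(p\cdot\delta_{\tau\eps}(w)).
\end{equation*}
Here $D^w_m\in V_m$ is a fixed left-invariant vector field, and the factor $\eps^m$ comes from the chain rule in $\tau\eps$. Integrating over $[0,1]$ gives
\begin{equation*}
f(p\cdot\delta_\eps(w))-f(p)=\sum_{m=1}^s\eps^m\int_0^1\tau^{m-1}D^w_mf(p\cdot\delta_{\tau\eps}(w))\,d\tau .
\end{equation*}

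\emph{Step 2: the same identity for the Taylor polynomial.} Let $h(q)\coloneqq P^\ell_pf(p^{-1}\cdot q)$, which is smooth near $p$, and apply Step 1 to $h$:
\begin{equation*}
P^\ell_pf(\delta_\eps(w))-f(p)=\sum_m\eps^m\int_0^1\tau^{m-1}(D^w_mh)(p\cdot\delta_{\tau\eps}(w))\,d\tau .
\end{equation*}
Since left-invariant vector fields commute with left translations, the function $q\mapsto(D^w_mh)(p\cdot q)=D^w_m(P^\ell_pf)(q)$ is a polynomial.
\begin{itemize}
\item[(a)] Its homogeneous degree is at most $\ell-m$, because $D^w_m\in V_m$ lowers homogeneous degree by $m$. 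In particular it vanishes identically when $m>\ell$.
\item[(b)] For every $X^{I_k}_{A_k}$ with $\abs*{I_k}\le\ell-m$, we have $X^{I_k}_{A_k}D^w_m$ of order $\abs*{I_k}+m\le\ell$. So \eqref{eq_equivalence_Taylor} (expanding $D^w_m$ in the basis of $V_m$) gives $X^{I_k}_{A_k}(D^w_mP^\ell_pf)(0)=X^{I_k}_{A_k}D^w_mf(p)$.
\end{itemize}
By the uniqueness characterization of Taylor polynomials following \eqref{eq_equivalence_Taylor}, (a) and (b) together give $(D^w_mh)(p\cdot q)=P^{\ell-m}_p(D^w_mf)(q)$ for $m\le\ell$, and $0$ for $m>\ell$.

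\emph{Step 3: subtraction.} Subtract the identity of Step 2 from that of Step 1 and divide by $\eps^\ell$. For $m\le\ell$, the integrand becomes
\begin{equation*}
\eps^{m-\ell}\tau^{m-1}\bigl[D^w_mf(p\cdot\delta_{\tau\eps}(w))-P^{\ell-m}_p(D^w_mf)(\delta_{\tau\eps}(w))\bigr]=\tau^{\ell-1}R^w_{\ell-m,\tau\eps}(D^w_mf)(p),
\end{equation*}
since $(\tau\eps)^{\ell-m}=\tau^{\ell-m}\eps^{\ell-m}$. For $m>\ell$, only the term from Step 1 survives, with factor $\eps^{m-\ell}$. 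This is exactly the claimed formula.

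\emph{Main obstacle.} The delicate point is Step 2, namely identifying $D^w_m(P^\ell_pf)$ with $P^{\ell-m}_p(D^w_mf)$ under only $C^\ell$ regularity. One must check that $D^w_mf$ has enough (horizontal) derivatives for $P^{\ell-m}_p(D^w_mf)$ to make sense. One must also justify the degree count carefully using homogeneity of the graded coordinates. If needed, I would first prove the formula for smooth $f$ and then pass to $C^\ell$ by approximation, since both sides involve only derivatives of $f$ of homogeneous order at most $\ell$.
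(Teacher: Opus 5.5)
Your proof is correct, but it handles the polynomial part differently from the paper.

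\textbf{Where the two proofs agree.} Your Step~1 is the paper's derivation of \eqref{eq_implicit_derivative} via \eqref{eq_differential_exponential}.

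\textbf{The paper's route.} It integrates this identity only for $f$. It then rewrites each $D^w_mf(p\cdot\delta_{\tau\eps}(w))$ as $(\tau\eps)^{\ell-m}R^w_{\ell-m,\tau\eps}(D^w_mf)(p)+P^{\ell-m}_p(D^w_mf)(\delta_{\tau\eps}(w))$. This leaves a purely polynomial remainder $G$. The paper kills $G$ by proving the identity
\begin{equation*}
T^k_pf(w)=\frac1k\sum_{m=1}^kT^{k-m}_p(D^w_mf)(w).
\end{equation*}
It obtains this by applying the general Leibniz rule to $\frac{d^{k-1}}{d\tau^{k-1}}$ of $\sum_m\tau^{m-1}D^w_mf(p\cdot\delta_\tau(w))$ at $\tau=0$ and reading off coefficients with \eqref{eq_Taylor_coefficients}.

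\textbf{Your route.} You apply the first-order identity to the translated Taylor polynomial. You then prove the structural commutation $D^w_m(P^\ell_pf)=P^{\ell-m}_p(D^w_mf)$, which is $0$ for $m>\ell$, by degree counting and the uniqueness characterization following \eqref{eq_equivalence_Taylor}.

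\textbf{Relation between them.} The two arguments prove the same fact. If you decompose your Step~2 identity into homogeneous components and use $\int_0^1\tau^{k-1}\,d\tau=\frac1k$, you recover exactly the paper's identity for $T^k_pf(w)$.

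\textbf{What each buys.} Your route is more conceptual. It explains the cancellation as the fact that homogeneous left-invariant derivatives commute with taking Taylor polynomials, lowering the degree by $m$. It also avoids the Leibniz combinatorics. However, it rests on facts the paper only quotes from \cite{BonLanUgu}: the uniqueness statement, and \eqref{eq_equivalence_Taylor} for mixed-layer strings $X^{I_k}_{A_k}X^m_\beta$. It also rests on the degree-lowering property of $V_m$-fields on polynomials, which you should justify. It follows from $X(g\circ\delta_\lambda)=\lambda^m(Xg)\circ\delta_\lambda$ for $X\in V_m$, since $\delta_\lambda$ is an automorphism with $d\delta_\lambda X=\lambda^mX$. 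The paper's route is more computational but self-contained once \eqref{eq_Taylor_coefficients} is available.

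\textbf{On regularity.} Your concern is milder than you suggest. The identity in Step~2 only involves derivatives of $f$ at $p$ of order at most $\ell$. You can therefore replace $f$ there by its Euclidean Taylor polynomial of order $\ell$ at $p$, in graded coordinates. This is smooth and has the same $\ell$-jet, so no approximation argument is needed.
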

\begin{proof}
We know from \cite[Theorem 2.14.3]{MR746308} that, for every $Z \in \mathfrak{g}$,
\begin{equation} \label{eq_differential_exponential}
(d\exp)_Z=\sum_{k=0}^{s-1} \frac{(-1)^k}{(k+1)!}(\mathrm{ad}_Z)^k.
\end{equation}
Setting $\gamma(\tau)=p \cdot \delta_\tau(w)=p \cdot \exp(\delta_\tau W)$ for every $\tau \in [0,\eps]$, we have
\begin{equation*}
\begin{split}
    \dot{\gamma}(\tau) & = (dL_p)_{\exp(\delta_\tau W)}(d\exp)_{\delta_\tau W}\left(\frac{d}{d\tau} \delta_\tau W\right)\\
    & = (dL_p)_{\exp(\delta_\tau W)}(d\exp)_{\delta_\tau W}\left(\sum_{i_1=1}^s i_1 \tau^{i_1-1} W_{i_1}(\delta_\tau W)\right)\\
    \overset{\eqref{eq_differential_exponential}}&{=} \sum_{k=0}^{s-1} \frac{(-1)^k}{(k+1)!} \sum_{i_1=1}^s i_1 \tau^{i_1-1} (dL_p)_{\exp(\delta_\tau W)} (\mathrm{ad}_{\delta_\tau W})^k(W_{i_1}(\delta_\tau W))\\
    & = \sum_{k=0}^{s-1} \frac{(-1)^k}{(k+1)!} \sum_{i_1=1}^s i_1 \tau^{i_1-1} \sum_{i_2,\dots,i_{k+1}=1}^s \tau^{i_2+\dots+i_{k+1}} [W_{i_{k+1}},\dots,[W_{i_2},W_{i_1}]\dots](\gamma(\tau))\\
    & = \sum_{k=0}^{s-1} \frac{(-1)^k}{(k+1)!} \sum_{\abs*{I_{k+1}} \leq s} i_1 \tau^{\abs*{I_{k+1}}-1} [W_{i_{k+1}},\dots,[W_{i_2},W_{i_1}]\dots](\gamma(\tau))\\
    & = \sum_{k=0}^{s-1} \frac{(-1)^k}{(k+1)!} \sum_{m=1}^s \sum_{\abs*{I_{k+1}}=m} i_1 \tau^{m-1} [W_{i_{k+1}},\dots,[W_{i_2},W_{i_1}]\dots](\gamma(\tau))\\
    & = \sum_{m=1}^s \tau^{m-1} D^w_m(\gamma(\tau)).
\end{split}
\end{equation*}
Therefore,
\begin{equation} \label{eq_implicit_derivative}
\frac{d}{d\tau}f(p \cdot \delta_\tau(w))=\sum_{m=1}^s \tau^{m-1} D^w_m f(p \cdot \delta_\tau(w))
\end{equation}
and so
\begin{equation*}
\begin{split}
    R^w_{\ell,\eps}f(p) \overset{\eqref{eq_difference_quotients_notation},\eqref{eq_Taylor_polynomial}}&{=} \frac{1}{\eps^\ell} \left[f(p \cdot \delta_\eps(w))-f(p)-\sum_{k=1}^\ell T^k_p f(\delta_\eps(w))\right]\\
    \overset{\eqref{eq_higher_order_derivatives}}&{=} \frac{1}{\eps^\ell} \left[\int_0^1 \frac{d}{d\tau} f(p \cdot \delta_{\tau\eps}(w)) \,d\tau -\sum_{k=1}^\ell \eps^k T^k_p f(w)\right]\\
    \overset{\eqref{eq_implicit_derivative}}&{=} \underbrace{\frac{1}{\eps^\ell} \left[\sum_{m=1}^\ell \int_0^1 \tau^{m-1} \eps^m D^w_m f(p \cdot \delta_{\tau\eps}(w)) \,d\tau -\sum_{k=1}^\ell \eps^k T^k_p f(w)\right]}_{\eqqcolon F}\\
    & \quad +\sum_{m=\ell+1}^s \eps^{m-\ell} \int_0^1 \tau^{m-1} D^w_m f(p \cdot \delta_{\tau\eps}(w)) \,d\tau.
\end{split}
\end{equation*}
The second term appears in the desired formula, so we focus on the first one. We have
\begin{equation*}
\begin{split}
     F & = \sum_{m=1}^\ell \int_0^1 \frac{1}{(\tau\eps)^{\ell-m}} \tau^{\ell-1} D^w_m f(p \cdot \delta_{\tau\eps}(w)) \,d\tau -\frac{1}{\eps^\ell} \sum_{k=1}^\ell \eps^k T^k_p f(w)\\
     \overset{\eqref{eq_difference_quotients_notation}}&{=} \sum_{m=1}^\ell \int_0^1 \tau^{\ell-1} R^w_{\ell-m,\tau\eps}(D^w_m f)(p) \,d\tau + \underbrace{\sum_{m=1}^\ell \frac{1}{\eps^{\ell-m}} \int_0^1 \tau^{\ell-1} P_{\ell-m}(D^w_m f)(p,\delta_{\tau\eps}(w)) \,d\tau - \frac{1}{\eps^\ell} \sum_{k=1}^\ell \eps^k T^k_p f(w)}_{\eqqcolon G}.
\end{split}
\end{equation*}
Hence, it suffices to prove that $G=0$. Using the convention $T^0_p (D^w_m f)(w)=(D^w_m f)(p)$, we get
\begin{equation*}
\begin{split}
    G \overset{\eqref{eq_Taylor_polynomial}}&{=} \frac{1}{\eps^\ell}\left[\sum_{m=1}^\ell \int_0^1 \tau^{m-1} \eps^m \sum_{h=0}^{\ell-m} (\tau\eps)^h T^h_p(D^w_m f)(w) \,d\tau - \sum_{k=1}^\ell \eps^k T^k_p f(w)\right]\\
    & = \frac{1}{\eps^\ell}\left[\sum_{m=1}^\ell \sum_{h=0}^{\ell-m} \frac{\eps^{m+h}}{m+h} T^h_p(D^w_m f)(w) - \sum_{k=1}^\ell \eps^k T^k_p f(w)\right]\\
    & = \frac{1}{\eps^\ell}\left[\sum_{m=1}^\ell \sum_{k=m}^\ell \frac{\eps^k}{k} T^{k-m}_p(D^w_m f)(w) - \sum_{k=1}^\ell \eps^k T^k_p f(w)\right]\\
    & = \frac{1}{\eps^\ell}\sum_{k=1}^\ell \eps^k \left[\frac{1}{k} \sum_{m=1}^k T^{k-m}_p(D^w_m f)(w)-T^k_p f(w)\right].
\end{split}
\end{equation*}
We claim that the term inside the parentheses is $0$. Indeed, by the general Leibniz rule,
\begin{equation*}
\begin{split}
    T^k_p f(w) \overset{\eqref{eq_higher_order_derivatives},\eqref{eq_Taylor_coefficients}}&{=} \frac{1}{k!} \frac{d^k}{d\tau^k}\bigg\rvert_{\tau=0} f(p \cdot \delta_\tau(w))\\
    \overset{\eqref{eq_implicit_derivative}}&{=} \frac{1}{k!} \sum_{m=1}^s \frac{d^{k-1}}{d\tau^{k-1}}\bigg\rvert_{\tau=0} \tau^{m-1} D^w_m f(p \cdot \delta_\tau(w))\\
    & = \frac{1}{k!} \sum_{m=1}^s \sum_{h=0}^{\min\{k-1,m-1\}} \binom{k-1}{h} \frac{d^h}{d\tau^h}\bigg\rvert_{\tau=0} \tau^{m-1} \frac{d^{k-h-1}}{d\tau^{k-h-1}}\bigg\rvert_{\tau=0} D^w_m f(p \cdot \delta_\tau(w))\\
    \overset{\eqref{eq_Taylor_coefficients}}&{=} \frac{1}{k!} \sum_{m=1}^s \sum_{h=0}^{\min\{k-1,m-1\}} \binom{k-1}{h} \frac{(m-1)!}{(m-1-h)!} \tau^{m-1-h}\rvert_{\tau=0} (k-h-1)! T^{k-h-1}_p(D^w_m f)(w)\\
    & = \frac{1}{k!} \sum_{m=1}^k \binom{k-1}{m-1}(m-1)!(k-m)! T^{k-m}_p(D^w_m f)(w)\\
    & = \frac{1}{k} \sum_{m=1}^k T^{k-m}_p(D^w_m f)(w).
\end{split}
\end{equation*}
This shows that $G=0$, so the proof is concluded.
\end{proof}
Second, the boundedness of difference quotients yields a quantitative mollification error estimate. For simplicity, we consider global horizontal Sobolev functions, for which \Cref{thm_difference_quotients} holds in the following form.
\begin{theorem} \label{thm_difference_quotients_global}
Let $\ell \geq 1$ and $\theta \in [1,\infty)$. Let $f \in W_\hhh^{\ell,\theta}(\mathbb G)$. Then
\begin{equation*}\label{eq:priori10_lavorocarnot_global}
\lim_{\eps \searrow 0}^{L^\theta(\mathbb G)}  R^w_{\ell,\eps} f(p) = 0\qquad\text{for every $w \in \mathbb G$.}
\end{equation*}
In addition, there exists $C_2=C_2(\mathbb G,\ell)>0$ such that, for every $\varepsilon>0$ and $w\in\mathbb G$,
\begin{equation}\label{eq:priori11_lavorocarnot_global}
   \left\| R^w_{\ell,\eps}f\right\|_{L^\theta(\mathbb G)}\leq C_2\, d(w,0)^\ell\sum_{\alpha_1,\ldots,\alpha_\ell=1}^{n_1} \left\|X^1_{\alpha_1}\ldots X^1_{\alpha_\ell}f \right\|_{L^\theta(\mathbb G)}.
\end{equation}
Finally, if $f\in W^{\ell,\infty}_\hhh(\mathbb G)$, one still has 
\begin{equation}\label{eq:priori11_infinito_lavorocarnot_global}
   \left\| R^w_{\ell,\eps}f\right\|_{L^\infty(\mathbb G)}\leq C_2\, d(w,0)^\ell\sum_{\alpha_1,\ldots,\alpha_\ell=1}^{n_1} \left\|X^1_{\alpha_1}\ldots X^1_{\alpha_\ell}f \right\|_{L^\infty(\mathbb G)}.
\end{equation}
\end{theorem}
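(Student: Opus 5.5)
The plan is to repeat the proof of \Cref{thm_difference_quotients} with $\Om=A=\mathbb G$. The condition $\mathrm{dist}(A,\partial\Om)>0$ was only used to keep the horizontal polygonal curve $p\cdot\gamma$ inside $\Om$. When $\Om=\mathbb G$ this constraint is vacuous, so \eqref{condepsw_lavorocarnot} holds for every $\eps>0$ and $w\in\mathbb G$.

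\textbf{Step 1: reduction to smooth functions.} For $\theta<\infty$, \Cref{meyersserrin} with $\Om=\mathbb G$ lets me assume $f\in C^\infty(\mathbb G)\cap W^{\ell,\theta}_\hhh(\mathbb G)$. This reduction is legitimate because $f\mapsto R^w_{\ell,\eps}f$ is linear. I also need $\|R^w_{\ell,\eps}f_h-R^w_{\ell,\eps}f\|_{L^\theta}\to0$ for each fixed $\eps$ and $w$. Each term $X^{I_k}_{A_k}f(p)$ in the polynomial $P^\ell_pf(\delta_\eps(w))$ with $|I_k|\le\ell$ can be written as a combination of horizontal derivatives $X^1_{\alpha_1}\cdots X^1_{\alpha_m}f$ of order $m\le\ell$, since $V_i=[V_1,V_{i-1}]$. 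The coefficients of $P^\ell_p f(\delta_\eps(w))$ are constants depending on $\eps$ and $w$. Hence $f\mapsto R^w_{\ell,\eps}f$ is bounded from $W^{\ell,\theta}_\hhh(\mathbb G)$ to $L^\theta(\mathbb G)$, using \eqref{eq_Lebesgue_Haar} for the term $f(p\cdot\delta_\eps(w))$. Consequently the estimate \eqref{eq:priori11_lavorocarnot_global}, once proved for smooth $f$, passes to the limit. The convergence statement then follows by a standard $\varepsilon/3$ argument, combining the uniform bound with convergence for smooth approximants.

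\textbf{Step 2: integral representation and the uniform bound.} Fix $p\in\mathbb G$ and set $F_p(q)=f(p\cdot q)-P^\ell_pf(q)$, which is now defined on all of $\mathbb G$. By \eqref{eq_equivalence_Taylor}, $X^1_{\alpha_1}\cdots X^1_{\alpha_j}F_p(0)=0$ for $j\le\ell$. Next I decompose $w=w_1\cdots w_N$ with $w_j\in\exp(V_1)$ via \cite[Lemma 1.40]{MR0657581}, build the same horizontal curve $\gamma$, and iterate the fundamental theorem of calculus $\ell$ times. This yields
\[
R^w_{\ell,\eps}f(p)=N^\ell\sum_{i_1,\ldots,i_\ell=1}^N\int\cdots\int\Big[(W_{i_\ell}\cdots W_{i_1}f)(p\cdot\gamma(\tau_\ell))-(W_{i_\ell}\cdots W_{i_1}f)(p)\Big]\,d\tau_\ell\cdots d\tau_1 .
\]
Minkowski's integral inequality, right-invariance of Lebesgue measure \eqref{eq_Lebesgue_Haar}, and the bounds \eqref{stimaeucondcc} and \eqref{boundsupezziorizzontali373737373} then give \eqref{eq:priori11_lavorocarnot_global} with the same constant $C_2$, now with no restriction on $\eps$.

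\textbf{Step 3: convergence.} For the limit I use \Cref{lem_continuity_translations} on all of $\mathbb G$. Since $d(\gamma(\tau),0)\le C_1\eps d(w,0)\to0$, the integrand tends to $0$ in $L^\theta(\mathbb G)$ for each $\tau$, and it is dominated by the bound of Step 2. Dominated convergence in $\tau$ then concludes.

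\textbf{Step 4: the case $\theta=\infty$.} Here density fails, so I would follow the last part of the proof of \Cref{thm_difference_quotients} verbatim with $\Om=\mathbb G$. First, continuity of the derivatives up to order $\ell-1$ follows from \cite{MR1631642}. I then stop the iteration at order $\ell-1$ as in \eqref{finoaunoprimaperr45678}, and use the Lipschitz estimate along horizontal segments as in \eqref{stimaconsubdiffimplicito}.

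The main obstacle is justifying the density step in Step 1, namely continuity of $R^w_{\ell,\eps}$ in the $W^{\ell,\theta}_\hhh$ norm. This requires expressing the non-horizontal derivatives in $P^\ell_p f$ through horizontal ones with global $L^\theta$ control. If that becomes delicate, I would instead apply \Cref{thm_difference_quotients} on balls $A=B(0,R)\Subset\Om=B(0,2R)$, where the constants are independent of $R$, and let $R\to\infty$ by monotone convergence.
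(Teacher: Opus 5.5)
Your proposal is correct and follows the paper's route: the paper obtains this global statement by running the proof of \Cref{thm_difference_quotients} with $\Om=A=\mathbb G$, where the constraint \eqref{condepsw_lavorocarnot} keeping $p\cdot\gamma$ inside $\Om$ becomes vacuous, so the same horizontal-polygonal representation, Minkowski inequality, right-invariance and continuity of translations give all three claims with the same $C_2$. You also check explicitly that $f\mapsto R^w_{\ell,\eps}f$ is bounded from $W^{\ell,\theta}_\hhh(\mathbb G)$ to $L^\theta(\mathbb G)$ for fixed $\eps,w$, by expanding each $X^{I_k}_{A_k}$ into horizontal derivatives of order $\abs*{I_k}\le\ell$; this makes rigorous the density reduction that the paper leaves implicit.
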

A simple consequence of \Cref{thm_difference_quotients_global} is the following quantitative version of \Cref{prop_group_mollification}.
\begin{theorem}
    Let $\ell \geq 1$ and $\theta \in [1,\infty]$. Let $f \in W_\hhh^{\ell,\theta}(\mathbb G)$. 
    Let $\rho \in C_c^\infty(B(0,1))$ be such that
\begin{equation}\label{eq_mollificatori_quant}
\int_{\mathbb{G}} \rho\,dp = 1,\qquad \text{and}\qquad\int_{\mathbb G}\rho\,P\,dp=0 
\end{equation}
for every non-constant polynomial $P$ of homogeneous degree less than or equal to $\ell-1$ and such that $P(0)=0$. Define $\rho_\eps$ as in \eqref{eq_def_mollifier}, and set $u_\eps\coloneqq u*\rho_\eps$. Then, there exists $C_3=C_3(\mathbb G,\ell)>0$ such that, for every $\varepsilon>0$,
\begin{equation*}
    \|u_\eps-u\|_{L^{\theta}(\mathbb G)}\leq C_3\,\eps^\ell \sum_{\alpha_1,\ldots,\alpha_\ell=1}^{n_1} \left\|X^1_{\alpha_1}\ldots X^1_{\alpha_\ell} u\right\|_{L^\theta(\mathbb G)}.
\end{equation*}
\end{theorem}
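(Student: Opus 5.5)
Write $u_\eps(p)-u(p)=\int_{\mathbb G}\left[u(p\cdot q^{-1})-u(p)\right]\rho_\eps(q)\,dq$ and note that the statement writes $u$ for $f$. The idea is to rewrite this as an integral of the difference quotient $R^w_{\ell,\eps}u$ against $\rho$, after killing the Taylor polynomial with the moment conditions \eqref{eq_mollificatori_quant}. Then we conclude with \Cref{thm_difference_quotients_global}.

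\textbf{Change of variables.} Put $q=\delta_\eps(w)^{-1}=\delta_\eps(w^{-1})$. By \eqref{eq_dilation_Lebesgue} and the inversion invariance of Lebesgue measure ($p^{-1}=-p$ in graded coordinates), this gives
\begin{equation*}
u_\eps(p)-u(p)=\int_{\mathbb G}\left[u(p\cdot\delta_\eps(w))-u(p)\right]\rho(w^{-1})\,dw .
\end{equation*}
Set $\tilde\rho(w)\coloneqq\rho(w^{-1})$. Its moments satisfy the same conditions \eqref{eq_mollificatori_quant}, because $P\circ\iota$ is again a polynomial of the same homogeneous degree vanishing at $0$ (in graded coordinates $\iota$ is $x\mapsto-x$). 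Now add and subtract the Taylor polynomial, $u(p\cdot\delta_\eps(w))-u(p)=\eps^{\ell-1}R^w_{\ell-1,\eps}u(p)+\sum_{k=1}^{\ell-1}T^k_pu(\delta_\eps(w))$. The map $w\mapsto T^k_pu(\delta_\eps(w))=\eps^kT^k_pu(w)$ is a polynomial of homogeneous degree $k\le\ell-1$ vanishing at $0$, so its integral against $\tilde\rho$ is zero.

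\textbf{Reaching order $\ell$.} The remaining term is only of order $\eps^{\ell-1}$, so we go one order further and use $R^w_{\ell,\eps}$:
\begin{equation*}
u(p\cdot\delta_\eps(w))-u(p)=\eps^\ell R^w_{\ell,\eps}u(p)+\sum_{k=1}^{\ell}\eps^kT^k_pu(w).
\end{equation*}
The terms with $k\le\ell-1$ integrate to zero as above. The top term $\eps^\ell T^\ell_pu(w)$ is homogeneous of degree exactly $\ell$, so it is \emph{not} covered by the moment hypothesis as literally stated. This is the main obstacle, and I would handle it in two steps.
\begin{itemize}
\item First, I would check whether the intended hypothesis covers degree $\ell$, or whether the estimate is really meant as $\|u_\eps-u\|\lesssim\eps^\ell$ with the $\ell$-th order term absorbed. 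By homogeneity $|T^\ell_pu(w)|\le C\,d(w,0)^\ell\sum_{|I_k|=\ell}|X^{I_k}_{A_k}u(p)|$. The derivatives $X^{I_k}_{A_k}$ with $|I_k|=\ell$ are combinations of $\ell$-fold horizontal derivatives $X^1_{\alpha_1}\cdots X^1_{\alpha_\ell}u$, since each $X^i_\alpha$ is an iterated bracket of $i$ horizontal fields. Hence this term is bounded in $L^\theta$ by $C\eps^\ell\sum\|X^1_{\alpha_1}\cdots X^1_{\alpha_\ell}u\|_{L^\theta}$, which is exactly the right-hand side. So it does not need to vanish, only to be bounded.
\item Second, with that observation, use the cheaper expansion instead. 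Since $u(p\cdot\delta_\eps(w))-u(p)-\sum_{k<\ell}\eps^kT^k_pu(w)=\eps^\ell R^w_{\ell,\eps}u(p)+\eps^\ell T^\ell_pu(w)$, we get
\begin{equation*}
u_\eps(p)-u(p)=\eps^\ell\int_{\mathbb G}\left[R^w_{\ell,\eps}u(p)+T^\ell_pu(w)\right]\tilde\rho(w)\,dw .
\end{equation*}
\end{itemize}

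\textbf{Conclusion.} Apply Minkowski's integral inequality. \Cref{thm_difference_quotients_global} bounds $\|R^w_{\ell,\eps}u\|_{L^\theta}$ by $C_2d(w,0)^\ell\sum\|X^1_{\alpha_1}\cdots X^1_{\alpha_\ell}u\|_{L^\theta}$, using \eqref{eq:priori11_infinito_lavorocarnot_global} when $\theta=\infty$. The term $T^\ell_pu(w)$ is bounded as in the first step above. Since $\tilde\rho$ is supported where $d(w,0)<1$, integrating gives $C_3=C_3(\mathbb G,\ell)$. Finally, I would verify the algebraic claim that $X^{I_k}_{A_k}u$ with $|I_k|=\ell$ is controlled by the $\ell$-th horizontal derivatives. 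I would do this by expanding each $X^i_\alpha$ through \eqref{defcarnotgroup} as a linear combination of iterated commutators of elements of $V_1$.
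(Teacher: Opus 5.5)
Your proposal is correct and follows the paper's proof: the paper likewise writes $u_\eps(p)-u(p)=\eps^\ell\int_{\mathbb G}\bigl[R^{q^{-1}}_{\ell,\eps}u(p)+T^\ell_p u(q^{-1})\bigr]\rho(q)\,dq$ (your substitution via $\tilde\rho$ is the same step), uses the moment conditions only to remove the orders $k\le\ell-1$, and bounds rather than cancels the degree-$\ell$ Taylor term before concluding with Minkowski's inequality and \Cref{thm_difference_quotients_global}. Your expansion of each $X^{I_k}_{A_k}$ with $\abs*{I_k}=\ell$ into words of $\ell$ horizontal fields is exactly the ``simple estimate of $T^\ell_\cdot u$'' that the paper leaves implicit, and introducing $\tilde\rho$ correctly handles the fact that $\rho$ need not be symmetric here.
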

\begin{proof}
    Fix $p\in\mathbb G$. We have
    \begin{equation*}
        u_\eps(p)-u(p)\overset{\eqref{eq_dilation_Lebesgue},\eqref{eq_mollificatori_quant}}{=}\int_{\mathbb G}\left[u\left(p\cdot\delta_\eps\left(q^{-1}\right)\right)-u(p)\right]\rho(q)\,dq \overset{\eqref{eq_difference_quotients_notation},\eqref{eq_mollificatori_quant}}{=}\eps^\ell\int_{\mathbb G}\left[R^{q^{-1}}_{\ell,\eps}u(p)+T^\ell_p u\left(q^{-1}\right)\right]\rho(q)\,dq.
    \end{equation*}
    Therefore,
        \begin{equation*}
        \|u_\eps-u\|_{L^\theta(\mathbb G)}\leq\eps^\ell\int_{\mathbb G}\left(\left\|R^{q^{-1}}_{\ell,\eps}u\right\|_{L^\theta(\mathbb G)}+\left\|T^\ell_\cdot u\left(q^{-1}\right)\right\|_{L^\theta(\mathbb G)}\right)|\rho(q)|\,dq.
    \end{equation*}
    A simple estimate of $T^\ell_\cdot u$, combined with \eqref{eq:priori11_lavorocarnot_global} and \eqref{eq:priori11_infinito_lavorocarnot_global}, concludes the proof.
\end{proof}

\bibliographystyle{abbrv}
\bibliography{biblio}

\begin{thebibliography}{10}

\bibitem{MR2096794}
L.~Ambrosio.
\newblock Transport equation and {C}auchy problem for {$BV$} vector fields.
\newblock {\em Invent. Math.}, 158(2):227--260, 2004.

\bibitem{MR2409676}
L.~Ambrosio and G.~Crippa.
\newblock Existence, uniqueness, stability and differentiability properties of the flow associated to weakly differentiable vector fields.
\newblock In {\em Transport equations and multi-{D} hyperbolic conservation laws}, volume~5 of {\em Lect. Notes Unione Mat. Ital.}, pages 3--57. Springer, Berlin, 2008.

\bibitem{MR3283066}
L.~Ambrosio and G.~Crippa.
\newblock Continuity equations and {ODE} flows with non-smooth velocity.
\newblock {\em Proc. Roy. Soc. Edinburgh Sect. A}, 144(6):1191--1244, 2014.

\bibitem{MR2401600}
L.~Ambrosio, N.~Gigli, and G.~Savar\'e.
\newblock {\em Gradient flows in metric spaces and in the space of probability measures}.
\newblock Lectures in Mathematics ETH Z\"urich. Birkh\"auser Verlag, Basel, second edition, 2008.

\bibitem{dpl1}
L.~Ambrosio, G.~Somma, S.~Verzellesi, and D.~Vittone.
\newblock Renormalization of contact vector fields with horizontal {S}obolev regularity in {H}eisenberg groups.
\newblock Preprint, \url{https://doi.org/10.48550/arXiv.2602.00804}, 2026.

\bibitem{MR3265963}
L.~Ambrosio and D.~Trevisan.
\newblock Well-posedness of {L}agrangian flows and continuity equations in metric measure spaces.
\newblock {\em Anal. PDE}, 7(5):1179--1234, 2014.

\bibitem{MR4071413}
S.~Bianchini and P.~Bonicatto.
\newblock A uniqueness result for the decomposition of vector fields in {$\Bbb R^d$}.
\newblock {\em Invent. Math.}, 220(1):255--393, 2020.

\bibitem{BonLanUgu}
A.~Bonfiglioli, E.~Lanconelli, and F.~Uguzzoni.
\newblock {\em Stratified {L}ie groups and potential theory for their sub-{L}aplacians}.
\newblock Springer Monographs in Mathematics. Springer, Berlin, 2007.

\bibitem{MR4544986}
M.~Bramanti and L.~Brandolini.
\newblock {\em H\"ormander operators}.
\newblock World Scientific Publishing Co. Pte. Ltd., Hackensack, NJ, 2023.

\bibitem{MR4801826}
L.~Capogna, G.~Giovannardi, A.~Pinamonti, and S.~Verzellesi.
\newblock The asymptotic {$p$}-{P}oisson equation as {$p\to\infty$} in {C}arnot-{C}arath\'eodory spaces.
\newblock {\em Math. Ann.}, 390(2):2113--2153, 2024.

\bibitem{MR1411988}
I.~Capuzzo~Dolcetta and B.~Perthame.
\newblock On some analogy between different approaches to first order {PDE}'s with nonsmooth coefficients.
\newblock {\em Adv. Math. Sci. Appl.}, 6(2):689--703, 1996.

\bibitem{MR2369485}
G.~Crippa and C.~De~Lellis.
\newblock Estimates and regularity results for the {D}i{P}erna-{L}ions flow.
\newblock {\em J. Reine Angew. Math.}, 616:15--46, 2008.

\bibitem{AST_2008__317__175_0}
C.~De~Lellis.
\newblock Ordinary differential equations with rough coefficients and the renormalization theorem of {Ambrosio} [after {Ambrosio,} {DiPerna,} {Lions]}.
\newblock In {\em S\'eminaire Bourbaki - Volume 2006/2007 - Expos\'es 967-981}, number 317 in Ast\'erisque, pages 175--203. Soci\'et\'e math\'ematique de France, 2008.
\newblock talk:972.

\bibitem{DePauw_03}
N.~Depauw.
\newblock Non unicit\'e{} des solutions born\'ees pour un champ de vecteurs {BV} en dehors d'un hyperplan.
\newblock {\em C. R. Math. Acad. Sci. Paris}, 337(4):249--252, 2003.

\bibitem{MR1022305}
R.~J. DiPerna and P.-L. Lions.
\newblock Ordinary differential equations, transport theory and {S}obolev spaces.
\newblock {\em Invent. Math.}, 98(3):511--547, 1989.

\bibitem{MR494315}
G.~B. Folland.
\newblock Subelliptic estimates and function spaces on nilpotent {L}ie groups.
\newblock {\em Ark. Mat.}, 13(2):161--207, 1975.

\bibitem{MR0657581}
G.~B. Folland and E.~M. Stein.
\newblock {\em Hardy spaces on homogeneous groups}, volume~28 of {\em Mathematical Notes}.
\newblock Princeton University Press, Princeton, NJ; University of Tokyo Press, Tokyo, 1982.

\bibitem{MR1404326}
N.~Garofalo and D.-M. Nhieu.
\newblock Isoperimetric and {S}obolev inequalities for {C}arnot-{C}arath\'{e}odory spaces and the existence of minimal surfaces.
\newblock {\em Comm. Pure Appl. Math.}, 49(10):1081--1144, 1996.

\bibitem{MR1631642}
N.~Garofalo and D.-M. Nhieu.
\newblock Lipschitz continuity, global smooth approximations and extension theorems for {S}obolev functions in {C}arnot-{C}arath\'{e}odory spaces.
\newblock {\em J. Anal. Math.}, 74:67--97, 1998.

\bibitem{MR2737390}
P.-E. Jabin.
\newblock Differential equations with singular fields.
\newblock {\em J. Math. Pures Appl. (9)}, 94(6):597--621, 2010.

\bibitem{MR1920389}
A.~W. Knapp.
\newblock {\em Lie groups beyond an introduction}, volume 140 of {\em Progress in Mathematics}.
\newblock Birkh\"auser Boston, Inc., Boston, MA, second edition, 2002.

\bibitem{MR788413}
A.~Kor\'anyi and H.~M. Reimann.
\newblock Quasiconformal mappings on the {H}eisenberg group.
\newblock {\em Invent. Math.}, 80(2):309--338, 1985.

\bibitem{MR1317384}
A.~Kor\'anyi and H.~M. Reimann.
\newblock Foundations for the theory of quasiconformal mappings on the {H}eisenberg group.
\newblock {\em Adv. Math.}, 111(1):1--87, 1995.

\bibitem{MR2044334}
C.~Le~Bris and P.-L. Lions.
\newblock Renormalized solutions of some transport equations with partially {$W^{1,1}$} velocities and applications.
\newblock {\em Ann. Mat. Pura Appl. (4)}, 183(1):97--130, 2004.

\bibitem{MR2954043}
J.~M. Lee.
\newblock {\em Introduction to smooth manifolds}, volume 218 of {\em Graduate Texts in Mathematics}.
\newblock Springer, New York, second edition, 2013.

\bibitem{MR3726909}
G.~Leoni.
\newblock {\em A first course in {S}obolev spaces}, volume 181 of {\em Graduate Studies in Mathematics}.
\newblock American Mathematical Society, Providence, RI, second edition, 2017.

\bibitem{MR2124585}
N.~Lerner.
\newblock Transport equations with partially {$BV$} velocities.
\newblock {\em Ann. Sc. Norm. Super. Pisa Cl. Sci. (5)}, 3(4):681--703, 2004.

\bibitem{Libermann}
P.~Libermann.
\newblock Sur les automorphismes infinit\'esimaux des structures symplectiques et des structures de contact.
\newblock In {\em Colloque {G}\'eom. {D}iff. {G}lobale ({B}ruxelles, 1958)}, pages 37--59. Librairie Universitaire, Louvain, 1959.

\bibitem{MR4242824}
Q.-H. Nguyen.
\newblock Quantitative estimates for regular {L}agrangian flows with {$BV$} vector fields.
\newblock {\em Comm. Pure Appl. Math.}, 74(6):1129--1192, 2021.

\bibitem{MR2395129}
A.~Ottazzi.
\newblock A sufficient condition for nonrigidity of {C}arnot groups.
\newblock {\em Math. Z.}, 259(3):617--629, 2008.

\bibitem{MR2917692}
A.~Ottazzi and B.~Warhurst.
\newblock Contact and 1-quasiconformal maps on {C}arnot groups.
\newblock {\em J. Lie Theory}, 21(4):787--811, 2011.

\bibitem{MR4581339}
A.~Pinamonti, S.~Verzellesi, and C.~Wang.
\newblock The {A}ronsson equation for absolute minimizers of supremal functionals in {C}arnot-{C}arath\'{e}odory spaces.
\newblock {\em Bull. Lond. Math. Soc.}, 55(2):998--1018, 2023.

\bibitem{MR3587666}
F.~Serra~Cassano.
\newblock Some topics of geometric measure theory in {C}arnot groups.
\newblock In {\em Geometry, analysis and dynamics on sub-{R}iemannian manifolds. {V}ol. 1}, EMS Ser. Lect. Math., pages 1--121. Eur. Math. Soc., Z\"{u}rich, 2016.

\bibitem{MR266258}
N.~Tanaka.
\newblock On differential systems, graded {L}ie algebras and pseudogroups.
\newblock {\em J. Math. Kyoto Univ.}, 10:1--82, 1970.

\bibitem{MR746308}
V.~S. Varadarajan.
\newblock {\em Lie groups, {L}ie algebras, and their representations}, volume 102 of {\em Graduate Texts in Mathematics}.
\newblock Springer-Verlag, New York, 1984.
\newblock Reprint of the 1974 edition.

\bibitem{MR4986764}
S.~Verzellesi.
\newblock Variational properties of local functionals driven by arbitrary anisotropies.
\newblock {\em Calc. Var. Partial Differential Equations}, 65(1):Paper No. 3, 24, 2026.

\bibitem{MR2016308}
B.~Warhurst.
\newblock Contact and quasiconformal mappings on real model filiform groups.
\newblock {\em Bull. Austral. Math. Soc.}, 68(2):329--343, 2003.

\end{thebibliography}

\end{document}